\definecolor{plum}{HTML}{92268F}
\definecolor{fgreen}{HTML}{009B55}
\newlength\smallfigureheight
\newlength\smallfigurewidth
\pgfplotsset{
	scale only axis,
	width=\smallfigurewidth,
	height=\smallfigureheight,
	every axis/.append style={
		xticklabel style={font=\footnotesize},
		xlabel style={font=\footnotesize},
		yticklabel style={font=\footnotesize},
		ylabel style={font=\footnotesize},
		title style={font=\footnotesize},
		legend image post style={scale=0.5},
		legend style={font=\footnotesize}
	}
}
\newcommand{\logLogHalfSlopeTriangle}[2]{
	\pgfplotsextra{
		\pgfkeys{/pgf/fpu}
		\pgfmathsetmacro{\xC}{#1}
		\pgfmathsetmacro{\yC}{#2}
		\pgfmathsetmacro{\scale}{5}  	
		\pgfmathsetmacro{\xA}{\xC*\scale}
		\pgfmathsetmacro{\yA}{\yC*sqrt(\scale)}
		\pgfmathsetmacro{\xB}{\xA}
		\pgfmathsetmacro{\yB}{\yC}       
		\pgfkeys{/pgf/fpu=false}
		\coordinate (A) at (\xA,\yA);
		\coordinate (B) at (\xB,\yB);
		\coordinate (C) at (\xC,\yC);
		\draw   (A) -- (B) node[midway,right]{{\scriptsize $\tfrac{1}{2}$}} -- (C)  node[midway,below]{{\scriptsize $1$}}--  cycle;
}}
\newcommand{\IlogLogSlopeTriangle}[2]{
	\pgfplotsextra{
		\pgfkeys{/pgf/fpu}
		\pgfmathsetmacro{\xC}{#1}
		\pgfmathsetmacro{\yC}{#2}
		\pgfmathsetmacro{\scale}{3}  	
		\pgfmathsetmacro{\xA}{\xC*\scale}
		\pgfmathsetmacro{\yA}{\yC*\scale}
		\pgfmathsetmacro{\xB}{\xA}
		\pgfmathsetmacro{\yB}{\yC}       
		\pgfkeys{/pgf/fpu=false}
		\coordinate (A) at (\xA,\yA);
		\coordinate (B) at (\xB,\yB);
		\coordinate (C) at (\xC,\yC);
		\draw   (A) -- (B) node[midway,right]{{\scriptsize $1$}} -- (C)  node[midway,below]{{\scriptsize $1$}}--  cycle;
}}
\newcommand{\IbiglogLogSlopeTriangle}[2]{
	\pgfplotsextra{
		\pgfkeys{/pgf/fpu}
		\pgfmathsetmacro{\xC}{#1}
		\pgfmathsetmacro{\yC}{#2}
		\pgfmathsetmacro{\scale}{5}  	
		\pgfmathsetmacro{\xA}{\xC*\scale}
		\pgfmathsetmacro{\yA}{\yC*\scale}
		\pgfmathsetmacro{\xB}{\xA}
		\pgfmathsetmacro{\yB}{\yC}       
		\pgfkeys{/pgf/fpu=false}
		\coordinate (A) at (\xA,\yA);
		\coordinate (B) at (\xB,\yB);
		\coordinate (C) at (\xC,\yC);
		\draw   (A) -- (B) node[midway,right]{{\scriptsize $1$}} -- (C)  node[midway,below]{{\scriptsize $1$}}--  cycle;
}}
\newcommand{\logLogTSlopeTriangle}[2]{
	\pgfplotsextra{
		\pgfkeys{/pgf/fpu}
		\pgfmathsetmacro{\xC}{#1}
		\pgfmathsetmacro{\yC}{#2}
		\pgfmathsetmacro{\scale}{3}  	
		\pgfmathsetmacro{\xA}{\xC*\scale}
		\pgfmathsetmacro{\yA}{\yC*\scale^2}
		\pgfmathsetmacro{\xB}{\xA}
		\pgfmathsetmacro{\yB}{\yC}       
		\pgfkeys{/pgf/fpu=false}
		\coordinate (A) at (\xA,\yA);
		\coordinate (B) at (\xB,\yB);
		\coordinate (C) at (\xC,\yC);
		\draw   (A) -- (B) node[midway,right]{{\scriptsize $2$}} -- (C)  node[midway,below]{{\scriptsize $1$}}--  cycle;
}}
\newcommand{\biglogLogTSlopeTriangle}[2]{
	\pgfplotsextra{
		\pgfkeys{/pgf/fpu}
		\pgfmathsetmacro{\xC}{#1}
		\pgfmathsetmacro{\yC}{#2}
		\pgfmathsetmacro{\scale}{5}  	
		\pgfmathsetmacro{\xA}{\xC*\scale}
		\pgfmathsetmacro{\yA}{\yC*\scale^2}
		\pgfmathsetmacro{\xB}{\xA}
		\pgfmathsetmacro{\yB}{\yC}       
		\pgfkeys{/pgf/fpu=false}
		\coordinate (A) at (\xA,\yA);
		\coordinate (B) at (\xB,\yB);
		\coordinate (C) at (\xC,\yC);
		\draw   (A) -- (B) node[midway,right]{{\scriptsize $2$}} -- (C)  node[midway,below]{{\scriptsize $1$}}--  cycle;
}}
\newtheorem{definition}{Definition}
\newtheorem{theorem}{Theorem}
\newtheorem{corollary}{Corollary}
\newtheorem{remark}{Remark}
\newtheorem{lemma}{Lemma}
\newtheorem{eg}{Example}
\newcommand{\conv}{\overline{\mathrm{conv}}\,}
\newcommand{\abs}[1]{\lvert #1 \rvert}
\title{Analysis and Numerical Approximation of Stationary Second-Order Mean Field Game Partial Differential Inclusions\footnote{University College London, Department of Mathematics, 25 Gordon Street, London, WC1H 0AY, UK; yohance.osborne.16@ucl.ac.uk, i.smears@ucl.ac.uk}}
\author{Yohance A. P. Osborne and 
Iain Smears}
\pgfplotsset{compat=newest}
\begin{document}
	
\maketitle
\pagenumbering{arabic}

\begin{abstract}
The formulation of Mean Field Games (MFG) typically requires continuous differentiability of the Hamiltonian in order to determine the advective term in the Kolmogorov--Fokker--Planck equation for the density of players. However, in many cases of practical interest, the underlying optimal control problem may exhibit bang-bang controls, which typically lead to nondifferentiable Hamiltonians.
We develop the analysis and numerical analysis of stationary MFG for the general case of convex, Lipschitz, but possibly nondifferentiable Hamiltonians.
In particular, we propose a generalization of the MFG system as a Partial Differential Inclusion (PDI) based on interpreting the derivative of the Hamiltonian in terms of subdifferentials of convex functions.
We establish existence of a weak solution to the MFG PDI system, and we further prove uniqueness under a similar monotonicity condition to the one considered by Lasry and Lions. 
We then propose a monotone finite element discretization of the problem, and we prove strong $H^1$-norm convergence of the approximations to the value function and strong $L^q$-norm convergence of the approximations of the density function. We illustrate the performance of the numerical method in numerical experiments featuring nonsmooth solutions.
\end{abstract}
\medskip

\begin{small} 
  \textbf{Key words:} Mean Field Games, Hamilton--Jacobi--Bellman equations, Nondifferentiable Hamiltonians, Partial Differential Inclusions, Monotone finite element method, Convergence analysis
  \medskip
  
  \textbf{AMS subject classifications:} 65N15, 65N30
\end{small}

\section{Introduction}

Mean Field Games (MFG), as introduced by Lasry \& Lions \cite{lasry2006jeux1,lasry2006jeux,lasry2007mean} and independently by Huang, Caines \& Malham\'e~\cite{huang2006large}, consider the asymptotic behaviour of rational stochastic differential games as the number of players approaches infinity. 
Under suitable assumptions, the system of equations consists of a Hamilton--Jacobi--Bellman (HJB) equation for the value function associated to the underlying stochastic optimal control problem faced by the players, coupled with a Kolmogorov--Fokker--Planck (KFP) equation for the density of players within the state space of the game.
MFG systems find applications in a broad range of areas, such as economics, population dynamics, and mass transport~\cite{AchdouEtAl2014,GueantLasryLions2003,GomesSaude2014}.
We refer the reader to the surveys in~\cite{achdou2020mean,GomesSaude2014,gomes2016regularity} for extensive reviews of the literature on the theory and applications for a variety of MFG problems.

The numerical solution of MFG systems is an active area of research and has led to various approaches.
Monotone finite difference methods on Cartesian grids are considered in~\cite{achdou2010mean,achdou2013mean,achdou2016convergence}.
In particular, under the assumption that the continuous problem admits a unique classical solution, \cite{achdou2013mean} shows the convergence of the approximation of the value function in some first-order Sobolev space for the stationary case, and in some Bochner--Sobolev space for the time-dependent case, along with convergence of the approximations of the density function in some Lebesgue spaces.
The assumption of the existence of a classical solution was then removed in~\cite{achdou2016convergence}, which showed convergence of the approximations to a weak solution of the system.
There is also an alternative approach to the solution of the problem when the couplings of the system are local. In this case, the MFG system can at least formally be related to the first-order optimality conditions of convex optimization problems, which leads to other methods based on optimization, see for example \cite{andreev2017preconditioning,AriasKaliseSilva2018}.
Fully discrete semi-Lagrangian schemes have also been proposed in~\cite{CarliniSilva14,CarliniSilve15} for first- and second-order MFG systems.


We now outline the motivation for the present paper. Recall that MFG PDE systems are derived from models of large numbers of players solving stochastic optimal control problems. It is well-known from stochastic optimal control that, in many applications of practical interest, the underlying controls may be of \emph{bang-bang} type, which typically lead to discontinuities in the optimal control policies and the possibility of nonunique optimal controls in some regions of the state space.
In turn, this generally leads to nondifferentiable Hamiltonians, which pose special challenges for the analysis and numerical analysis of MFG systems.

To illustrate these challenges, we consider as a model problem a stationary MFG system of the form
\begin{subequations}\label{sys1}
	\begin{align}
		- \nu \Delta u+H(x,\nabla u) + \kappa u& =F[m]  &\text{ in }\Omega,& \label{originalform1}
		\\ 
		- \nu\Delta m-\text{div}\left(m\frac{\partial H}{\partial p}(x,\nabla u)\right)+\kappa m & =G(x) &\text{ in }\Omega,&\label{originalform2} 
	\end{align}
\end{subequations}
along with homogeneous Dirichlet boundary conditions $u=0$ and  $m=0$ on $\partial \Omega$.
The unknowns $u$ and $m$ denote, respectively, the value function and the density function for the player distribution of the game.
Here, the domain $\Omega\subset\mathbb{R}^n$ is a bounded connected open set in $\mathbb{R}^n$,  $n\geq 2$, and $\nu>0$ and $\kappa\geq 0$ are constants. 
Precise assumptions on the data $H$, $F$, and $G$ are given below in Section~\ref{Sec2}.
The system \eqref{sys1} includes as special cases the stationary MFG model considered in \cite{lasry2006jeux1} (in which case $\kappa$ and $G$ vanish) and some models of discounted MFG~\cite{ferreira2019existence}. 
However, note that in contrast to the periodic boundary conditions considered in~\cite{lasry2006jeux1}, we consider \eqref{sys1} along with Dirichlet boundary conditions, which arise in models where players may enter or exit the game, and thus $m$ is not a probability density function in general.
This explains why there is no Lagrange multiplier term in the first equation~\eqref{originalform1}.
The source term $G$ and the term involving $\kappa$ in~\eqref{sys1} are also relevant in the context of temporal semi-discretizations of time-dependent MFG systems.

The Hamiltonian in~\eqref{sys1} is given in terms of components of the underlying stochastic optimal control problem; we therefore consider Hamiltonians $H$ of the form
\begin{equation}\label{1''}
	H(x,p):=\sup_{\alpha\in\mathcal{A}}\left(b(x,\alpha)\cdot p-f(x,\alpha)\right)\quad \forall (x,p)\in\overline{\Omega}\times\mathbb{R}^n,
\end{equation}
where $\mathcal{A}$ denotes the set of controls, and where $b $ is the opposite of the controlled drift and $f$ is a control-dependent running cost component set by the underlying stochastic optimal control problem.
For simplicity, we assume that $\mathcal{A}$ is a compact metric space, and  $b: \overline{\Omega}\times\mathcal{A}\to\mathbb{R}^n$, $f:\overline{\Omega}\times\mathcal{A}\to\mathbb{R}$ are uniformly continuous, so that the supremum in~\eqref{1''} is achieved.
In many applications, the controls that achieve the supremum~\eqref{1''} may be nonunique for some $(x,p)\in \Omega\times \mathbb{R}^n$, which often leads to discontinuous optimal controls of \emph{bang-bang} type. In these cases, the Hamiltonian $H$ is then typically Lipschitz continuous but not differentiable everywhere.
However, most works so far on MFG require differentiable or even $C^1$ Hamiltonians, which can be quite restrictive in practice.

Nondifferentiable Hamiltonians pose an immediate and obvious challenge for analysis since the advective term in~\eqref{originalform2} is then no longer well-defined in a classical sense.
This leads to the problem of finding a suitable relaxed meaning for the equation in these situations. From a modelling perspective, this corresponds to the question of how the players of the game choose among the optimal controls when they are not unique.
To the best of our knowledge, the analysis of MFG with nondifferentiable Hamiltonians seems to only have been considered in~\cite{ducasse2020second} for the special case of Hamiltonians of the form $\overline{H}(m,p)=\overline{\kappa}(m)\lvert p \rvert$, for some given function $\overline{\kappa}$; see Remark~\ref{rem:ducasse} for further comments. Specific examples are presented in~\cite{BardiFischer19} showing how uniqueness of solutions may fail for nondifferentiable Hamiltonians and nonmonotone couplings; in these examples, the advective term is unambiguous since the gradients of the value functions avoid the points of nondifferentiability of the Hamiltonian.
Otherwise, the analysis and numerical analysis of MFG problems with nondifferentiable Hamiltonians remains largely untouched.

Our first main contribution in this work is to provide a suitable generalized meaning for the system~\eqref{sys1} when $H$ is nondifferentiable, and to prove results on the existence and uniqueness of solutions under conditions where they are expected to hold. 
Using the fact that the Hamiltonian $H$ is convex with respect to its second argument, our approach is based on relaxing \eqref{originalform2} as the following \emph{Partial Differential Inclusion}~(PDI):
\begin{equation}\label{FKgood}
	-\nu\Delta m +\kappa m- G(x)\in \text{div}\left(m\partial_p H(x,\nabla u)\right)\text{ }\text{ in }\Omega,
\end{equation} 
where $\partial_p H$ denotes the Moreau--Rockafellar pointwise partial subdifferential of $H$ with respect to $p$, and where the inclusion is understood in a suitable weak sense.
The resulting MFG PDI is then
\begin{equation}\label{sys}
	\begin{aligned}
		- \nu\Delta u+H(x,\nabla u)+\kappa u&=F[m]   &&\text{ in }\Omega,
		\\
		-\nu\Delta m +\kappa m- G(x)&\in \text{div}\left(m\partial_p H(x,\nabla u)\right) &&\text{ in }\Omega,
		\\
		u=0,\qquad m&=0, & &\text{ on }\partial\Omega. 
	\end{aligned}
\end{equation}
We first prove existence of weak solutions of~\eqref{sys} for rather general problem data. 
Then, crucially, we show uniqueness of solutions for~\eqref{sys} for monotone couplings following the strategy of Lasry \&~Lions~\cite{lasry2006jeux1,lasry2007mean}, thus extending important uniqueness results to the case of nondifferentiable Hamiltonians.
Our approach is also significant in terms of the mathematical modelling, since it does not require additional modelling assumptions on how the players choose among the optimal controls when they are not unique; see Remark \ref{rem:drift_meaning} below for more specific comments.

Our second main contribution is to propose and study a monotone Finite Element Method (FEM) for approximating weak solutions to the MFG PDI \eqref{sys}.
In this context, monotonicity of the FEM refers to the presence of a discrete maximum principle. There is a wide range of approaches to constructing monotone FEMs; see for instance \cite{ciarlet1973maximum,baba1981conservation,MIZUKAMI1985181,xu1999monotone,burman2002nonlinear}. The discretization considered here is based on the one from~\cite{JensenSmears2013} for degenerate fully nonlinear HJB equations, where convergence to the unique viscosity solution was shown; see also~\cite{Jensen2017,JaroszkowskiJensen2022}. To keep the analysis as simple as possible, we concentrate on a monotone FEM where the discrete maximum principle is achieved via artificial diffusion on strictly acute meshes.

The main result on the analysis of the  numerical approximations is given in Theorem \ref{convergence}, which shows convergence of the numerical approximations in the small mesh limit for uniquely solvable MFG PDI systems.
In particular, we prove strong convergence in the $H^1$-norm of the approximations to the value function $u$.
We also show that the approximations of the density function $m$ converge strongly in $L^q$-norms for $q\in [1,2^*)$, $2^*=\frac{2n}{n-2}$, as well as weak convergence in $H^1$.
For general nondifferentiable Hamiltonians, a proof of strong convergence in $H^1$ of the density approximations is not currently available owing to a lack of continuity in the advective terms of the KFP equation. However, if some additional continuity is assumed (which holds for instance when the Hamiltonian is $C^1$), then the density approximations also converge strongly in $H^1$, see Corollary \ref{convergence-cor}.
We then complement the convergence analysis with two numerical experiments that illustrate the performance of the method.

This paper is organized as follows. We outline the notation in Section~\ref{Sec2}, and we formulate the notion of weak solution for the MFG PDI \eqref{sys} and state the main results on the continuous problem in Section~\ref{Sec3}.
The main results on the analysis of the continuous problem are then proved in Section~\ref{Sec 4}.
In Section~\ref{Sec5} we introduce a monotone finite element scheme along with main results on the well-posedness of the method and its convergence.
This is followed by the proofs of these results in Section~\ref{Sec6}.
Section~\ref{Sec 7} presents the results of some numerical experiments.

\section{Notation}\label{Sec2}

We denote $\mathbb{N}\coloneqq \{1,2,3,\cdots\}$, and let $n\in \mathbb{N}$, $n\geq 2$.
For a Lebesgue measurable set $\omega \subset \mathbb{R}^n$, let $\lVert \cdot \rVert_\omega$ denote the standard $L^2$-norm for scalar- and vector-valued functions on $\omega$.
Let $\Omega$ be a bounded, open connected subset of $\mathbb{R}^n$ with Lipschitz boundary $\partial \Omega$.
The $n$-dimensional open ball of radius $r$ and centre $x_0\in\mathbb{R}^n$ is denoted by $B_r(x_0)$.
For a set $\mathcal{C}\subset \mathbb{R}^n$, we denote its closed convex hull by $\conv\mathcal{C}$.

We make the following assumptions on the data appearing in \eqref{sys}.
Let $\nu>0$ and $\kappa\geq 0$ be constants, and let $G\in H^{-1}(\Omega)$. We will say that $G\in H^{-1}(\Omega)$ is nonnegative in the sense of distributions if 
$\langle G, \phi\rangle_{H^{-1}\times H_0^1} \geq 0$ for all functions $\phi\in H_0^1(\Omega)$ that are nonnegative a.e.\ in $\Omega$. Next, let $F:L^2(\Omega)\to H^{-1}(\Omega)$ be a possibly nonlocal operator that satisfies 
\begin{subequations}
	\begin{align} 
		\|F[z]\|_{H^{-1}(\Omega)}&\leq c_1\left(\|z\|_{\Omega}+1\right)&&\forall z\in L^2(\Omega)\label{F1},
		\\ \|F[m_1]-F[m_2]\|_{H^{-1}(\Omega)}&\leq c_2\|m_1-m_2\|_{\Omega}&&\forall m_1,m_2\in  L^2(\Omega)\label{F2}  
	\end{align}
\end{subequations}
where $c_1,c_2\geq 0$ are constants. We will say that $F$ is strictly monotone if 
\begin{equation}\label{eq:F_monotone}
	\langle F[m_1]-F[m_2],m_1-m_2\rangle_{H^{-1}\times H_0^1} \leq 0\Longrightarrow m_1=m_2
\end{equation}
whenever $m_1,m_2\in H_0^1(\Omega)$. 
Note that although the domain of $F$ is $L^2(\Omega)$, the monotonicity condition~\eqref{eq:F_monotone} is needed only for arguments in the smaller space $H^1_0(\Omega)$.

\begin{eg}
	\emph{The conditions \eqref{F1}, \eqref{F2} and \eqref{eq:F_monotone} are satisfied by a broad class of operators. For example, this class includes local operators $F:L^2(\Omega)\to L^2(\Omega)$ of the form $F[z](x)\coloneqq f(z(x))$, $x\in\Omega$, $z\in L^2(\Omega)$, where the function $f:\mathbb{R}\to\mathbb{R}$ is strictly monotone and Lipschitz continuous.
	This class also includes some nonlocal operators, such as $F\coloneqq (-\Delta)^{-1}: L^2(\Omega)\to H_0^1(\Omega)$ where $(-\Delta)^{-1}$ denotes the inverse Laplacian with a homogeneous Dirichlet boundary condition.
	In this case $F$ is strongly monotone with respect to the $H^{-1}(\Omega)$-norm, and is thus strictly monotone in $L^2(\Omega)$.
	The conditions above also allow some operators of differential-type.
	For instance, we can have $F:L^2(\Omega)\to H^{-1}(\Omega)$ defined by $\langle F[z],\phi \rangle_{H^{-1}\times H_0^1}\coloneqq -\int_{\Omega}z\bm{v}{\cdot}\nabla \phi\,\mathrm{d}x$ for all $z\in L^2(\Omega)$ and $\phi\in H_0^1(\Omega)$, where $\bm{v}\in C^{1}(\overline{\Omega};\mathbb{R}^n)$ is a vector field that satisfies $\nabla{\cdot}\bm{v}>0$ in $\Omega$.
	In this case $\langle F[m_1]-F[m_2],m_1-m_2\rangle_{H^{-1}\times H_0^1}=\frac{1}{2}\int_\Omega (\nabla{\cdot}\bm{v})(m_1-m_2)^2\mathrm{d}x $ for all $m_1$ and $m_2$ in $H^1_0(\Omega)$, so $F$ is strictly monotone on $H^1_0(\Omega)$.
	This is an example where it is helpful to require monotonicity only on the smaller space $H^1_0(\Omega)$.}
\end{eg}

Recall that the Hamiltonian $H\colon \overline{\Omega}\times \mathbb{R}^n \rightarrow \mathbb{R}$ is defined by~\eqref{1''}, and that $b$ and $f$ are uniformly continuous on $\overline{\Omega}\times \mathcal{A}$ with $\mathcal{A}$ a compact metric space.
The Hamiltonian $H$ then satisfies the following bounds 
\begin{subequations}\label{bounds}
	\begin{align} 
		|H(x,p)|&\leq c_3\left(|p|+1\right)&&\forall(x,p)\in \overline{\Omega}\times\mathbb{R}^n, \label{bounds:growth}\\ 
		|H(x,p)-H(x,q)|&\leq c_4|p-q|&&\forall(x,p,q)\in\overline{\Omega}\times\mathbb{R}^n\times\mathbb{R}^n,\label{bounds:lipschitz}
	\end{align} 
\end{subequations}
with
$c_3\coloneqq \max\left\{\|b\|_{C(\overline{\Omega}\times\mathcal{A};\mathbb{R}^n)},\|f\|_{C(\overline{\Omega}\times\mathcal{A})}\right\}$ and $ c_4\coloneqq\|b\|_{C(\overline{\Omega}\times\mathcal{A};\mathbb{R}^n)}.$ 
It is then clear that the mapping $v\mapsto H(\cdot,\nabla v)$ is Lipschitz continuous from $H^1(\Omega)$ into $L^2(\Omega)$. 

Given arbitrary sets $A$ and $B$, an operator $\mathcal{M}$ that maps each point $x\in A$ to a \emph{subset} of $B$ is called a \emph{set-valued map from $A$ to $B$}, and we write $\mathcal{M}:A\rightrightarrows B$. For the Hamiltonian given by \eqref{1''} its pointwise Moreau--Rockafellar partial subdifferential with respect to $p$ is the set-valued map $ \partial_p H\colon\Omega\times\mathbb{R}^n\rightrightarrows\mathbb{R}^n$ defined by 
\begin{equation}\label{subdifferential}
	\partial_p H(x,p)\coloneqq\left\{\tilde{b}\in\mathbb{R}^n:H(x,q)\geq H(x,p)+\tilde{b}\cdot(q-p)\quad\forall q\in\mathbb{R}^n\right\}.
\end{equation}
Note that $\partial_p H(x,p)$ is nonempty for all $x\in\Omega$ and $p\in\mathbb{R}^n$ because $H$ is real-valued and convex in $p$ for each fixed $x\in\Omega$. 
Note also that for the special case of a differentiable convex function, the (partial) subdifferential at a point is simply the singleton set containing the value of the (partial) derivative at the point.
Furthermore, the subdifferential $\partial_p H$ is uniformly bounded since~\eqref{bounds:lipschitz} implies that for all $(x,p)\in\Omega\times \mathbb{R}^n$, the set $\partial_p H(x,p)$ is contained in the closed ball of radius $c_4=\lVert b \rVert_{C(\overline{\Omega}\times\mathcal{A};\mathbb{R}^n)}$ centred at the origin.

Given a function $v\in W^{1,1}(\Omega)$, we say that a real-valued vector field $\tilde{b}:\Omega\to\mathbb{R}^n$ is a \emph{measurable selection of $\partial_pH(\cdot,\nabla v)$} if $\tilde{b}$ is Lebesgue measurable and $\tilde{b}(x)\in \partial_p H(x,\nabla v(x))$ for a.e.\ $x\in \Omega.$
The uniform boundedness of the subdifferential sets implies that any measurable selection $\tilde{b}$ of $\partial_p H(\cdot,\nabla v)$ must belong to $L^\infty(\Omega;\mathbb{R}^n)$. 
Thus, the correspondence between a function $v\in W^{1,1}(\Omega)$ and the set of all measurable selections of $\partial_p H(\cdot,\nabla v)$ defines a set-valued map between $W^{1,1}(\Omega)$ and $L^\infty(\Omega;\mathbb{R}^n)$.

\begin{definition}\label{Def1}
	Let $H$ be the function given by \eqref{1''}. We define the set-valued map $D_pH\colon W^{1,1}(\Omega)\rightrightarrows L^{\infty}(\Omega;\mathbb{R}^n)$ by
	$$D_pH[v]\coloneqq \left\{\tilde{b}\in L^{\infty}(\Omega;\mathbb{R}^n):\tilde{b}(x)\in\partial_pH(x,\nabla v(x)) \text{ \emph{for a.e.}\ }x\in\Omega\right\}.$$
\end{definition}
We show in Lemma~\ref{Prop1} below that $D_pH[v]$ is nonempty for all $v$ in $ W^{1,1}(\Omega)$. 

\section{Continuous Problem and Main Results}\label{Sec3}
\subsection{Problem Statement}
We now introduce the notion of weak solution for the MFG PDI \eqref{sys}. 
\begin{definition}[Weak Solution of \eqref{sys}]\label{weakdef}
	We say that a pair $(u,m)\in H_0^1(\Omega)\times H_0^1(\Omega)$ is a weak solution of \eqref{sys} if there exists a vector field 
	$\tilde{b}_*\in D_pH[u]$ such that, for all $\psi,\phi\in H_0^1(\Omega)$, there hold
	\begin{subequations}\label{weakform}
		\begin{align}
			\int_{\Omega}\nu\nabla u\cdot\nabla \psi+H(x,\nabla u)\psi + \kappa u\psi\,\mathrm{d}x &= \langle F[m],\psi\rangle_{H^{-1}\times H_0^1}, \label{weakform1}\\ 
			\int_{\Omega}\nu\nabla m\cdot\nabla \phi+m\tilde{b}_*\cdot\nabla \phi+\kappa m\phi \,\mathrm{d}x &=\langle G,\phi\rangle_{H^{-1}\times H_0^1}.\label{weakform2} 
		\end{align} 
	\end{subequations}
\end{definition}

The weak formulation of the problem given in Definition \ref{weakdef} can be reformulated in terms of a PDI.
In particular, recalling the definition of the set-valued map $D_pH$ in Definition \ref{Def1} above, for given $m,u\in H_0^1(\Omega)$, let
\begin{multline}\label{H-1set}
	\mathrm{div}\left(mD_pH[u]\right)\coloneqq
	\\
	\left\{g\in H^{-1}(\Omega):\exists\tilde{b}\in D_pH[u]\text{ s.t.\ }\langle g,\phi\rangle_{H^{-1}\times H_0^1}=-\int_{\Omega}m\tilde{b}{\cdot}\nabla \phi\,\mathrm{d}x\;\forall \phi\in H_0^1(\Omega)\right\}.
\end{multline}
In other words, the set $\mathrm{div}\left(m D_pH[u]\right)$ is the set of all distributions in $H^{-1}(\Omega)$ of the form $\mathrm{div}(m \tilde{b})$ where $\tilde{b}\in D_pH[u]$.
Then, the definition of a weak solution in Definition \ref{weakdef} is equivalent to requiring that $(u,m)\in H_0^1(\Omega)\times H_0^1(\Omega)$ solves the following pair of conditions which hold in the sense of distributions in $H^{-1}(\Omega)$:
\begin{subequations}\label{eq:pdi_distributional}
	\begin{align}
		&-\nu\Delta u + H(x,\nabla u)+\kappa u = F[m] 
		,
		\\
		&-\nu\Delta m +\kappa m - G(x) \in \text{div}\left(mD_pH[u]\right).
	\end{align}
\end{subequations}
Therefore, the PDI~system~\eqref{eq:pdi_distributional} is the weak formulation of~\eqref{sys}.

\begin{remark}\label{rem:ducasse}
	\emph{In~\cite[Definition~3.1]{ducasse2020second}, Ducasse \emph{et al.} propose a definition of weak solutions for problems with Hamiltonians of the form $\overline{H}(\overline{m},p)\coloneqq \overline{\kappa} (\overline{m})|p|$. In particular, their definition for a weak solution $(\overline{u},\overline{m})$ involves an advective velocity term $V$ in the KFP equation replaced by a possibly nonunique vector field $V$ that satisfies the conditions (in the present notation)
	\begin{equation}\label{duc1}
		\lVert V\rVert_{L^{\infty}(\Omega;\mathbb{R}^n)}\leq \overline{\kappa}(\overline{m}),\quad V(x)\cdot \nabla \overline{u}(x)= \overline{\kappa}(\overline{m}) |\nabla \overline{u}(x)|\text{ for a.e.\ }x\in\Omega.
	\end{equation}	
	Although it is not stated therein, it is straightforward to check that the conditions in~\eqref{duc1} are equivalent to requiring that $V$ belongs to the partial subdifferential $\partial_p \overline{H}(\overline{m},\nabla \overline{u})$. Thus, modulo the dependence of the Hamiltonian on the density of players, our approach significantly generalizes that of~\cite{ducasse2020second} to more general nondifferentiable Hamiltonians.}
\end{remark}


\subsection{Main Results}
The first main result for the continuous problem \eqref{weakform} is the following.
\begin{theorem}[Existence of Weak Solutions]\label{thm1-existence}
	There exists a pair $(u,m)\in H_0^1(\Omega)\times H_0^1(\Omega)$ that is a weak solution of \eqref{sys} in the sense of Definition \ref{weakdef} satisfying
	\begin{gather}
		\|m\|_{H^1(\Omega)}\leq C^*\|G\|_{H^{-1}(\Omega)},\label{continuous-estimates-m}
		\\
		\|u\|_{H^1(\Omega)}\leq C^{**}\left(\|G\|_{H^{-1}(\Omega)}+\|f\|_{C(\overline{\Omega}\times\mathcal{A})}+1\right), \label{continuous-estimates-u}
	\end{gather}
	for some constants $C^*,C^{**}\geq 0$ depending only on $n$, $\Omega$, $\nu$, $\|b\|_{C(\overline{\Omega}\times\mathcal{A};\mathbb{R}^n)}$, $\kappa$ and $c_1$. 
\end{theorem}
The second main result ensures uniqueness of weak solutions of \eqref{sys} under a monotonicity condition on $F$ that is similar to the one that was used by Lasry and Lions in \cite{lasry2007mean}.
Since we also consider problems with source terms, we shall further require nonnegativity of $G$ in the sense of distributions.
\begin{theorem}[Uniqueness of Weak Solutions]\label{thm2-uniqueness} 
	If $F$ is strictly monotone and $G$ is nonnegative in the sense of distributions in $H^{-1}(\Omega)$, then there exists a unique weak solution pair $(u,m)\in H^1_0(\Omega)\times H^1_0(\Omega)$ to \eqref{sys} in the sense of Definition \ref{weakdef}.
\end{theorem}

\begin{remark}
	\emph{To avoid any confusion, we stress that Theorem~\ref{thm2-uniqueness} guarantees the uniqueness of the weak solution pair $(u,m)$ under the relevant hypotheses, although the advective vector field $\tilde{b}\in D_pH[u]$ that appears in Definition \ref{weakdef} may be nonunique.
	Note also that the monotonicity condition on $F$ is similar to the monotonicity condition on the coupling term used by Lasry \& Lions in~\cite{lasry2006jeux1} for classical solutions to ergodic mean field game systems with $C^1$ Hamiltonians.}
\end{remark}

\subsection{An Example}\label{sec:example}

Let us consider an example problem that motivates the definition of Definition \ref{Def1} and illustrates some challenges that arise in the case of nondifferentiable Hamiltonians.
\begin{eg}\label{eg-1}
	\emph{For simplicity, we consider a system in one space dimension
	\begin{equation}\label{sys-1d-example}
		-  u_{xx}+H(u_x)=F[m], \qquad -  m_{xx} -G(x)\in \left(m\partial_p H( u_x)\right)_x\qquad\text{ in }\Omega,
	\end{equation}
	where $\Omega = (-1,1)\subset  \mathbb{R}$, along with the homogeneous Dirichlet boundary conditions $u=m=0$ on $\partial\Omega=\{-1,1\}$.
	We consider a MFG where the control set of the players is $\mathcal{A}=\{-1,1\}$, where the drift $b$ and running-cost component $f$ are given by $b(x,\alpha)=\alpha$ and $f(x,\alpha)=0$ for all $\alpha \in \mathcal{A}$ and $x\in\Omega$.
	The resulting Hamiltonian in~\eqref{1''} therefore simplifies to $H(u_x)=\sup_{\alpha\in\mathcal{A}}(\alpha u_x)=\abs{u_x}$.
	Let $G(x) = \chi_{[-1/2,1/2]}$ where $\chi_{[-1/2,1/2]}$ denotes the indicator function for the interval $[-1/2,1/2]$, and let the coupling term $F[z]\coloneqq z-h+1-\chi_{[-1/2,1/2]}$ for all $z\in L^2(\Omega)$, where the function $h$ is defined by
	\begin{equation}
		h(x)\coloneqq 
		\begin{cases}
			\frac{1}{2} \left(1-\mathrm{e}^{\abs{x}-1} \right)&\text{ if }x\in [-1,-1/2]\cup [1/2,1],
			\\
			\frac{1}{2} \left(\frac{5}{4}-\mathrm{e}^{-\frac{1}{2}} -x^2 \right) &\text{ if }x\in [-1/2,1/2].
		\end{cases}
	\end{equation}
	Note that $G$ is nonnegative and $F$ is strongly monotone on $L^2(\Omega)$, so~\eqref{sys-1d-example} admits a unique solution.
	The problem can be solved analytically, and the exact solution is
	\begin{equation}
		m(x) = h(x), \quad u(x) =
		\begin{cases}
			1-\abs{x} + \mathrm{e}^{-\frac{1}{2}}-\mathrm{e}^{\frac{1}{2}-\abs{x}} &\text{ if }x\in [-1,-1/2]\cup [1/2,1], \\
			\mathrm{e}^{-\frac{1}{2}} - \frac{1}{2} &\text{ if }x\in [-1/2,1/2].
		\end{cases}
	\end{equation}
	Furthermore, we find that the unique function $\tilde{b}_*\in D_pH[u]$ for which $-m_{xx}-(\tilde{b}_* m)_x = G$ in $\Omega$ is given by $\tilde{b}_*|_{[-1,-1/2]}=1$, $\tilde{b}_*|_{(-1/2,1/2)}=0$, and $\tilde{b}_*|_{[1/2,1]}=-1$.
	To see that $\tilde{b}_*$ is unique, note that if $\hat{b}_*\in L^\infty(\Omega)$ also satisfies $-m_{xx}-(\hat{b}_* m)_x =G$ in $\Omega$, then $m(\tilde{b}_*-\hat{b}_*)$ is constant in $(-1,1)$. Since $m$ satisfies the homogeneous Dirichlet condition on the boundary we deduce that the constant must be zero, and since $m$ is nonvanishing inside $\Omega$, we find that $\tilde{b}_*=\hat{b}_*$ a.e.\ in $\Omega$, thus $\tilde{b}_*$ is unique.}
	
	\emph{This example illustrates several points. First, the solution $m$ is not continuously differentiable in the interior of the domain and thus $m\notin H^2(\Omega)$, despite the facts that $F[m]$ and $G$ are in $L^2(\Omega)$. This is due to the jumps in the vector field $b_*$ at $x=\pm 1/2$. This example shows how loss of smoothness of the solution can occur in the interior of the domain for problems with nondifferentiable Hamiltonians.}
	
	\emph{The second point concerns the motivation for choosing the subdifferential $D_pH[u]$ as the appropriate set for defining the possible advective fields in Definition \ref{Def1}.
	Observe that in the region $(-1/2,1/2)$, the set of optimal feedback controls is the whole control set $\mathcal{A} = \{-1,1\}$ since $u_x =0$ in $(-1/2,1/2)$.
	However $\tilde{b}_*(x) \notin \mathcal{A}$ for all $x\in(-1/2,1/2)$, i.e.\ $\tilde{b}_*$ does not coincide with any optimal feedback policy in this region.
	Since $\tilde{b}_*$ is unique in this example, it is then clear that in Definition \ref{Def1}, we cannot generally require that $\tilde{b}_*$ necessarily belong to smaller sets than $D_pH[u]$,  such as the set of drifts generated by optimal feedback policies.}
\end{eg}

\section{Analysis of the Continuous Problem}\label{Sec 4}
\subsection{Preliminary Results}\label{Sec_4_prelim}
We begin by introducing the point-wise maximizing set of the Hamiltonian.
Define the set-valued map $\Lambda\colon\Omega\times\mathbb{R}^n\rightrightarrows \mathcal{A}$ by
\begin{equation}
	\Lambda(x,p)\coloneqq\text{argmax}_{\alpha\in\mathcal{A}}\{b(x,\alpha)\cdot p-f(x,\alpha)\} \quad \forall (x,p)\in \Omega\times \mathbb{R}^n,
\end{equation}
Note that $\Lambda(x,p)$ is nonempty for all $x\in \Omega$ and all $p\in\mathbb{R}^n$ since $\mathcal{A}$ is compact and the functions $b$ and $f$ are uniformly continuous.
The following Lemma, which is a consequence of \cite[Proposition 4.4]{aubin1993optima}, shows the link between the sets of maximizing controls and the subdifferentials of the Hamiltonian.
\begin{lemma}\label{convv}
	Let $H$ be given by \eqref{1''}. Then 
	\begin{equation}\label{convv-formula}
		\partial_pH(x,p)=\conv\left\{b(x,\alpha):\alpha\in \Lambda(x,p)\right\}\quad\forall (x,p)\in{\Omega}\times\mathbb{R}^n.
	\end{equation}
\end{lemma}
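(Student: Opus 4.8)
The plan is to fix an arbitrary $(x,p)\in\overline{\Omega}\times\mathbb{R}^n$ and prove the identity \eqref{convv-formula} by establishing the two set inclusions separately. Throughout, I would use that, by \eqref{bounds}, $H(x,\cdot)$ is a finite-valued, convex, globally Lipschitz function on $\mathbb{R}^n$, so that $\partial_p H(x,p)$ is a nonempty compact convex set; I would also note that $\Lambda(x,p)$ is compact, being the set of maximisers of a continuous function over the compact set $\mathcal{A}$, and that $\alpha\mapsto b(x,\alpha)$ is continuous, so $\{b(x,\alpha):\alpha\in\Lambda(x,p)\}$ is compact and its convex hull is already closed; hence $\conv$ may be read as the ordinary convex hull here.

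For the inclusion ``$\supseteq$'', which is elementary, I would take any $\alpha\in\Lambda(x,p)$, so that $H(x,p)=b(x,\alpha)\cdot p-f(x,\alpha)$ by the definition of $\Lambda$, while for every $q\in\mathbb{R}^n$ the definition \eqref{1''} gives $H(x,q)\geq b(x,\alpha)\cdot q-f(x,\alpha)=H(x,p)+b(x,\alpha)\cdot(q-p)$. Thus $b(x,\alpha)\in\partial_p H(x,p)$, and since $\partial_p H(x,p)$ is closed and convex it contains $\conv\{b(x,\alpha):\alpha\in\Lambda(x,p)\}$.

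For the reverse inclusion ``$\subseteq$'', which is the crux, I would argue by separation together with a Danskin-type formula for the one-sided directional derivative $H'(x,p;d):=\lim_{t\downarrow0}t^{-1}(H(x,p+td)-H(x,p))$, the limit existing by convexity. Suppose some $\tilde b\in\partial_p H(x,p)$ does not belong to $K:=\conv\{b(x,\alpha):\alpha\in\Lambda(x,p)\}$; since $K$ is compact and convex, the Hahn--Banach separation theorem yields $d\in\mathbb{R}^n$ with $\tilde b\cdot d>\max_{\alpha\in\Lambda(x,p)}b(x,\alpha)\cdot d$. The subgradient inequality gives $t^{-1}(H(x,p+td)-H(x,p))\geq\tilde b\cdot d$ for all $t>0$, hence $H'(x,p;d)\geq\tilde b\cdot d$. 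On the other hand I would show $H'(x,p;d)=\max_{\alpha\in\Lambda(x,p)}b(x,\alpha)\cdot d$: the bound ``$\geq$'' follows by testing \eqref{1''} with any active control, and for ``$\leq$'' I would pick $t_k\downarrow0$ and maximisers $\alpha_k\in\Lambda(x,p+t_kd)$; using $H(x,p+t_kd)=b(x,\alpha_k)\cdot(p+t_kd)-f(x,\alpha_k)$ and $H(x,p)\geq b(x,\alpha_k)\cdot p-f(x,\alpha_k)$ gives $t_k^{-1}(H(x,p+t_kd)-H(x,p))\leq b(x,\alpha_k)\cdot d$; passing to a subsequence with $\alpha_k\to\alpha_*$ and using continuity of $b,f$ together with continuity of $H(x,\cdot)$ shows $\alpha_*\in\Lambda(x,p)$, so the right-hand side converges to $b(x,\alpha_*)\cdot d\leq\max_{\alpha\in\Lambda(x,p)}b(x,\alpha)\cdot d$. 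Combining the two facts about $H'(x,p;d)$ contradicts the strict separating inequality, proving $\partial_p H(x,p)\subseteq K$. Alternatively, one may simply verify the hypotheses of \cite[Proposition~4.4]{aubin1993optima} in the present setting and quote it directly.

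The main obstacle is the ``$\leq$'' half of the directional-derivative identity, i.e.\ controlling the near-optimal controls $\alpha_k$ at the perturbed points $p+t_kd$ and passing to the limit; this is precisely where compactness of $\mathcal{A}$ and the uniform continuity of $b$ and $f$ enter, and it amounts to the outer semicontinuity of the maximiser map $\Lambda$ at $p$. The remaining steps are routine convex analysis and separation in $\mathbb{R}^n$.
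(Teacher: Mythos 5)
Your proof is correct. Note, however, that the paper does not actually argue this lemma at all: it obtains \eqref{convv-formula} directly as a consequence of \cite[Proposition~4.4]{aubin1993optima} (a Danskin-type theorem on subdifferentials of sup-functions), with no further details. What you have written is essentially a self-contained reconstruction of that cited result in the present special case: the easy inclusion via the subgradient inequality for an active control, and the hard inclusion via Hahn--Banach separation combined with the directional-derivative identity $H'(x,p;d)=\max_{\alpha\in\Lambda(x,p)}b(x,\alpha)\cdot d$, whose ``$\leq$'' half you correctly reduce to outer semicontinuity of the maximiser map, using compactness of $\mathcal{A}$ and continuity of $b$, $f$. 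Your preliminary observations (compactness of $\Lambda(x,p)$ and of $\{b(x,\alpha):\alpha\in\Lambda(x,p)\}$, hence closedness of its convex hull so that $\conv$ is the plain convex hull) are exactly the points needed to make the separation argument work, and the limit passage $\alpha_k\to\alpha_*$ with $\alpha_*\in\Lambda(x,p)$ is justified as you describe. So the two routes differ only in economy: the paper buys brevity by quoting Aubin, while your argument buys self-containedness at the cost of about a page of standard convex analysis; you even flag the citation route as an alternative, which is precisely what the authors chose.
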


\begin{remark}\label{rem:drift_meaning}
	\emph{Lemma \ref{convv} offers some insight into the significance of the term $\tilde{b}_*$ appearing in Definition \ref{Def1} from a modelling perspective.
	Indeed, it shows that for a.e.\ $x\in\Omega$, $\tilde{b}_*(x)$ is in the closed convex hull of the set of drifts generated by the optimal controls from $\Lambda(x,\nabla u(x))$. 
	This suggests that the players in the same region of state space can make distinct choices among nonunique optimal controls, leading to an aggregate advective flux for the player density.}
\end{remark}

To show that $D_pH$ possess nonempty images, we introduce an auxiliary set-valued map.  For a given $v\in W^{1,1}(\Omega)$, let $\Lambda[v]$ denote the set of all Lebesgue measurable functions $\alpha^*:\Omega\to\mathcal{A}$ that satisfy $\alpha^*(x)\in\Lambda(x,\nabla v(x))$ for a.e.\ $x\in\Omega.$ We will refer to each element of $\Lambda[v]$ as \emph{a measurable selection of }$\Lambda(\cdot,\nabla v(\cdot))$. 
It is known that $\Lambda[v]$ is nonempty for each $v\in W^{1,1}(\Omega)$, see e.g.\ \cite[Theorem~10]{smears2014discontinuous}, where the proof the existence of measurable selections ultimately rests upon the Kuratowski and Ryll--Nardzewski Selection Theorem~\cite{kuratowski1965general}.
We now show that the set-valued map $D_pH$ has nonempty images.
\begin{lemma}\label{Prop1}
	For each $v\in W^{1,1}(\Omega)$, the set $D_pH[v]$ is a nonempty subset of $L^{\infty}(\Omega;\mathbb{R}^n)$, and we have the uniform bound
	\begin{equation}\label{Hsubdiff-bound}
		\sup_{v\in W^{1,1}(\Omega)}\left[\sup_{\tilde{b}\in D_pH[v]}\|\tilde{b}\|_{L^{\infty}(\Omega;\mathbb{R}^n)}\right]\leq \|b\|_{C(\overline{\Omega}\times\mathcal{A};\mathbb{R}^n)}.
	\end{equation}
\end{lemma}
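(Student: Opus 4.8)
The plan is to exhibit a single explicit element of $D_pH[v]$, obtained by composing a measurable selection of the maximising map $\Lambda$ with the drift $b$, and then to read off the uniform bound directly from the fact that all the subdifferential sets lie in a fixed ball.

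First I would fix $v\in W^{1,1}(\Omega)$ and use the non-emptiness of $\Lambda[v]$ (recalled above, ultimately via the Kuratowski and Ryll--Nardzewski selection theorem) to pick a Lebesgue measurable $\alpha^*\colon\Omega\to\mathcal{A}$ with $\alpha^*(x)\in\Lambda(x,\nabla v(x))$ for a.e.\ $x\in\Omega$. I would then set $\tilde b(x)\coloneqq b(x,\alpha^*(x))$. Since $x\mapsto(x,\alpha^*(x))$ is Lebesgue measurable from $\Omega$ into $\overline{\Omega}\times\mathcal{A}$ and $b$ is continuous there, the composition $\tilde b=b(\cdot,\alpha^*(\cdot))$ is Lebesgue measurable.

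Next I would invoke Lemma~\ref{convv}: for a.e.\ $x\in\Omega$, since $\alpha^*(x)\in\Lambda(x,\nabla v(x))$,
\[
\tilde b(x)=b(x,\alpha^*(x))\in\{b(x,\alpha):\alpha\in\Lambda(x,\nabla v(x))\}\subset\conv\{b(x,\alpha):\alpha\in\Lambda(x,\nabla v(x))\}=\partial_pH(x,\nabla v(x)),
\]
so $\tilde b$ is a measurable selection of $\partial_pH(\cdot,\nabla v)$. Combined with the uniform boundedness of the subdifferential sets — the Lipschitz bound \eqref{bounds:lipschitz} forces $\partial_pH(x,p)$ into the closed ball of radius $c_4=\|b\|_{C(\overline{\Omega}\times\mathcal{A})}$ centred at the origin for every $(x,p)$ — this gives $\|\tilde b\|_{L^\infty(\Omega;\mathbb{R}^n)}\le\|b\|_{C(\overline{\Omega}\times\mathcal{A})}$, hence $\tilde b\in D_pH[v]$ and $D_pH[v]\neq\emptyset$. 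The uniform bound \eqref{Hsubdiff-bound} then follows from the same containment: any $\tilde b\in D_pH[v]$ satisfies $\tilde b(x)\in\partial_pH(x,\nabla v(x))\subset\overline{B_{c_4}(0)}$ for a.e.\ $x$, so $\|\tilde b\|_{L^\infty(\Omega;\mathbb{R}^n)}\le c_4$ uniformly in $\tilde b$ and in $v$, and taking suprema preserves the inequality.

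I expect the only slightly delicate step to be the measurability of the composition $x\mapsto b(x,\alpha^*(x))$; this is the standard fact that composing a jointly continuous (indeed merely Carath\'eodory) function with a measurable map yields a measurable map, but since the non-emptiness of $D_pH[v]$ rests on it, it is worth recording explicitly. The remaining ingredients — Lemma~\ref{convv} and the ball estimate already noted after~\eqref{subdifferential} — are immediate.
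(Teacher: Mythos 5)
Your proposal is correct and follows essentially the same route as the paper: both take a measurable selection $\alpha^*\in\Lambda[v]$ (via Kuratowski--Ryll-Nardzewski), set $\tilde b=b(\cdot,\alpha^*(\cdot))$, and conclude with the uniform ball estimate on $\partial_pH$. The only cosmetic difference is that you justify the pointwise inclusion $b(x,\alpha^*(x))\in\partial_pH(x,\nabla v(x))$ by citing the easy direction of Lemma~\ref{convv}, whereas the paper verifies the subgradient inequality directly from the definition of $H$ as a supremum, which amounts to the same computation.
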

\begin{proof}
	Let $v\in W^{1,1}(\Omega)$ be given. We need to show  $D_pH[v]$ is nonempty. nonemptiness of $\Lambda[v]$ implies that there exists a Lebesgue measurable map $\alpha^*:\Omega\to\mathcal{A}$ such that $\alpha^*(x) \in \Lambda(x, \nabla v(x))$ for a.e.\ $x\in \Omega$, and thus $H(x,\nabla v(x))=b(x,\alpha^*(x))\cdot\nabla v(x)-f(x,\alpha^*(x))$
	for a.e.\ $x\in\Omega$. Now, suppose $q\in\mathbb{R}^n$ is arbitrary. We find by definition of $H(x,q)$ that, for a.e.\ $x\in\Omega$, 
	\begin{equation*}
		\begin{split}
			H(x,q)&\geq b(x,{\alpha^*(x)})\cdot q-f(x,\alpha^*(x))
			\\
			&=b(x,{\alpha^*(x)})\cdot q+H(x,\nabla v(x))-b(x,{\alpha^*(x)})\cdot\nabla v(x)
			\\
			&=H(x,\nabla v(x))+b(x,{\alpha^*(x)})\cdot\left(q-\nabla v(x)\right).
		\end{split}
	\end{equation*}
	It follows then that $ b(x,\alpha^*(x))\in \partial_p H(x,\nabla v(x))$ for a.e.\ $x\in\Omega$.
	Furthermore, we have $b(\cdot,\alpha^*(\cdot)) \in L^\infty(\Omega;\mathbb{R}^n)$ since $\|b(\cdot,{\alpha^*(\cdot)})\|_{L^{\infty}(\Omega;\mathbb{R}^n)}\leq \|b\|_{C(\overline{\Omega}\times\mathcal{A};\mathbb{R}^n)}.$ Hence, $D_pH[v]$ is nonempty, as claimed.
	Finally, the bound~\eqref{Hsubdiff-bound} follows immediately from the fact that, for all $(x,p)\in\Omega\times \mathbb{R}^n$, the subdifferential set $\partial_p H(x,p)$ is contained in the closed ball of radius $\lVert b \rVert_{C(\overline{\Omega}\times\mathcal{A};\mathbb{R}^n)}$ centred at the origin.
\end{proof} 

The following Lemma shows that $D_pH$ has a certain closure property with respect to convergent sequences of its arguments and their measurable selections.
\begin{lemma}\label{inclusion}
	Suppose $\{v_j\}_{j\in\mathbb{N}}\subset H^1(\Omega)$, $\{\tilde{b}_{j}\}_{j\in\mathbb{N}}\subset L^{\infty}(\Omega;\mathbb{R}^n) $ are sequences such that $\tilde{b}_j\in D_pH[v_j]$ for all $j\in\mathbb{N}$. If $v_j\to v$ in $H^1(\Omega)$ and $\tilde{b}_j\rightharpoonup^* \tilde{b}$ in $L^{\infty}(\Omega;\mathbb{R}^n)$ as $j\to \infty$, then $\tilde{b}\in D_pH[v]$. 
\end{lemma}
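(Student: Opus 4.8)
The plan is to pass to the limit in the defining inequality of the pointwise subdifferential, using the closure property of subdifferentials of convex functions together with a density/selection argument to upgrade pointwise convergence statements into the required membership $\tilde b \in D_pH[v]$. The key difficulty is that weak-$*$ convergence of $\tilde b_j$ gives no pointwise control, so I cannot directly test the pointwise inequality $H(x,q)\ge H(x,\nabla v_j(x))+\tilde b_j(x)\cdot(q-\nabla v_j(x))$ at a.e.\ $x$; instead I will integrate against nonnegative test functions and exploit convexity structure.

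First I would fix an arbitrary $q\in\mathbb{R}^n$ and an arbitrary nonnegative $\varphi\in L^1(\Omega)$ (or just $\varphi\in C_c^\infty(\Omega)$, $\varphi\ge0$). From $\tilde b_j\in D_pH[v_j]$ we have, for a.e.\ $x\in\Omega$,
\[
H(x,q)\ \ge\ H(x,\nabla v_j(x))+\tilde b_j(x)\cdot\bigl(q-\nabla v_j(x)\bigr).
\]
Multiply by $\varphi(x)\ge0$ and integrate:
\[
\int_\Omega H(x,q)\,\varphi\,\mathrm{d}x \ \ge\ \int_\Omega H(x,\nabla v_j)\,\varphi\,\mathrm{d}x + \int_\Omega \tilde b_j\cdot q\,\varphi\,\mathrm{d}x - \int_\Omega \tilde b_j\cdot\nabla v_j\,\varphi\,\mathrm{d}x.
\]
Now I pass to the limit term by term. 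The first term on the right converges because $v_j\to v$ in $H^1(\Omega)$ implies $\nabla v_j\to\nabla v$ in $L^2$, hence (after passing to a subsequence) a.e., and $H(x,\cdot)$ is continuous with the linear growth bound~\eqref{bounds}, so by dominated convergence $\int_\Omega H(x,\nabla v_j)\varphi\to\int_\Omega H(x,\nabla v)\varphi$ — alternatively use that $v\mapsto H(\cdot,\nabla v)$ is Lipschitz from $H^1$ to $L^2$ as noted after~\eqref{bounds}. The term $\int_\Omega \tilde b_j\cdot q\,\varphi\,\mathrm{d}x$ converges to $\int_\Omega \tilde b\cdot q\,\varphi\,\mathrm{d}x$ by weak-$*$ convergence of $\tilde b_j$ (since $q\varphi\in L^1(\Omega;\mathbb{R}^n)$). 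For the last term $\int_\Omega \tilde b_j\cdot\nabla v_j\,\varphi\,\mathrm{d}x$, write $\nabla v_j = \nabla v + (\nabla v_j-\nabla v)$; the piece $\int_\Omega \tilde b_j\cdot\nabla v\,\varphi\,\mathrm{d}x \to \int_\Omega \tilde b\cdot\nabla v\,\varphi\,\mathrm{d}x$ again by weak-$*$ convergence against the fixed $L^1$ function $\varphi\nabla v$, while the remainder is bounded by $\lVert \tilde b_j\rVert_{L^\infty}\lVert\varphi\rVert_{?}\lVert\nabla v_j-\nabla v\rVert_{\Omega}$ — using the uniform bound $\lVert\tilde b_j\rVert_{L^\infty}\le \lVert b\rVert_{C(\overline\Omega\times\mathcal A)}$ from Lemma~\ref{Prop1} — and hence tends to zero (choosing $\varphi$ bounded, e.g.\ $\varphi\in C_c^\infty$, makes this immediate). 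Passing to the limit yields
\[
\int_\Omega H(x,q)\,\varphi\,\mathrm{d}x \ \ge\ \int_\Omega \Bigl[H(x,\nabla v(x))+\tilde b(x)\cdot\bigl(q-\nabla v(x)\bigr)\Bigr]\varphi\,\mathrm{d}x.
\]

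Since this holds for every nonnegative $\varphi\in C_c^\infty(\Omega)$, the standard localization argument gives the pointwise inequality
\[
H(x,q)\ \ge\ H(x,\nabla v(x))+\tilde b(x)\cdot\bigl(q-\nabla v(x)\bigr)\qquad\text{for a.e.\ }x\in\Omega,
\]
with the exceptional null set depending on $q$. To conclude $\tilde b(x)\in\partial_p H(x,\nabla v(x))$ for a.e.\ $x$ — which requires the inequality to hold simultaneously for all $q\in\mathbb{R}^n$ — I take a countable dense set $\{q_k\}\subset\mathbb{Q}^n$, obtain a common null set outside which the inequality holds for every $q_k$, and then extend to arbitrary $q\in\mathbb{R}^n$ by continuity of $H(x,\cdot)$ and of the affine map $q\mapsto H(x,\nabla v(x))+\tilde b(x)\cdot(q-\nabla v(x))$. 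This shows $\tilde b(x)\in\partial_p H(x,\nabla v(x))$ a.e., and since $\tilde b\in L^\infty(\Omega;\mathbb{R}^n)$ (as a weak-$*$ limit of a bounded sequence, or directly because the subdifferential sets are contained in a fixed ball), we obtain $\tilde b\in D_pH[v]$ by Definition~\ref{Def1}. The only subtlety to flag is the order of quantifiers over $q$ versus the a.e.\ statement; the countable-dense-set-plus-continuity trick is exactly what resolves it, and the uniform $L^\infty$ bound from Lemma~\ref{Prop1} is what makes the limit passage in the $\tilde b_j\cdot\nabla v_j$ term work despite having only weak-$*$ convergence.
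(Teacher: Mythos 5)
Your proof is correct and follows essentially the same route as the paper: for each fixed $q$ you pass to the limit in the subdifferential inequality using the strong convergence of $\nabla v_j$, the weak-$*$ convergence of $\tilde b_j$, and the uniform $L^\infty$ bound, and then handle the quantifier over $q$ by separability of $\mathbb{R}^n$. The only cosmetic difference is that the paper phrases the limit passage as weak $L^2$ convergence of the nonnegative functions $\omega_j(x)=H(x,q)-H(x,\nabla v_j)-\tilde b_j\cdot(q-\nabla v_j)$ together with weak closedness (Mazur) of the nonnegative cone, whereas you test against nonnegative smooth functions and localize, which amounts to the same argument.
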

\begin{proof}	
	Introduce the set $Y\coloneqq\left\{v\in L^2(\Omega):v(x)\geq 0\text{ for a.e. }x\in\Omega\right\}$ of nonnegative a.e.\ functions in $L^2(\Omega)$, and note that Mazur's Theorem implies that $Y$ is weakly closed in $L^2(\Omega)$ since it is convex and strongly closed.
	Let $q\in\mathbb{R}^n$ be a fixed but arbitrary vector.
	Define the sequence of real-valued functions $\{\omega_j\}_{j=1}^{\infty}\subset L^2(\Omega)$ by 
	\begin{equation}
		\omega_j(x)\coloneqq H(x,q)-H(x,\nabla v_j(x))-\tilde{b}_j(x)\cdot\left(q-\nabla v_j(x)\right)
	\end{equation}
	for each $j\in \mathbb{N}$ and a.e.\ $x \in \Omega$.
	It follows from the definitions of the subdifferential sets~\eqref{subdifferential} and Definition~\ref{Def1} that $\omega_j \in Y$ for each $j\geq 1$. The hypothesis of strong convergence of $\{v_j\}_{j\in\mathbb{N}}$ and weak-$*$ convergence of $\{\tilde{b}_j\}_{j\in\mathbb{N}}$ implies the weak convergence $\omega_j\rightharpoonup \omega$ in $L^2(\Omega)$ where
	\begin{equation}
		\omega(x)\coloneqq H(x,q)-H(x,\nabla v(x))-\tilde{b}(x)\cdot\left(q-\nabla v(x)\right)
	\end{equation}
	for a.e.\ $x\in \Omega$.
	Since $Y$ is weakly closed, it follows that $\omega \in Y$. Since $q \in \mathbb{R}^n$ is arbitrary and since $\mathbb{R}^n$ is separable, we conclude that $\tilde{b} \in D_pH[v]$.
\end{proof}

\subsection{Existence of Weak Solutions}
In this section, we prove Theorem \ref{thm1-existence}. To begin, we introduce  notation describing a collection of linear differential operators in weak form. Given $C_0\geq 0$, let $\mathcal{G}(C_0)$ denote the set of all operators $L:H_0^1(\Omega)\to H^{-1}(\Omega)$ of the form
\begin{equation}\label{eq:wmp_1}
	\langle Lu,v\rangle_{H^{-1}\times H_0^1}=\int_{\Omega}\nu\nabla u\cdot\nabla v+\tilde{b}\cdot\nabla u v+cuv\,\mathrm{d}x, 
\end{equation} 
where the coefficients satisfy
\begin{equation}\label{eq:wmp_2}  
	\lVert \tilde{b}\rVert_{L^{\infty}(\Omega;\mathbb{R}^n)}+\lVert c\rVert_{L^{\infty}(\Omega)}\leq C_0,\text{ and }  c\geq 0\text{ a.e.\ in } \Omega. 
\end{equation}
Moreover, given an operator $L\in \mathcal{G}(C_0)$ for some $C_0\geq 0$, we define $L^*:H_0^1(\Omega)\to H^{-1}(\Omega)$, the formal adjoint of $L$, by $\langle L^*w,v\rangle_{H^{-1}\times H_0^1} \coloneqq \langle Lv, w\rangle_{H^{-1}\times H_0^1}$ for all $ w,v\in H_0^1(\Omega).$ 
The invertibility of operators $L$ and their adjoints $L^*$ from the class $\mathcal{G}(C_0)$ is well-known, and follows from the the Fredholm Alternative together with the Weak Maximum Principle and the Comparison Principle (see \cite[Ch. 8, Ch. 10]{gilbarg2015elliptic}). 
In the analysis below, we will use the following stronger result, which shows that for fixed $C_0$, there is a uniform bound on the norm of the inverses of all operators and their adjoints from the class $\mathcal{G}(C_0)$.
\begin{lemma}\label{unif}
	Let $C_0\geq 0$ be given. For every operator $L\in\mathcal{G}(C_0)$, both $L$ and $L^*$ are boundedly invertible as mappings from $H^1_0(\Omega)$ to $H^{-1}(\Omega)$, and there exists a constant $C_1> 0$ depending on only $\Omega$, $n$, $\nu$, and  $C_0$ such that 
	\begin{equation}\label{eq:invertibility} 
		\sup_{L\in\mathcal{G}(C_0)}\max\left\{\left\|L^{-1}\right\|_{\mathcal{L}\left(H^{-1}(\Omega),H_0^1(\Omega)\right)},\left\|{L^*}^{-1}\right\|_{\mathcal{L}\left(H^{-1}(\Omega),H_0^1(\Omega)\right)}\right\}\leq C_1.
	\end{equation}
\end{lemma}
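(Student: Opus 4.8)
The plan is to decouple two tasks: first, establish invertibility of each individual $L \in \mathcal{G}(C_0)$ (and its adjoint $L^*$), and second, upgrade this to a uniform bound on the inverses over the whole class. For the first task, I would note that each $L$ (and each $L^*$, which is itself an operator of the same general form with the drift term transferred to the other factor via integration by parts) is a bounded linear operator $H^1_0(\Omega) \to H^{-1}(\Omega)$ that is a compact perturbation of the coercive operator $-\nu\Delta$. Precisely, write $\langle Lu, v\rangle = \nu(\nabla u, \nabla v) + \langle Ku, v\rangle$ where $\langle Ku,v\rangle = \int_\Omega (\tilde b \cdot \nabla u)\,v + c\,u\,v\,\mathrm dx$; since the embedding $H^1_0(\Omega)\hookrightarrow L^2(\Omega)$ is compact and $\tilde b, c \in L^\infty$, the operator $K : H^1_0(\Omega)\to H^{-1}(\Omega)$ is compact. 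By the Fredholm Alternative, $L$ is invertible iff it is injective, and injectivity follows from the Weak Maximum Principle: if $Lu = 0$ then, since $c \geq 0$ a.e., the maximum principle (Gilbarg–Trudinger, Ch.~8) gives $u \leq 0$ and $-u \leq 0$, hence $u = 0$. The same argument applies to $L^*$ because $L^*$ has the same zeroth-order coefficient $c \geq 0$ and an $L^\infty$ drift of norm still bounded by $C_0$. Thus each $L$ and $L^*$ is boundedly invertible.

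For the uniform bound — the genuinely substantive part — I would argue by contradiction and compactness. Suppose \eqref{eq:invertibility} fails: then there exist sequences $L_j \in \mathcal{G}(C_0)$ with coefficients $\tilde b_j, c_j$, and $g_j \in H^{-1}(\Omega)$ with $\|g_j\|_{H^{-1}(\Omega)} = 1$, such that the solutions $u_j$ of $L_j u_j = g_j$ (or the analogous sequence for the adjoints) satisfy $\|u_j\|_{H^1_0(\Omega)} \to \infty$ — treating the $L_j$ case; the $L_j^*$ case is handled identically. Normalise by setting $w_j \coloneqq u_j / \|u_j\|_{H^1_0(\Omega)}$, so $\|w_j\|_{H^1_0(\Omega)} = 1$ and $L_j w_j = g_j / \|u_j\|_{H^1_0(\Omega)} \to 0$ in $H^{-1}(\Omega)$. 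Passing to subsequences, $w_j \rightharpoonup w$ in $H^1_0(\Omega)$, $w_j \to w$ strongly in $L^2(\Omega)$, $\tilde b_j \rightharpoonup^* \tilde b$ in $L^\infty(\Omega;\mathbb{R}^n)$, and $c_j \rightharpoonup^* c$ in $L^\infty(\Omega)$, with $\tilde b, c$ still satisfying \eqref{eq:wmp_2} (the bound passes to the limit, and $c \geq 0$ a.e.\ since the nonnegative cone is weak-* closed). The key is then to pass to the limit in the weak form $\int_\Omega \nu \nabla w_j \cdot \nabla v + (\tilde b_j \cdot \nabla w_j) v + c_j w_j v \,\mathrm dx \to 0$ for each fixed $v \in H^1_0(\Omega)$: the strong $L^2$ convergence $w_j \to w$ combined with weak-* convergence of $\tilde b_j$ and $c_j$ (and $\nabla v \in L^2$, $v \in L^2$) handles the lower-order terms, while the first term passes to the limit by weak convergence of $\nabla w_j$. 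Hence $L_\infty w = 0$ where $L_\infty \in \mathcal{G}(C_0)$ has coefficients $\tilde b, c$; by the injectivity established in the first part, $w = 0$.

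To reach a contradiction I must show $w_j \to w = 0$ \emph{strongly} in $H^1_0(\Omega)$, contradicting $\|w_j\|_{H^1_0(\Omega)} = 1$. Test the equation $L_j w_j = g_j/\|u_j\|_{H^1_0(\Omega)}$ with $v = w_j$ itself: $\nu \|\nabla w_j\|_\Omega^2 = \langle g_j, w_j\rangle/\|u_j\|_{H^1_0(\Omega)} - \int_\Omega (\tilde b_j \cdot \nabla w_j) w_j + c_j w_j^2 \,\mathrm dx$. The first term is $O(1/\|u_j\|_{H^1_0(\Omega)}) \to 0$; in the integral, $\|\tilde b_j\|_{L^\infty}, \|c_j\|_{L^\infty} \leq C_0$ and $w_j \to 0$ strongly in $L^2$ while $\nabla w_j$ is bounded in $L^2$, so $\int_\Omega (\tilde b_j \cdot \nabla w_j) w_j\,\mathrm dx \to 0$ by Cauchy–Schwarz and $\int_\Omega c_j w_j^2 \,\mathrm dx \to 0$. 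Therefore $\nu \|\nabla w_j\|_\Omega^2 \to 0$, i.e.\ $w_j \to 0$ in $H^1_0(\Omega)$ by Poincaré, contradicting the normalisation. This yields the uniform bound $C_1$, which depends only on the data allowed to vary ($\Omega$, $n$, $\nu$, $C_0$) since the contradiction argument used nothing else. I expect the main obstacle to be bookkeeping rather than conceptual: one must carry both $L$ and $L^*$ through simultaneously (or observe that $\{L^* : L \in \mathcal{G}(C_0)\} \subseteq \mathcal{G}(C_0')$ for a comparable $C_0'$, reducing to one case), and be careful that the weak-* limits of the coefficients remain admissible, in particular that $c \geq 0$ is preserved. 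If one prefers to avoid the contradiction argument, an alternative is to invoke the explicit a priori estimates for solutions of elliptic equations in divergence form with bounded coefficients and nonnegative zeroth-order term (Gilbarg–Trudinger, Ch.~8), whose constants depend only on $\Omega$, $n$, $\nu$, and $C_0$; but the compactness argument above is self-contained given the results already in the excerpt.
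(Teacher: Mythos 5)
Your overall strategy — Fredholm plus the weak maximum principle for invertibility of each individual operator, then a normalise--subsequence--contradiction argument via Rellich--Kondrachov and Banach--Alaoglu for the uniform bound — is exactly the one the paper intends (it carries out the analogous argument in full for the discrete class $W_k$ in Lemma~\ref{unifdiscrete}). However, there is a genuine gap in your limit passage when the contradiction argument is run for $L_j$ itself. The drift term is $\int_\Omega (\tilde b_j\cdot\nabla w_j)\,v\,\mathrm{d}x$, which pairs the weak-$*$ convergent $\tilde b_j$ in $L^\infty$ against the merely \emph{weakly} convergent $\nabla w_j$ in $L^2$; the product of two weakly convergent sequences need not converge to the product of the limits (oscillations in $\tilde b_j$ and $\nabla w_j$ correlated on the same scale produce a nonzero defect). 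The strong $L^2$ convergence of $w_j$ that you invoke is irrelevant here because it is $\nabla w_j$, not $w_j$, that multiplies $\tilde b_j$. Consequently you cannot identify the limit equation $L_\infty w=0$, and the argument collapses at the step $w=0$. Your parenthetical fallback — that $\{L^*:L\in\mathcal{G}(C_0)\}\subseteq\mathcal{G}(C_0')$ — is also false for merely bounded measurable drifts, since rewriting $\int_\Omega u\,\tilde b\cdot\nabla v\,\mathrm{d}x$ as a first-order term acting on $u$ requires $\operatorname{div}\tilde b\in L^\infty(\Omega)$. In fact you have the cases backwards: it is the adjoint equation whose drift term passes to the limit harmlessly, not the primal one.

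There are two clean repairs. The one the paper uses (see the proof of Lemma~\ref{unifdiscrete}) is to run the contradiction argument \emph{only} for the adjoints $L_j^*$, whose drift term reads $\int_\Omega w_j\,\tilde b_j\cdot\nabla v\,\mathrm{d}x$: here $w_j\nabla v\to w\nabla v$ strongly in $L^1(\Omega;\mathbb{R}^n)$, so pairing with the bounded weak-$*$ convergent $\tilde b_j$ gives $L_\infty^* w=0$, hence $w=0$ by the injectivity from your first part; your norm-convergence step then goes through verbatim, and the bound for $L^{-1}$ follows for free from $\left\|L^{-1}\right\|=\left\|(L^{-1})^*\right\|=\left\|(L^*)^{-1}\right\|$. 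Alternatively, your direct route can be salvaged by reordering: test $L_jw_j\to 0$ with $w_j-w$ \emph{first}; the drift and reaction contributions are bounded by $C_0\|\nabla w_j\|_\Omega\|w_j-w\|_\Omega$ and $C_0\|w_j\|_\Omega\|w_j-w\|_\Omega$, both of which vanish by Rellich--Kondrachov, yielding $\nu\|\nabla(w_j-w)\|_\Omega^2\to 0$, i.e.\ strong $H^1_0$ convergence; after that the limit equation is identified legitimately and $w=0$ contradicts $\|w\|_{H^1_0(\Omega)}=\lim_j\|w_j\|_{H^1_0(\Omega)}=1$. Either fix completes the proof; as written, the key step is unjustified.
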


Moreover, we will use the following result that guarantees both well-posedness for a class of HJB equations in weak form and a useful continuity property. 
\begin{lemma}[Well-posedness of the HJB equation]\label{thm1}
	Let $m\in L^2(\Omega)$ be given. Then, there exists a unique $u\in H_0^1(\Omega)$ such that
	\begin{equation}\label{weaku}
		\int_{\Omega}\nu\nabla u\cdot\nabla \psi+ H(x,\nabla u)\psi+\kappa u\psi\,\mathrm{d}x = \langle F[m],\psi\rangle_{H^{-1}\times H_0^1} \text{ }\forall \psi\in H_0^{1}(\Omega).
	\end{equation}
	There exists a constant $C_2$ depending only on $\Omega$, $n$, $\nu$, $\kappa$, $\lVert b \rVert_{C(\overline{\Omega}\times \mathcal{A};\mathbb{R}^n)}$ and $c_1$ such that  
	\begin{equation}\label{eq:hjb_a_priori_bound}
		\lVert u \rVert_{H^1(\Omega)}\leq C_2\left(\lVert m \rVert_\Omega + \lVert f \rVert_{C(\overline{\Omega}\times \mathcal{A})}+1\right).
	\end{equation}
	Moreover, the solution $u$ depends continuously on $m$, i.e., if $\left\{m_j\right\}_{j\in\mathbb{N}}\subset L^2(\Omega)$ is such that $m_j\to m$ in $L^2(\Omega)$ as $j\to\infty$, then the corresponding sequence of solutions $\left\{u_j\right\}_{j\in\mathbb{N}}\subset H_0^1(\Omega)$ to the problem~\eqref{weaku} converges in $H_0^1(\Omega)$ to the unique solution $u$ of \eqref{weaku}.
\end{lemma}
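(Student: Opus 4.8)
The plan is to prove the four assertions in the order: uniqueness, the a priori bound \eqref{eq:hjb_a_priori_bound}, existence, and continuous dependence. The unifying idea is to \emph{freeze} the Hamiltonian along a given weak solution into a linear first-order term with an $L^\infty$ coefficient bounded by $c_4=\|b\|_{C(\overline{\Omega}\times\mathcal{A})}$, so that the uniform invertibility of the operator class $\mathcal{G}(C_0)$ from Lemma~\ref{unif} can be invoked. This is essential: naive testing of \eqref{weaku} with $u$ (or of the difference equation with $u_1-u_2$) does not close when $\|b\|_{C(\overline{\Omega}\times\mathcal{A})}$ is large, and the missing maximum-principle content is exactly what Lemma~\ref{unif} supplies.

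\emph{Uniqueness and the a priori bound.} If $u_1,u_2$ both solve \eqref{weaku} with the same $m$, set $z:=u_1-u_2$ and define $\beta(x):=\frac{H(x,\nabla u_1(x))-H(x,\nabla u_2(x))}{|\nabla z(x)|^2}\nabla z(x)$ where $\nabla z(x)\neq 0$ and $\beta(x):=0$ otherwise. By \eqref{bounds:lipschitz} one has $\|\beta\|_{L^\infty(\Omega;\mathbb{R}^n)}\leq c_4$ and $\beta\cdot\nabla z = H(\cdot,\nabla u_1)-H(\cdot,\nabla u_2)$ a.e.\ (the right-hand side vanishing wherever $\nabla z=0$), so subtracting the two weak formulations shows $z$ solves $Lz=0$ with $L\in\mathcal{G}(c_4+\kappa)$ (transport coefficient $\beta$, zeroth-order coefficient $\kappa\geq 0$), whence $z=0$ by Lemma~\ref{unif}. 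For \eqref{eq:hjb_a_priori_bound}, apply the same device to a single solution $u$ with $H(x,0)$ in place of $H(x,\nabla u_2(x))$: writing $\gamma(x):=\frac{H(x,\nabla u(x))-H(x,0)}{|\nabla u(x)|^2}\nabla u(x)$ (and $0$ where $\nabla u=0$) gives $\|\gamma\|_{L^\infty}\le c_4$, $H(\cdot,\nabla u)=\gamma\cdot\nabla u+H(\cdot,0)$, and $|H(x,0)|=|\inf_{\alpha}f(x,\alpha)|\le\|f\|_{C(\overline{\Omega}\times\mathcal{A})}$, so that $u=L^{-1}(F[m]-H(\cdot,0))$ with $L\in\mathcal{G}(c_4+\kappa)$; then \eqref{F1}, the embedding $L^2(\Omega)\hookrightarrow H^{-1}(\Omega)$, and Lemma~\ref{unif} yield \eqref{eq:hjb_a_priori_bound} with a constant of the asserted form.

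\emph{Existence.} I would apply Schaefer's fixed point theorem to $T\colon H_0^1(\Omega)\to H_0^1(\Omega)$, $T(w):=S_0(F[m]-H(\cdot,\nabla w))$, where $S_0:=(-\nu\Delta+\kappa)^{-1}\colon H^{-1}(\Omega)\to H_0^1(\Omega)$ is the bounded solution operator of the symmetric problem (Lax--Milgram); fixed points of $T$ are precisely the solutions of \eqref{weaku}. Continuity of $T$ is clear since $w\mapsto H(\cdot,\nabla w)$ is Lipschitz from $H^1(\Omega)$ into $L^2(\Omega)\hookrightarrow H^{-1}(\Omega)$. The key point is compactness: although $S_0$ is not compact on $H^{-1}(\Omega)$, I split $T(w)=S_0F[m]+S_0(-H(\cdot,\nabla w))$, note the first summand is independent of $w$, recall that $w\mapsto -H(\cdot,\nabla w)$ maps bounded sets of $H_0^1(\Omega)$ into bounded sets of $L^2(\Omega)$ by the linear growth in \eqref{bounds}, and show $S_0\colon L^2(\Omega)\to H_0^1(\Omega)$ is completely continuous. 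For the latter, endow $H_0^1(\Omega)$ with the equivalent Hilbertian norm $\|v\|_a^2:=\nu\|\nabla v\|_\Omega^2+\kappa\|v\|_\Omega^2$; if $g_j\rightharpoonup g$ in $L^2(\Omega)$ then $S_0g_j\rightharpoonup S_0g$ in $H_0^1(\Omega)$, $S_0g_j\to S_0g$ in $L^2(\Omega)$ (Rellich--Kondrachov), and $\|S_0g_j\|_a^2=\langle g_j,S_0g_j\rangle_{L^2}\to\langle g,S_0g\rangle_{L^2}=\|S_0g\|_a^2$, so $S_0g_j\to S_0g$ in $H_0^1(\Omega)$. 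Hence $T$ is compact. Finally, if $u=\lambda T(u)$ with $\lambda\in[0,1]$, the freezing argument gives $u=L^{-1}(\lambda(F[m]-H(\cdot,0)))$ with $L\in\mathcal{G}(c_4+\kappa)$ (transport coefficient $\lambda\gamma_u$, still bounded by $c_4$), so Lemma~\ref{unif} bounds all such $u$ uniformly in $H_0^1(\Omega)$; Schaefer's theorem then yields a fixed point.

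\emph{Continuous dependence and the main obstacle.} Given $m_j\to m$ in $L^2(\Omega)$, the a priori bound makes the solutions $u_j$ bounded in $H_0^1(\Omega)$, so along a subsequence $u_j\rightharpoonup u_*$ in $H_0^1(\Omega)$ and $u_j\to u_*$ in $L^2(\Omega)$. Testing \eqref{weaku} for $u_j$ with $u_j-u_*$, using \eqref{F2} to get $\langle F[m_j],u_j-u_*\rangle_{H^{-1}\times H_0^1}\to 0$, the $L^2$-boundedness of $H(\cdot,\nabla u_j)$ to get $\int_\Omega H(\cdot,\nabla u_j)(u_j-u_*)\,\mathrm{d}x\to 0$, and weak convergence for the remaining terms, gives $\nu\|\nabla(u_j-u_*)\|_\Omega^2\to 0$, i.e.\ $u_j\to u_*$ strongly in $H_0^1(\Omega)$. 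The Lipschitz continuity of $w\mapsto H(\cdot,\nabla w)$ then permits passing to the limit in \eqref{weaku}, identifying $u_*$ as a weak solution with datum $m$; by uniqueness $u_*=u$, and since every subsequence has a further subsequence converging in $H_0^1(\Omega)$ to $u$, the whole sequence converges. I expect the only genuinely delicate step to be the compactness of $T$ — specifically, the observation that the nonlinearity is $L^2(\Omega)$-valued so that $S_0$ there acts as a compact operator; everything else is a systematic reduction to the uniform estimate of Lemma~\ref{unif}.
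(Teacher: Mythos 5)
Your proof is correct, and its overall architecture is the one the paper points to for this lemma: Schaefer's fixed point theorem for existence, plus a reduction of the nonlinear HJB operator to the frozen linear class $\mathcal{G}(C_0)$ so that the uniform invertibility of Lemma~\ref{unif} delivers uniqueness, the a priori bound \eqref{eq:hjb_a_priori_bound}, and the uniform bound on solutions of $u=\lambda T(u)$ needed for Schaefer. The genuine difference is the linearization device. The paper's route goes through Lemma~\ref{Prop1}, i.e.\ a measurable selection of optimal controls, which yields the exact identity $H(x,\nabla u)=b(x,\alpha^*(x))\cdot\nabla u - f(x,\alpha^*(x))$ with drift $b(\cdot,\alpha^*(\cdot))$ bounded by $\|b\|_{C(\overline{\Omega}\times\mathcal{A})}$, and for differences of solutions uses the one-sided subdifferential inequalities together with the comparison principle. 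You instead build the secant field $\beta=\tfrac{H(\cdot,\nabla u_1)-H(\cdot,\nabla u_2)}{|\nabla z|^2}\nabla z$ (and $\gamma$ against $p=0$), which is measurable, bounded by $c_4$ thanks only to \eqref{bounds:lipschitz}, and reproduces the difference of Hamiltonians exactly, so uniqueness becomes plain injectivity of an operator in $\mathcal{G}(c_4+\kappa)$. Your device is slightly more elementary and more general (no convexity, no measurable-selection theorem); what the selection-based route buys is coherence with the rest of the paper, since the same selections $\tilde{b}\in D_pH[u]$ drive the KFP equation and the MFG existence and uniqueness arguments. Your remaining steps are sound: the splitting $T(w)=S_0F[m]+S_0(-H(\cdot,\nabla w))$ with complete continuity of $S_0\colon L^2(\Omega)\to H_0^1(\Omega)$ proved via weak convergence plus convergence of the $\|\cdot\|_a$-norms, and the continuous-dependence argument by testing with $u_j-u_*$ followed by the subsequence--uniqueness step, are all correct, with constants depending only on the quantities stated in the lemma.
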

The proofs of Lemmas~\ref{unif} and~\ref{thm1} are given in Appendix~\ref{appendix-a} for completeness. 
We can now show existence of weak solutions to \eqref{sys} in the sense of Definition \ref{weakdef} by an application of Kakutani's fixed-point theorem \cite[Ch.\ 9, Theorem 9.B]{ZMR0816732}. Our proof is similar to the proof of \cite[Theorem 3.3]{ducasse2020second}.

\begin{theorem}[Kakutani's fixed point theorem \cite{ZMR0816732}]\label{thm-kakutani}
    Suppose that 
    \begin{enumerate}
        \item $\mathcal{B}$ is a nonempty, compact, convex set in a locally convex space $X$;
        \item $\mathcal{V}:\mathcal{B}\rightrightarrows\mathcal{B}$ is a set-valued map such that $\mathcal{V}[\tilde{b}]$ is nonempty, closed and convex for all $\tilde{b}\in\mathcal{B}$; and
        \item $\mathcal{V}$ is upper semi-continuous.
    \end{enumerate}
    Then $\mathcal{V}$ has a fixed point: there exists $\tilde{b}_{*}\in\mathcal{B}$ such that $\tilde{b}_{*}\in \mathcal{V}[\tilde{b}_{*}]$. 
\end{theorem}
\begin{proof}[Proof of Theorem \ref{thm1-existence}]    
    {Recall that $c_4=\lVert b\rVert_{C(\overline{\Omega}\times\mathcal{A};\mathbb{R}^n)}$ is the Lipschitz constant of the Hamiltonian, c.f.~\eqref{bounds:lipschitz}.
    We equip the space $L^{\infty}(\Omega;\mathbb{R}^n)$ with its weak-$*$ topology, noting that it is then a locally convex topological space. Let $\mathcal{B}$ denote the ball
    $$\mathcal{B}\coloneqq \left\{\tilde{b}\in L^{\infty}(\Omega;\mathbb{R}^n): \|\tilde{b}\|_{L^{\infty}(\Omega;\mathbb{R}^n)}\leq c_4\right\}.$$ 
    We note that $\mathcal{B}$ is nonempty, convex, and also closed in the weak-$*$ topology.
     Since $L^1(\Omega;\mathbb{R}^n)$ is separable, the weak-$*$ topology on $\mathcal{B}$ is metrizable \cite[Ch.\ 15]{royden2018real}. Moreover, Helly's Theorem implies that $\mathcal{B}$ is a compact.

    Let $M: \mathcal{B}\to H_0^1(\Omega)$ be the map defined as follows: for each $\tilde{b}\in \mathcal{B}$, let $M[\tilde{b}]$ in $H_0^1(\Omega)$ be the unique solution of
    \begin{equation}\label{M-map-def}
        \int_{\Omega}\nu\nabla M[\tilde{b}]\cdot\nabla \phi+M[\tilde{b}]\tilde{b}\cdot\nabla \phi+\kappa M[\tilde{b}] \phi\,\mathrm{d}x=\langle G,\phi\rangle_{H^{-1}\times H_0^1}\text{  }\text{ }\forall\phi\in H_0^1(\Omega).
    \end{equation}
    The map $M$ is well-defined thanks to Lemma \ref{unif}. Next, let $U:L^2(\Omega)\to H_0^1(\Omega)$ be the map defined as follows: for each $m\in L^2(\Omega)$, let $U[m]\in H_0^1(\Omega)$ denote the unique solution of
    \begin{equation}\label{U-map-def}
        \int_{\Omega}\nu\nabla U[m]\cdot\nabla \psi+H(x,\nabla U[m])\psi +\kappa U[m] \psi\,\mathrm{d}x=\langle F[m],\psi\rangle_{H^{-1}\times H_0^1} \text{ }\text{ }\forall\psi\in H_0^1(\Omega).
    \end{equation}
    The map $U$ is well-defined by Lemma \ref{thm1}. 

    Now, define the set-valued map $\mathcal{V}:\mathcal{B}\rightrightarrows L^{\infty}(\Omega;\mathbb{R}^n)$ as follows: given $\tilde{b}\in \mathcal{B}$, let 
    $$\mathcal{V}[\tilde{b}]\coloneqq  D_pH\big[U\big[M\big[\tilde{b}\big]\big]\big].$$ Lemma \ref{Prop1} implies that $\mathcal{V}[\tilde{b}]\subset\mathcal{B}$ for each $\tilde{b}\in\mathcal{B}$, so $\mathcal{V}:\mathcal{B}\rightrightarrows \mathcal{B}$. Moreover, for every $\tilde{b}\in \mathcal{B}$, the set $\mathcal{V}[\tilde{b}]$ is nonempty and convex. Indeed, for each $\tilde{b}\in\mathcal{B}$ the set $\mathcal{V}[\tilde{b}]$ is nonempty by Lemma \ref{Prop1}. Also, $\mathcal{V}[\tilde{b}]$ is convex since $\partial_pH$ has convex images. Moreover, $\mathcal{V}[\tilde{b}]$ is closed for all $\tilde{b}\in\mathcal{B}$ thanks to Lemma~\ref{inclusion}. 
    
    The existence of a weak solution to the MFG PDI \eqref{sys} in the sense of Definition \ref{weakdef} is equivalent to showing the existence of a fixed point of $\mathcal{V}$, i.e.\ that there exists $\tilde{b}_*\in \mathcal{B}$ such that $\tilde{b}_*\in\mathcal{V}[\tilde{b}_*].$ Indeed, if $\tilde{b}_*\in \mathcal{B}$ satisfies $\tilde{b}_*\in\mathcal{V}[\tilde{b}_*]$ then a solution pair $(u,m)$ of the weak formulation \eqref{weakform} is given by $m\coloneqq M[\tilde{b}_*]$ and $u\coloneqq U[m]$ with $\tilde{b}_*\in D_pH[u]$, while the converse is obvious.
    
    We now verify that $\mathcal{V}$ is upper-semicontinuous. To this end, it suffices to prove that the graph of $\mathcal{V}$ is closed; c.f.\ \cite[Ch.\ 1, Corollary 1, p.\ 42]{MR0755330}.  Let $\mathcal{W}$ denote the graph of $\mathcal{V}$, which is defined by 
    \begin{equation}\label{graph-def}
        \mathcal{W}\coloneqq \left\{(\tilde{b},\overline{b})\in \mathcal{B}\times\mathcal{B}:\overline{b}\in\mathcal{V}[\tilde{b}]\right\}.
    \end{equation}
    Since $\mathcal{B}$ is metrizable, to show that the graph $\mathcal{W}$ is a closed it is enough to show that whenever a sequence $\{(\tilde{b}_i,\overline{b}_i)\}_{i\in\mathbb{N}}\subset \mathcal{W}$ converges weakly-$*$ in $\mathcal{B}\times\mathcal{B}$ to a point $(\tilde{b},\overline{b})$ as $i\to\infty$, then $(\tilde{b},\overline{b})\in\mathcal{W}$,  which is to say $\overline{b}\in \mathcal{V}[\tilde{b}]$. Let us then suppose that we are given a sequence $\{(\tilde{b}_i,\overline{b}_i)\}_{i\in\mathbb{N}}\subset \mathcal{W}$ converges weakly-$*$ in $\mathcal{B}\times\mathcal{B}$ to a point $(\tilde{b},\overline{b})$ as $i\to\infty$. To begin, we claim that $M[\tilde{b}_i]\to M[\tilde{b}]$ in $L^2(\Omega)$ as $i\to\infty$. Indeed, since $\{\tilde{b}_i\}_{i\in\mathbb{N}}\subset\mathcal{B}$, for each $i\in\mathbb{N}$ we apply Lemma \ref{unif} to obtain the uniform bound
    \begin{equation}\label{M_n}
        \sup_{i\in\mathbb{N}}\|M[\tilde{b}_i]\|_{H^1(\Omega)}\leq C_1\|G\|_{H^{-1}(\Omega)}.
    \end{equation}
    We deduce from~\eqref{M_n} that any given subsequence $\{M[\tilde{b}_{i_j}]\}_{j\in\mathbb{N}}$ is uniformly bounded in $H_0^1(\Omega)$. The Rellich--Kondrachov compactness theorem then implies that there exists a further subsequence $\{M[\tilde{b}_{i_{j_s}}]\}_{s\in\mathbb{N}}$ and $m\in H_0^1(\Omega)$ such that $M[\tilde{b}_{i_{j_s}}]\rightharpoonup m$ in $H_0^1(\Omega)$ and $M[\tilde{b}_{i_{j_s}}]\to m$ in $L^2(\Omega)$ as $s\to\infty$. By $L^{\infty}$-weak-$*$ $\times$ $L^2$-strong convergence, we also have that $M[\tilde{b}_{i_{j_s}}]\tilde{b}_{i_{j_s}}\rightharpoonup m\tilde{b}$ in $L^2(\Omega;\mathbb{R}^n)$ as $s\to\infty$. Passing to the limit in the KFP equation \eqref{M-map-def} satisfied by $M[\tilde{b}_{i_{j_s}}]$ for $s\in\mathbb{N}$, we deduce that $m$ satisfies 
    \begin{equation}\label{m-alt-eqn}
        \int_{\Omega}\nu\nabla m\cdot\nabla \phi+m\tilde{b}\cdot\nabla \phi+\kappa m \phi\,\mathrm{d}x=\langle G,\phi\rangle_{H^{-1}\times H_0^1}\text{  }\text{ }\forall\phi\in H_0^1(\Omega).
    \end{equation}
    But by definition of $M[\tilde{b}]$ in \eqref{M-map-def}, we obtain necessarily $m=M[\tilde{b}]$ in $H_0^1(\Omega)$. 
    In summary, every subsequence of $\{M[\tilde{b}_i]\}_{i\in \mathbb{N}}$ has a further subsequence that converges to $M[\tilde{b}]$ in $L^2(\Omega)$ as $i\to \infty$.
    It follows that the entire sequence $\{M[\tilde{b}_i]\}_{i\in\mathbb{N}}$ satisfies $M[\tilde{b}_i]\to M[\tilde{b}]$ in $L^2(\Omega)$ as $i\to\infty$. Lemma \ref{thm1} then implies that $U[M[\tilde{b}_i]]\to U[M[\tilde{b}]]$ in $H_0^1(\Omega)$ as $i\to\infty$. By hypothesis, $\overline{b}_i\in \mathcal{V}[\tilde{b}_i]=D_pH[U[M[\tilde{b}_i]]]$ for $i\in\mathbb{N}$ and $\overline{b}_i\rightharpoonup^* \overline{b}$ in $L^{\infty}(\Omega;\mathbb{R}^n)$ as $i\to\infty$. We conclude from Lemma~\ref{inclusion} that $\overline{b}\in D_pH[U[M[\tilde{b}]]]$, i.e.\ $\overline{b}\in \mathcal{V}[\tilde{b}]$. We have therefore shown that $\mathcal{W}$ is closed, so $\mathcal{V}$ is upper semi-continuous.
    
    We have thus shown that the map $\mathcal{V}:\mathcal{B}\rightrightarrows\mathcal{B}$ satisfies the conditions of Kakutani's fixed-point theorem, so $\mathcal{V}$ admits a fixed point and therefore there exists a weak solution to \eqref{sys} in the sense of Definition \ref{weakdef}.} 

    The bound \eqref{continuous-estimates-m} follows directly from the KFP equation satisfied by the density function $m$ in the weak formulation \eqref{weakform} and an application of Lemma \ref{unif}. The bound \eqref{continuous-estimates-u} then follows from \eqref{continuous-estimates-m} and \eqref{eq:hjb_a_priori_bound}.
\end{proof}

\subsection{Uniqueness of Weak Solutions} 
Using Definition \ref{Def1}, in addition to a strict monotonicity condition on $F$ and the nonnegativity of $G$ in the sense of distributions in $H^{-1}(\Omega)$, we obtain uniqueness of weak solutions in the sense of Definition \ref{weakdef} through a monotonicity argument similar to~\cite{lasry2007mean}.
\begin{proof}[Proof of Theorem \ref{thm2-uniqueness}]
	Suppose that there exist $(u_i,m_i)$, $i\in\{1,2\}$, that each satisfy \eqref{weakform} with
	\begin{subequations}\label{eq:uniqueness_1}
		\begin{align}
			\int_{\Omega}\nu\nabla u_i\cdot\nabla \psi +H(x,\nabla u_i)\psi +\kappa u_i \psi\, \mathrm{d}x &=\langle F[m_i],\psi\rangle_{H^{-1}\times H_0^1} &&\forall \psi\in H^1_0(\Omega), \label{eq:uniqueness_1a}\\
			\int_{\Omega}\nu\nabla m_i\cdot\nabla \phi+m_i\tilde{b}_{i}\cdot\nabla \phi  +\kappa m_i\phi \,\mathrm{d}x &=\langle G,\phi\rangle_{H^{-1}\times H_0^1} &&\forall \phi\in H_0^1(\Omega),\label{eq:uniqueness_1b}
		\end{align}
	\end{subequations}
	for some $\tilde{b}_i\in D_p H[u_i] $.
	Since  $G$ is nonnegative in the sense of distributions in $H^{-1}(\Omega)$, the Comparison Principle (see the proof of~\cite[Theorem~10.7]{gilbarg2015elliptic}) applied to~\eqref{eq:uniqueness_1b} implies that $m_i\geq 0 $ a.e.\ in $\Omega$ for each $i\in\{1,2\}$.
	After choosing as test functions $\psi=m_1-m_2$ and $\phi=u_1-u_2$ in~\eqref{eq:uniqueness_1}, and subtracting the equations, we eventually find that
	\begin{equation}\label{7'} 
		\int_{\Omega}m_1\lambda_{12}+m_2\lambda_{21}\,\mathrm{d}x= \langle F[m_1]-F[m_2],m_1-m_2\rangle_{H^{-1}\times H_0^1},
	\end{equation}
	where the functions $\lambda_{ij}$ are defined by
	\begin{equation}
		\lambda_{ij}:=H(\cdot,\nabla u_i) - H(\cdot,\nabla u_j)+ \tilde{b}_{i}\cdot\nabla (u_j-u_i),\quad i,\,j\in\{1,2\}.
	\end{equation}
	By definition of $D_pH[u_i]$, in particular that $\tilde{b}_i (x) \in \partial_p H(x,\nabla u_i(x))$ for a.e.\ $x\in \Omega$, we see that $\lambda_{ij}\leq 0$ a.e.\ in $\Omega$ for $i,j\in\{1,2\}$. 
	Therefore, \eqref{7'} implies that $\langle F[m_1]-F[m_2],m_1-m_2\rangle_{H^{-1}\times H_0^1}\leq 0$, and thus the strict monotonicity condition~\eqref{eq:F_monotone} on $F$ implies that $m_1=m_2$.
	Consequently $u_1$ and $u_2$ satisfy \eqref{eq:uniqueness_1a} with identical right-hand side $F[m_1]=F[m_2]$.
	Therefore, Lemma \ref{thm1} implies that $u_1=u_2$.
	This shows that there is at most one weak solution to \eqref{sys} in the sense of Definition \ref{weakdef}.
\end{proof}

\begin{remark}
	\emph{In cases where the MFG PDI \eqref{weakform} admits a unique solution $(u,m)$, the collection of transport vectors $\tilde{b}_*\in D_pH[u]$ for which \eqref{weakform} holds constitutes an equivalence class of vector fields under the following equivalence relation: for vector fields $\tilde{b}_1,\tilde{b}_{2}\in D_pH[u]$, $$\tilde{b}_1\sim \tilde{b}_{2} \quad \text{if and only if} \quad\int_{\Omega}m\tilde{b}_1\cdot\nabla \phi\,\mathrm{d}x=\int_{\Omega}m\tilde{b}_{2}\cdot\nabla \phi\,\mathrm{d}x\quad\forall\phi\in H_0^1(\Omega).$$ Therefore, whenever $\tilde{b}_1,\tilde{b}_2$ are in the above equivalence class, $\text{div}(m\tilde{b}_1-m\tilde{b}_2)=0$ in the sense of distributions in $H^{-1}(\Omega)$. For instance, in three space dimensions, this implies that the vector $m\tilde{b}_1-m\tilde{b}_2$ is the curl of a vector potential in a distributional sense.}
\end{remark}

\section{Monotone Continuous Galerkin Finite Element Scheme}\label{Sec5}

In this section we introduce a monotone finite element scheme for approximating solutions to the weak formulation \eqref{weakform}. In the sequel,  we shall further assume that $\Omega$ is a polyhedron, in addition to the earlier assumption that it is a bounded connected open set with Lipschitz boundary.

\subsection{Notation}

A mesh $\mathcal{T}$ is a collection of closed $n$-dimensional simplices, called elements, $K$ with nonoverlapping interiors that satisfy
$\overline{\Omega}=\bigcup_{K\in\mathcal{T}}K.$ Each vertex of an element $K$ of a given mesh is called a node of the element. A face of an element $K\in\mathcal{T}$ is the convex hull of a collection of $n$ nodes of $K$ which has positive $(n-1)$-dimensional Hausdorff measure (c.f.  \cite{di2011mathematical}). For instance, when the space dimension $n=2$, each face of an element $K\in\mathcal{T}$ is one of its three edges. 
We will always assume that a given mesh $\mathcal{T}$ is conforming (or often called matching) \cite{di2011mathematical}, i.e.\ for any element $K\in\mathcal{T}$ with nodes $\{x_0,\cdots,x_n\}$, the set $\partial K\cap \partial K'$ for each element $K'\in\mathcal{T}$, $K'\neq K$, is the convex hull of a (possibly empty) subset of $\{x_0,\cdots,x_n\}$. 
Let $\{\mathcal{T}_k\}_{k\in\mathbb{N}}$ be a given sequence of conforming meshes. For each $k\in\mathbb{N}$, let the mesh-size of a given mesh $\mathcal{T}_k$ be defined by $h_k\coloneqq \max_{K\in\mathcal{T}_k}{\text{diam}(K)}$. We assume that $h_k\to 0$ as $k\to \infty$. We assume that $\{\mathcal{T}_k\}_{k\in\mathbb{N}}$ is shape-regular, i.e.\ there exists a real-number $\delta>1$, independent of $k\in\mathbb{N}$, such that $\forall k\in\mathbb{N},\,\forall K\in\mathcal{T}_k,\text{ }\text{diam}(K)\leq \delta \rho_{K},$ where $\rho_K$ denotes the radius of the largest inscribed ball in the element $K$. We assume in addition that the family of meshes $\{\mathcal{T}_k\}_{k\in\mathbb{N}}$ is nested, i.e.\ for each $k\in\mathbb{N}$ the mesh $\mathcal{T}_{k+1}$ is obtained from $\mathcal{T}_k$ via an admissible subdivision of each element of $\mathcal{T}_k$ into simplices.

Given an element $K$, we let $\mathcal{P}_1(K)$ denote the vector space of $n$-variate real-valued polynomials of total degree 1 that are defined on $K$. 
The discretization of the continuous problem \eqref{weakform} is based on the following finite element spaces: 
$$V_k\coloneqq \{v\in H_0^1(\Omega): v|_K\in\mathcal{P}_1(K)\text{ }\forall K\in \mathcal{T}_k\}\quad\forall k\in\mathbb{N}.$$
Given $k\in\mathbb{N}$, the space $V_k$ admits a unique nodal basis of hat functions that we denote by $\{\xi_1,\cdots,\xi_{N_k}\}$, which corresponds to a maximal collection of nodes $\{x_1,\cdots,x_{N_k}\}$ of the mesh $\mathcal{T}_k$, such that $\xi_i (x_j)=\delta_{ij}$ for $i,\,j\in\{1,\cdots,N_k\}$, where $\delta_{ij}$ is the Kronecker delta.
Moreover, $V_k$ inherits the standard norm on $H_0^1(\Omega)$ and we denote this norm by $\|\phi\|_{V_k}\coloneqq \|\phi\|_{H^1(\Omega)}$ for $\phi\in V_k$. Note that, due to nestedness of the sequence of meshes $\{\mathcal{T}_k\}_{k\in\mathbb{N}}$, $V_k$ is a closed subspace of $V_{k+1}$ for each $k\in\mathbb{N}$. In addition, the union $\bigcup_{k\in\mathbb{N}}V_k$ is dense in $H_0^1(\Omega)$. We let $V_k^*$ denote the space of continuous linear functionals on $V_k$ with standard norm denoted by $\|\cdot\|_{V_k^*}$.  For any operator $\mathcal{Z}:V_k\to V_k^*$ we define the adjoint operator $\mathcal{Z}^*:V_k\to V_k^*$ by 
$\langle \mathcal{Z}^*w, v\rangle_{V_k^*\times V_k}\coloneqq\langle \mathcal{Z} v, w\rangle_{V_k^*\times V_k}$ for all $w,v\in V_{k}$. 

Let $k\in\mathbb{N}$ be given. For $K\in\mathcal{T}_k$, we denote by $\{\psi_{k,0}^K,\cdots, \psi_{k,n}^{K}\}\subset V_{k}$ the set of nodal basis functions associated with the $n+1$ nodes of $K$ and let
\begin{equation}\label{sigmaK}
	\sigma_{K}^{k}\coloneqq \text{diam}(K)\min_{0\leq i\leq n}\left|\nabla \psi_{k,i}^K\right|, \quad \sigma^{k}\coloneqq \min_{K\in\mathcal{T}_k}\sigma_K^{k}.
\end{equation}
We note that, owing to the shape regularity of the family of meshes $\{\mathcal{T}_k\}_{k\in\mathbb{N}}$, there exist constants $\underline{\sigma},\overline{\sigma}>0$, independent of $k\in\mathbb{N}$, such that
$$\underline{\sigma}\leq\sigma^{k}\leq \overline{\sigma}\quad\forall \,k\in\mathbb{N}.$$

We assume in addition, for Subsection~\ref{sec:scheme}, Subsection~\ref{sec:stab} and Section~\ref{Sec6}, that the family of meshes $\{\mathcal{T}_k\}_{k\in\mathbb{N}}$ is \emph{strictly acute} \cite{burman2002nonlinear} in the following sense: there exists $\theta\in (0,\pi/2)$, independent of $k\in\mathbb{N}$, such that, for each $k\in\mathbb{N}$, the nodal basis $\{\xi_1,\cdots,\xi_{N_k}\}$ of $V_k$ satisfies
\begin{equation}\label{acute}
	\nabla \xi_{i}\cdot\nabla \xi_j|_K\leq -\sin(\theta)\left|\nabla \xi_i|_K\right| \left|\nabla \xi_j|_K\right|\quad \forall 1\leq i,j\leq N_k, i\neq j,\forall K\in\mathcal{T}.
\end{equation}
The condition \eqref{acute} can be interpreted geometrically. For instance, in two space dimensions the strict acuteness condition \eqref{acute} indicates that the largest angle of a given triangle $K\in\mathcal{T}_k$ is at most $\frac{\pi}{2}-\theta$, while in three space dimensions \eqref{acute} indicates that each angle formed by the six pairs of faces of any tetrahedron $K\in\mathcal{T}_k$ is at most $\frac{\pi}{2}-\theta$ (see \cite{burman2002nonlinear}).

\subsection{A Monotone Finite Element Method}\label{sec:scheme}
The proof of uniqueness of weak solutions of \eqref{weakform} uses the Comparison Principle of elliptic operators (see Section~\ref{Sec 4}). In order to preserve this approach on the discrete level, we consider here approximations by a monotone finite element method that satisfies a discrete maximum principle (see, e.g., \cite{ciarlet1973maximum}). As such, we will consider a finite element discretization of \eqref{weakform} that employs the method of artificial diffusion on strictly acute meshes \cite{burman2002nonlinear,JensenSmears2013} to ensure nonnegativity of the approximations for the density.

We introduce a family of artificial diffusion coefficients that will be used in the finite element discretization of \eqref{weakform}. Let $\mu>1$ be a fixed constant.
Then, for each $k\in\mathbb{N}$, we define the artificial diffusion coefficient $\gamma_k:\Omega\to\mathbb{R}$ element-wise over $\mathcal{T}_k$ by 
\begin{equation}\label{eta-iota}
	\gamma_k|_{K}\coloneqq 
	\max\left(\mu\frac{\|b\|_{C(\overline{\Omega}\times\mathcal{A};\mathbb{R}^n)}\text{diam}(K)+\kappa\text{diam}(K)^2}{\sigma^{k}\sin(\theta)}-\nu,0\right)\quad \forall K\in\mathcal{T}_k.
\end{equation}

With the artificial diffusion coefficients $\{\gamma_k\}_{k\in\mathbb{N}}$ given by \eqref{eta-iota}, the P1-continuous Galerkin finite element discretization of \eqref{weakform} that we consider is the following:
\emph{Given $k\in\mathbb{N}$, find $(u_k,m_k)\in V_{k}\times V_{k}$ such that  }\emph{there exists} $\tilde{b}_k\in D_p H[u_k]$ \emph{satisfying} 
\begin{subequations}\label{weakdisc}
	\begin{align}
		\int_{\Omega}(\nu+\gamma_k)\nabla u_k\cdot\nabla \psi+H(x,\nabla u_k)\psi+\kappa u_k\psi\,\mathrm{d}x &=\langle F[m_k],\psi\rangle_{H^{-1}\times H_0^1}&&\forall \psi\in V_k, \label{weakformdisc1}
		\\
		\int_{\Omega}(\nu+\gamma_k)\nabla m_k\cdot\nabla \phi+m_k\tilde{b}_k\cdot\nabla \phi+\kappa m_k\phi\,\mathrm{d}x &=\langle G,\phi\rangle_{H^{-1}\times H_0^1}&&\forall\phi\in V_k. \label{weakformdisc2}
	\end{align}
\end{subequations}

\begin{remark}[Basic Properties of Artificial Diffusion Coefficients]\label{art-diff-coeff-properties}
	\emph{We observe some key properties of the artificial diffusion coefficients given by \eqref{eta-iota}. Firstly, for each $k\in\mathbb{N}$, $\gamma_k$ is in $L^{\infty}(\Omega)$, constant element-wise, and nonnegative a.e.\ in $\Omega$. Moreover, there holds $\sup_{k\in\mathbb{N}}\|\gamma_k\|_{L^{\infty}(\Omega)}<\infty$  due to shape regularity of the family $\{\mathcal{T}_k\}_{k\in\mathbb{N}}$. Secondly, since the sequence of mesh sizes $\{h_k\}_{k\in\mathbb{N}}$ satisfies $h_k\to 0$ as $k\to\infty$ by assumption, there exists $k_{*}\in\mathbb{N}$ that depends on $\nu$, $\mu$, $\theta$, $\kappa$ and $\|b\|_{C(\overline{\Omega}\times\mathcal{A};\mathbb{R}^n)}$ such that, for all $k\geq k_{*}$, we have $\gamma_k=0$ a.e.\ in $\Omega$. Hence,
	\begin{equation}\label{vanishing-art-diff}
		\|\gamma_k\|_{L^{\infty}(\Omega)}=0\quad \forall \,k\geq k_*,
	\end{equation}
	and thus we recover consistency of the discrete problems \eqref{weakdisc} with the continuous problem \eqref{weakform} in the limit as the mesh-size vanishes.
	More generally, it is well-known that the inclusion of artificial diffusion provides sufficient, but not always necessary, conditions for obtaining a discrete maximum principle.}
\end{remark}

\subsection{Main Results}\label{sec:stab}
We can now state the main results concerning the finite element scheme \eqref{weakdisc}. The first result concerns the existence of solutions to \eqref{weakdisc}.
\begin{theorem}[Existence]\label{Full'}
	For each $k\in\mathbb{N}$ there exists a discrete solution pair $(u_k,m_k)\in V_k\times V_k$ that solves \eqref{weakdisc}. Moreover, there exist constants $C_1^*,C_2^*\geq 0$, independent of $k\in\mathbb{N}$, such that
	\begin{subequations}\label{uniform_boundss}
		\begin{align}
			\sup_{k\in\mathbb{N}}\|m_k\|_{H^1(\Omega)}&\leq C_1^*\|G\|_{H^{-1}(\Omega)},\label{mk-unif-bound}
			\\ \sup_{k\in\mathbb{N}}\|u_k\|_{H^1(\Omega)}&\leq C_2^{*}\left(\|G\|_{H^{-1}(\Omega)}+\|f\|_{C(\overline{\Omega}\times\mathcal{A})}+1\right).\label{uk-unif-bound}
		\end{align}
	\end{subequations}
\end{theorem}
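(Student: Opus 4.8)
The plan is to transcribe the fixed‑point argument from the proof of Theorem~\ref{Full} into the finite‑dimensional spaces $V_k$, with the constant principal coefficient $\nu$ replaced by the element‑wise constant coefficient $\nu+\gamma_k$. For this, two auxiliary results are needed, which are the discrete counterparts of Lemma~\ref{unif} and Lemma~\ref{thm1}: \emph{(i)} a uniform‑in‑$k$ bounded‑invertibility estimate for the discrete linear operators $L_k\colon V_k\to V_k^*$ obtained by restricting to $V_k\times V_k$ operators of the form \eqref{eq:wmp_1} with coefficients as in \eqref{eq:wmp_2}, except that the constant $\nu$ is replaced by the piecewise‑constant coefficient $\nu+\gamma_k$; and \emph{(ii)} well‑posedness of the discrete HJB equation \eqref{weakformdisc1} for given $m\in L^2(\Omega)$, together with a $k$‑independent a priori bound and continuous dependence of $u_k$ on $m$. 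Granting \emph{(i)} and \emph{(ii)}, the existence of a solution pair and the bounds \eqref{mk-unif-bound}--\eqref{uk-unif-bound} follow along the lines of the proof of Theorem~\ref{Full}.

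For \emph{(ii)}, existence of $u_k\in V_k$ solving \eqref{weakformdisc1} follows by the same Schaefer fixed‑point argument as in Lemma~\ref{thm1}, now on the finite‑dimensional $V_k$ (so the relevant map is automatically compact) and with Lemma~\ref{unif} replaced by \emph{(i)}; the a priori bound on the homotopy is obtained by writing $H(x,\nabla u_k(x))=\tilde b(x)\cdot\nabla u_k(x)+r(x)$ for the selection $\tilde b\in D_pH[u_k]$, $\tilde b(x)=b(x,\alpha^*(x))$ with $\alpha^*\in\Lambda[u_k]$, as in the proof of Lemma~\ref{Prop1}, so that $r(x)=-f(x,\alpha^*(x))$ satisfies $\lVert r\rVert_{L^\infty(\Omega)}\leq\lVert f\rVert_{C(\overline{\Omega}\times\mathcal{A})}$ and $u_k$ solves the discrete linear problem $\langle L_ku_k,\phi\rangle=\langle F[m]-r,\phi\rangle$ for all $\phi\in V_k$; then \emph{(i)} and \eqref{F1} give $\lVert u_k\rVert_{H^1(\Omega)}\leq C\bigl(\lVert m\rVert_\Omega+\lVert f\rVert_{C(\overline{\Omega}\times\mathcal{A})}+1\bigr)$ with $C$ independent of $k$, which is the sought a priori bound. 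Uniqueness of $u_k$ follows from the discrete comparison principle, which holds because strict acuteness \eqref{acute} and the artificial diffusion \eqref{eta-iota} make the scheme monotone; continuous dependence on $m$ then follows by combining this bound with the Lipschitz property \eqref{F2} of $F$ and uniqueness, exactly as in Lemma~\ref{thm1}.

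For \emph{(i)}, each individual $L_k$ is invertible because, expressed in the nodal basis of $V_k$, strict acuteness \eqref{acute} and the choice \eqref{eta-iota} of $\gamma_k$ render the associated stiffness--convection--reaction matrix an $M$‑matrix. The uniform bound I would obtain by a contradiction‑and‑compactness argument mimicking the one behind Lemma~\ref{unif}: assuming $\lVert L_{k_j}^{-1}\rVert\to\infty$ along some sequence, pick $w_j\in V_{k_j}$ with $\lVert w_j\rVert_{H^1(\Omega)}=1$ and $\lVert L_{k_j}w_j\rVert_{V_{k_j}^*}\to 0$; by Banach--Alaoglu and Rellich--Kondrachov, along a subsequence $w_j\rightharpoonup w$ in $H_0^1(\Omega)$, $w_j\to w$ in $L^2(\Omega)$, and the coefficients converge weak‑$*$ in $L^\infty$. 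Testing $L_{k_j}w_j$ against $w_j-\Pi_{k_j}w$, where $\Pi_{k_j}$ is a quasi‑interpolation operator onto $V_{k_j}$, and using that $\gamma_k$ vanishes for $k\geq k_*$ (Remark~\ref{art-diff-coeff-properties}), that $\bigcup_kV_k$ is dense in $H_0^1(\Omega)$, and that the spaces are nested, one upgrades to strong convergence $w_j\to w$ in $H_0^1(\Omega)$, so $\lVert w\rVert_{H^1(\Omega)}=1$; passing to the limit in $\langle L_{k_j}w_j,\phi\rangle=0$ over $\phi\in\bigcup_kV_k$ shows that $w$ solves $L_\infty w=0$ for some $L_\infty$ in the continuous class of Lemma~\ref{unif}, whence $w=0$ by Lemma~\ref{unif}, a contradiction; the class being closed under the formal adjoint, the same reasoning handles $L_k^*$. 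This step is the crux: the discrete operators are convection‑dominated and non‑coercive, so even their invertibility hinges essentially on the monotone structure furnished by strict acuteness and the artificial diffusion, and the uniformity in $k$ requires care because the spaces $V_k$ themselves vary.

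Finally, with \emph{(i)} and \emph{(ii)} available, one builds for fixed $k$ an iterative sequence as in the proof of Theorem~\ref{Full}: starting from an arbitrary $m_k^{(1)}\in V_k$, let $u_k^{(j)}\in V_k$ solve \eqref{weakformdisc1} with datum $m_k^{(j)}$, choose $\tilde b_k^{(j)}\in D_pH[u_k^{(j)}]$ via Lemma~\ref{Prop1}, and let $m_k^{(j+1)}\in V_k$ solve \eqref{weakformdisc2} with coefficient $\tilde b_k^{(j)}$. By \emph{(i)}, $\lVert m_k^{(j+1)}\rVert_{H^1(\Omega)}\leq C\lVert G\rVert_{H^{-1}(\Omega)}$ uniformly in $j$ and $k$, while \eqref{Hsubdiff-bound} and \emph{(ii)} bound $\{\tilde b_k^{(j)}\}_j$ in $L^\infty(\Omega;\mathbb{R}^n)$ and $\{u_k^{(j)}\}_j$ in $V_k$; since $V_k$ is finite‑dimensional, a subsequence has $m_k^{(j)}\to m_k$ and $u_k^{(j)}\to u_k$ in $V_k$ and $\tilde b_k^{(j)}\rightharpoonup^*\tilde b_k$ in $L^\infty(\Omega;\mathbb{R}^n)$. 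Passing to the limit in \eqref{weakdisc}---using that $v\mapsto H(\cdot,\nabla v)$ is Lipschitz from $H^1(\Omega)$ to $L^2(\Omega)$, that $\nabla\phi\in L^\infty(\Omega;\mathbb{R}^n)$ for $\phi\in V_k$, and \eqref{F2}---and invoking Lemma~\ref{inclusion} to conclude $\tilde b_k\in D_pH[u_k]$, one obtains that $(u_k,m_k)$ solves \eqref{weakdisc}; then \eqref{mk-unif-bound} is the bound just displayed and \eqref{uk-unif-bound} follows by feeding it into the estimate from \emph{(ii)}.
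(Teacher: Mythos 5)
Your proposal is correct and follows essentially the same route as the paper: you establish the two discrete auxiliary results that the paper isolates as Lemma~\ref{unifdiscrete} (uniform invertibility via the discrete maximum principle from strict acuteness and artificial diffusion, plus a contradiction--compactness argument that falls back on Lemma~\ref{unif} in the limit, using nestedness, density of $\bigcup_k V_k$ and the vanishing of $\gamma_k$) and Lemma~\ref{thm1'} (discrete HJB well-posedness via Schaefer and the selection argument of Lemma~\ref{Prop1}), and then transcribe the iteration-and-compactness proof of Theorem~\ref{Full} into $V_k$, obtaining \eqref{mk-unif-bound} from the uniform inverse bound applied to \eqref{weakformdisc2} and \eqref{uk-unif-bound} by feeding it into the discrete HJB a priori estimate, exactly as the paper does. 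The only deviations are minor technical variants within the same argument (normalizing $\lVert w_j\rVert_{H^1}=1$ and upgrading to strong $H^1$ convergence via a quasi-interpolant, where the paper normalizes in $L^2$, uses a G\r{a}rding inequality and treats $L^*$ first, deducing the bound for $L$ by duality; and using finite-dimensionality of $V_k$ in place of the continuous-dependence step of Lemma~\ref{thm1'}), none of which affects the substance.
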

Next, uniqueness of solutions to \eqref{weakdisc} holds under the same monotonicity and nonnegativity assumptions of Theorem \ref{thm2-uniqueness}. 
\begin{theorem}[Uniqueness]\label{thm2''} 
	Suppose that the coupling term $F$ is strictly monotone and $G$ is nonnegative in the sense of distributions in $H^{-1}(\Omega)$. Then, for each $k\in\mathbb{N}$, there exists a unique discrete solution pair $(u_k,m_k)\in V_k\times V_k$ to \eqref{weakdisc}.
\end{theorem}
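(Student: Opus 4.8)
The plan is to mirror the proof of Theorem~\ref{thm2'} at the discrete level, the key point being that the monotone finite element scheme on strictly acute meshes preserves the structural ingredients used in the continuous uniqueness argument: a comparison principle (to get non-negativity of the density approximation) and the subdifferential inequality (to control the Hamiltonian cross-terms). First I would suppose that $(u_k^{(1)},m_k^{(1)})$ and $(u_k^{(2)},m_k^{(2)})$ are two solutions of \eqref{weakdisc} with associated vector fields $\tilde b_k^{(i)}\in D_pH[u_k^{(i)}]$, $i\in\{1,2\}$. The discrete bilinear form appearing on the left-hand side of \eqref{weakformdisc2} corresponds, via the nodal basis, to a matrix of the form $\nu A + A^\gamma + B + \kappa M$, where $A$ is the standard stiffness matrix, $A^\gamma$ the artificial-diffusion stiffness matrix, $B$ the (nonsymmetric) advection matrix with coefficient $\tilde b_k^{(i)}$, and $M$ the mass matrix. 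The strict acuteness condition \eqref{acute} together with the choice of artificial diffusion \eqref{eta-iota} ensures — exactly as in~\cite{JensenSmears2013,burman2002nonlinear} — that this matrix has non-positive off-diagonal entries and is strictly diagonally dominant with positive diagonal, hence is an M-matrix with non-negative inverse. Consequently the discrete operator $\phi\mapsto\langle\cdot,\phi\rangle$ defined by \eqref{weakformdisc2} satisfies a discrete maximum principle, and since $G$ is non-negative in the sense of distributions, we conclude $m_k^{(i)}\geq 0$ on $\overline\Omega$ (equivalently, at all nodes, hence everywhere by linearity on each simplex) for $i\in\{1,2\}$.

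Next I would take the test function $\psi = m_k^{(1)}-m_k^{(2)} \in V_k$ in the difference of the two copies of \eqref{weakformdisc1}, and $\phi = u_k^{(1)}-u_k^{(2)}\in V_k$ in the difference of the two copies of \eqref{weakformdisc2}, and subtract. The symmetric terms $\nu\nabla u\cdot\nabla\psi$, $\gamma_k\nabla u\cdot\nabla\psi$, $\kappa u\psi$ cancel against their counterparts from the second equation — note this is precisely where it matters that the \emph{same} artificial diffusion coefficient $\gamma_k$ and the \emph{same} $\kappa$ appear in both \eqref{weakformdisc1} and \eqref{weakformdisc2}, so the cancellation goes through verbatim as in the continuous case. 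What remains, after rearrangement, is the discrete analogue of~\eqref{7'}:
\begin{equation*}
\int_{\Omega} m_k^{(1)}\lambda_{12} + m_k^{(2)}\lambda_{21}\,\mathrm{d}x = \langle F[m_k^{(1)}]-F[m_k^{(2)}],\, m_k^{(1)}-m_k^{(2)}\rangle_{H^{-1}\times H_0^1},
\end{equation*}
where $\lambda_{ij} := H(\cdot,\nabla u_k^{(i)}) - H(\cdot,\nabla u_k^{(j)}) + \tilde b_k^{(i)}\cdot\nabla(u_k^{(j)}-u_k^{(i)})$. By the defining property of the subdifferential, $\tilde b_k^{(i)}(x)\in\partial_pH(x,\nabla u_k^{(i)}(x))$ a.e., so $\lambda_{ij}\leq 0$ a.e.\ in $\Omega$; combined with $m_k^{(i)}\geq 0$ a.e., the left-hand side is $\leq 0$. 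Hence $\langle F[m_k^{(1)}]-F[m_k^{(2)}], m_k^{(1)}-m_k^{(2)}\rangle_{H^{-1}\times H_0^1}\leq 0$, and since $m_k^{(1)},m_k^{(2)}\in V_k\subset H_0^1(\Omega)$, strict monotonicity of $F$ forces $m_k^{(1)}=m_k^{(2)}$.

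Finally, with $m_k^{(1)}=m_k^{(2)}=:m_k$, both $u_k^{(1)}$ and $u_k^{(2)}$ solve \eqref{weakformdisc1} with identical right-hand side $F[m_k]$. Uniqueness of the discrete HJB solution then follows from the discrete well-posedness established in (the proof of) Theorem~\ref{Full'}; alternatively, subtracting the two HJB equations, testing with $u_k^{(1)}-u_k^{(2)}$, using coercivity of the symmetric part $\nu\nabla\cdot\nabla + \gamma_k\nabla\cdot\nabla + \kappa(\cdot)(\cdot)$ together with the monotonicity of the map $p\mapsto H(x,p)$ — more precisely, picking $\tilde b_k^{(i)}\in D_pH[u_k^{(i)}]$ and using $(H(\cdot,\nabla u_k^{(1)})-H(\cdot,\nabla u_k^{(2)}))(u_k^{(1)}-u_k^{(2)})\geq \tilde b_k^{(2)}\cdot\nabla(u_k^{(1)}-u_k^{(2)})(u_k^{(1)}-u_k^{(2)})$ and its symmetric partner to absorb the Hamiltonian difference — yields $\|u_k^{(1)}-u_k^{(2)}\|_{H^1(\Omega)}=0$. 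This completes the proof. I expect the only genuinely nontrivial step to be verifying the discrete maximum principle for the advection–reaction–diffusion matrix with the artificial diffusion \eqref{eta-iota}, but this is precisely the content of the construction in~\cite{JensenSmears2013} and is built into the scheme by design; every other step is a faithful transcription of the continuous argument.
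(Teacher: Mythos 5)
Your proposal is essentially the paper's own argument: the paper proves discrete uniqueness by noting that the discrete KFP operator is the adjoint of an operator in the class $W_k$, so the discrete maximum principle of Lemma~\ref{artdiff} gives $m_k\geq 0$, and then repeating the monotonicity argument of Theorem~\ref{thm2'} verbatim in $V_k\subset H_0^1(\Omega)$ (your discrete analogue of \eqref{7'} with the $\gamma_k$-terms cancelling is exactly right), with uniqueness of $u_k$ coming from the discrete HJB well-posedness in Lemma~\ref{thm1'}. One caveat: your ``alternative'' coercivity route for the final HJB step is not sound in general, since after using the subdifferential inequality the remaining advective term $\int_\Omega \tilde{b}\cdot\nabla w\, w\,\mathrm{d}x$ (with $w=u_k^{(1)}-u_k^{(2)}$) is not sign-definite and can only be absorbed by $\nu\|\nabla w\|_\Omega^2+\kappa\|w\|_\Omega^2$ under a smallness condition on $\|b\|_{C(\overline{\Omega}\times\mathcal{A})}$ relative to $\nu$ and the Poincar\'e constant, which is not assumed; the correct (and the paper's) route is the comparison/maximum-principle argument underlying Lemma~\ref{thm1'}, which you cite as your primary option.
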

The proofs of Theorem \ref{Full'} and Theorem \ref{thm2''} are given in Section~\ref{Sec6} below.

We now state the first main result on the convergence of the scheme \eqref{weakdisc}.

\begin{theorem}[Convergence]\label{convergence}
	Assume that the coupling term $F$ is strictly monotone and $G$ is nonnegative in the sense of distributions in $H^{-1}(\Omega)$. Let $(u,m)$ denote the unique pair that solves \eqref{sys} in the sense of Definition \ref{weakdef} and let $\{(u_k,m_k)\}_{k\in\mathbb{N}}$ denote the sequence of solutions generated by \eqref{weakdisc}. Then, as $k\to\infty$,
	\begin{align}\label{mainconv}
		u_k\to u\quad\text{\emph{in}}\quad H_0^1(\Omega),\quad m_k\to m\quad\text{\emph{in}}\quad L^q(\Omega),\quad 
		m_k\rightharpoonup m\quad\text{\emph{in}}\quad H_0^1(\Omega),
	\end{align}
	for any $q\in[1,2^*)$ where $2^*=\infty$ if $n=2$ and $2^*=\frac{2n}{n-2}$ if $n\geq 3$.
\end{theorem}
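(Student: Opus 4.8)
The plan is a compactness argument in the spirit of the existence proof for Theorem~\ref{Full}: extract convergent subsequences from the uniformly bounded families of discrete solutions, upgrade the convergence of the value function approximations to \emph{strong} $H^1_0$ convergence, pass to the limit in the discrete equations \eqref{weakdisc} to produce a weak solution of \eqref{sys}, and then invoke the uniqueness result of Theorem~\ref{thm2'} to conclude that the whole sequence converges. \textbf{Step 1 (compactness).} By the uniform bounds \eqref{uniform_boundss} of Theorem~\ref{Full'} and the uniform bound \eqref{Hsubdiff-bound} on the selections $\tilde b_k\in D_pH[u_k]$ in \eqref{weakdisc}, the families $\{u_k\}$, $\{m_k\}$ are bounded in $H^1_0(\Omega)$ and $\{\tilde b_k\}$ is bounded in $L^\infty(\Omega;\mathbb{R}^n)$. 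By Rellich--Kondrachov and Banach--Alaoglu there is a subsequence, not relabelled, along which $u_k\rightharpoonup u_\infty$ and $m_k\rightharpoonup m_\infty$ in $H^1_0(\Omega)$, $u_k\to u_\infty$ and $m_k\to m_\infty$ in $L^2(\Omega)$, and $\tilde b_k\rightharpoonup^*\tilde b_\infty$ in $L^\infty(\Omega;\mathbb{R}^n)$. By Remark~\ref{art-diff-coeff-properties} the artificial diffusion satisfies $\gamma_k=0$ for all $k\geq k_*$, so it will not contribute to any limit.

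\textbf{Step 2 (strong convergence of $u_k$ --- the crux).} Using the nestedness of the spaces $V_k$ and the density of $\bigcup_k V_k$ in $H^1_0(\Omega)$, choose $w_k\in V_k$ with $w_k\to u_\infty$ in $H^1_0(\Omega)$, so that $u_k-w_k\rightharpoonup 0$ in $H^1_0(\Omega)$ and $u_k-w_k\to 0$ in $L^2(\Omega)$. Testing \eqref{weakformdisc1} with $\psi=u_k-w_k$ for $k\geq k_*$ (so $\gamma_k=0$) gives
\[
\nu\int_\Omega\nabla u_k\cdot\nabla(u_k-w_k)\,\mathrm{d}x = \langle F[m_k],u_k-w_k\rangle_{H^{-1}\times H^1_0} - \int_\Omega\big(H(x,\nabla u_k)+\kappa u_k\big)(u_k-w_k)\,\mathrm{d}x.
\]
The right-hand side tends to $0$: the integral vanishes in the limit since $H(\cdot,\nabla u_k)+\kappa u_k$ is bounded in $L^2(\Omega)$ by the growth bound \eqref{bounds} while $u_k-w_k\to 0$ in $L^2(\Omega)$, and $\langle F[m_k],u_k-w_k\rangle\to 0$ since $F[m_k]\to F[m_\infty]$ in $H^{-1}(\Omega)$ by \eqref{F2} while $u_k-w_k\rightharpoonup 0$ in $H^1_0(\Omega)$. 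On the left, $\int_\Omega\nabla u_k\cdot\nabla w_k\,\mathrm{d}x\to\|\nabla u_\infty\|_\Omega^2$ by weak--strong convergence, so the identity forces $\|\nabla u_k\|_\Omega\to\|\nabla u_\infty\|_\Omega$; combined with $u_k\rightharpoonup u_\infty$ in the Hilbert space $H^1_0(\Omega)$ this yields $u_k\to u_\infty$ strongly in $H^1_0(\Omega)$.

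\textbf{Step 3 (passing to the limit and identifying the limit).} Fix $\psi,\phi\in V_{k_0}$; for $k\geq\max(k_0,k_*)$ they are admissible test functions in \eqref{weakdisc} with $\gamma_k=0$. Since $\nabla u_k\to\nabla u_\infty$ in $L^2(\Omega)$, the Lipschitz bound \eqref{bounds:lipschitz} gives $H(\cdot,\nabla u_k)\to H(\cdot,\nabla u_\infty)$ in $L^2(\Omega)$; since $m_k\to m_\infty$ in $L^2(\Omega)$ with $\tilde b_k\rightharpoonup^*\tilde b_\infty$ in $L^\infty(\Omega;\mathbb{R}^n)$, one obtains $m_k\tilde b_k\rightharpoonup m_\infty\tilde b_\infty$ in $L^2(\Omega;\mathbb{R}^n)$. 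Letting $k\to\infty$ in \eqref{weakformdisc1}--\eqref{weakformdisc2} and using density of $\bigcup_k V_k$ in $H^1_0(\Omega)$ shows $(u_\infty,m_\infty)$ satisfies \eqref{weakform} with the field $\tilde b_\infty$, and $\tilde b_\infty\in D_pH[u_\infty]$ by Lemma~\ref{inclusion}. Hence $(u_\infty,m_\infty)$ is a weak solution of \eqref{sys} in the sense of Definition~\ref{weakdef}, so by Theorem~\ref{thm2'} it coincides with $(u,m)$. Since the limit is independent of the subsequence, the whole sequence satisfies $u_k\to u$ in $H^1_0(\Omega)$ and $m_k\rightharpoonup m$ in $H^1_0(\Omega)$. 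Finally, the boundedness of $\{m_k\}$ in $H^1_0(\Omega)$ together with the compact embedding $H^1_0(\Omega)\hookrightarrow\hookrightarrow L^q(\Omega)$ for $q\in[1,2^*)$, and the fact that any $L^q$-convergent subsequence must have limit $m$ (by a.e.\ convergence along a further subsequence and the established $L^2$ convergence), gives $m_k\to m$ in $L^q(\Omega)$, proving \eqref{mainconv}.

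The main obstacle is Step 2: the nonlinear term $H(x,\nabla u_k)$ cannot be passed to the limit under weak $H^1$ convergence alone, so strong convergence of the gradients must be obtained first. The decisive point is that every ``lower-order'' contribution (the $H$, $\kappa$, and coupling terms) converges because of the \emph{compact} embedding $H^1_0(\Omega)\hookrightarrow\hookrightarrow L^2(\Omega)$, which forces convergence of the Dirichlet energies $\|\nabla u_k\|_\Omega$ and hence strong $H^1_0$ convergence; the remaining care is purely bookkeeping --- accommodating the mesh-dependent test spaces via nestedness and density, and noting that the artificial diffusion vanishes once $k\geq k_*$.
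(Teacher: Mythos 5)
Your proposal is correct, and its overall architecture (uniform bounds from Theorem~\ref{Full'}, compactness, limit passage in the discrete equations over nested test spaces, identification of the limit as a weak solution via Lemma~\ref{inclusion}, and then Theorem~\ref{thm2'} plus the subsequence principle to get whole-sequence convergence) is the same as the paper's. The one place where you genuinely deviate is the crucial strong-$H^1$ step for $u_k$. The paper first introduces the weak $L^2$ limit $g$ of $H(\cdot,\nabla u_k)$, passes to the limit in the discrete HJB equation to obtain a limit equation with $g$ in place of the Hamiltonian, tests that limit equation with $u$ and the discrete equation with $u_k$, compares the two energy identities to conclude $\|\nabla u_k\|_\Omega\to\|\nabla u\|_\Omega$, and only afterwards identifies $g=H(\cdot,\nabla u)$ via Lipschitz continuity of $v\mapsto H(\cdot,\nabla v)$. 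You instead test \eqref{weakformdisc1} with $\psi=u_k-w_k$, where $w_k\in V_k$ is a recovery sequence for the weak limit (available by nestedness and density), and show the right-hand side vanishes using only the $L^2$ bound from \eqref{bounds}, the strong $L^2$ convergence of $u_k-w_k$, and \eqref{F2}; this yields norm convergence of the gradients directly, so that strong $H^1$ convergence is in hand \emph{before} any limit passage in the HJB equation and the auxiliary weak limit $g$ is never needed. Both devices are valid and rest on the same ingredients (compact embedding, strong $H^{-1}$ convergence of $F[m_k]$, vanishing artificial diffusion \eqref{vanishing-art-diff} — your simplification of just taking $k\ge k_*$ is fine); your variant is slightly more streamlined here, while the paper's version simultaneously produces the limit equation and is the template reused for the density in Corollary~\ref{convergence-cor}. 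Minor bookkeeping aside (the whole-sequence conclusion should formally be phrased as: every subsequence admits a further subsequence converging to the unique solution $(u,m)$, hence the full sequence converges — the paper is equally terse), there is no gap.
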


In general, the strong convergence of $\nabla m_k$ to $\nabla m$ in $L^{2}(\Omega;\mathbb{R}^n)$ is not known. The difficulty lies in the fact that it appears possible that the sequence $\{\tilde{b}_k\}_{k\in\mathbb{N}}$ might be such that there is no subsequence that converges in a sufficiently strong sense.
However, under additional conditions, the weak convergence of the density approximations in $H_0^1(\Omega)$  can be improved to strong convergence. This is the case, for instance, if the Hamiltonian $H$ given by \eqref{1''} is such that partial derivative $\frac{\partial H}{\partial p}$ exists and is continuous in $\Omega\times\mathbb{R}^n$. In fact, a weaker hypothesis than this can be formulated that ensures strong convergence of the density approximations in $H_0^1(\Omega)$. 
\begin{corollary}[Strong $H^1$-Convergence for Density Approximations]\label{convergence-cor}
	In addition to the hypotheses of Theorem \ref{convergence}, suppose that the sequence of transport vector fields $\{\tilde{b}_k\}_{k\in\mathbb{N}}$ from \eqref{weakdisc} is pre-compact in $L^1(\Omega;\mathbb{R}^n)$. Then, $m_k$ converges to $m$ strongly in $H_0^1(\Omega)$ as $k\to \infty$.
\end{corollary}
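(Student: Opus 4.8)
The plan is to revisit the proof of Theorem~\ref{convergence}, identify precisely the step where only weak $H^1$-convergence of $m_k$ was obtained, and upgrade it using the extra pre-compactness hypothesis on $\{\tilde{b}_k\}_{k\in\mathbb{N}}$. By Theorem~\ref{convergence} we already have $u_k\to u$ strongly in $H^1_0(\Omega)$, $m_k\to m$ strongly in $L^q(\Omega)$ for all $q\in[1,2^*)$, and $m_k\rightharpoonup m$ weakly in $H^1_0(\Omega)$; moreover $\{\tilde{b}_k\}_{k\in\mathbb{N}}$ is uniformly bounded in $L^\infty(\Omega;\mathbb{R}^n)$ by the bound~\eqref{Hsubdiff-bound}. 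The additional assumption gives a subsequence (not relabelled) with $\tilde{b}_k\to\tilde{b}_*$ strongly in $L^1(\Omega;\mathbb{R}^n)$, and after passing to a further subsequence, $\tilde{b}_k\to\tilde{b}_*$ a.e.\ in $\Omega$; combined with the uniform $L^\infty$-bound and dominated convergence, this yields $\tilde{b}_k\to\tilde{b}_*$ strongly in $L^r(\Omega;\mathbb{R}^n)$ for every $r\in[1,\infty)$. One should also check $\tilde{b}_*\in D_pH[u]$ via Lemma~\ref{inclusion} (using $u_k\to u$ in $H^1$ and $\tilde{b}_k\rightharpoonup^*\tilde{b}_*$), so that $(u,m)$ together with this $\tilde{b}_*$ is a weak solution in the sense of Definition~\ref{weakdef}; since the weak solution pair is unique, no contradiction arises with the limit identified in Theorem~\ref{convergence}.

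The core of the argument is a standard energy (Galerkin-orthogonality) estimate for the density equation. Subtract the continuous density equation~\eqref{weakform2} from the discrete one~\eqref{weakformdisc2}, test with $m_k-P_k m$ where $P_k$ is the $H^1_0$-orthogonal projection onto $V_k$ (recall $\gamma_k=0$ for $k\geq k_*$ by~\eqref{vanishing-art-diff}, so for large $k$ both equations have the same principal part $\nu\nabla\cdot\nabla$), and use coercivity of the principal part together with the Poincaré inequality. Writing $e_k\coloneqq m_k-m$, one arrives at an inequality of the form
\begin{equation*}
\nu\|\nabla e_k\|_\Omega^2 \leq C\Big(\|\nabla(m-P_k m)\|_\Omega\,\|\nabla e_k\|_\Omega + \Big|\int_\Omega (m_k\tilde b_k - m\tilde b_*)\cdot\nabla(m_k-P_k m)\,\mathrm{d}x\Big| + \text{l.o.t.}\Big),
\end{equation*}
where the lower-order terms involve $\kappa$ and are controlled by the $L^2$-convergence $m_k\to m$. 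The first term vanishes as $k\to\infty$ because $P_k m\to m$ in $H^1_0(\Omega)$ (density of $\bigcup_k V_k$ and boundedness of $\nabla e_k$, or a Young's inequality absorption). The advective term is handled by splitting $m_k\tilde b_k - m\tilde b_* = (m_k-m)\tilde b_k + m(\tilde b_k-\tilde b_*)$: the first piece is small in $L^2$ since $m_k\to m$ in $L^2$ and $\{\tilde b_k\}$ is bounded in $L^\infty$; the second piece is small in $L^2$ because $m\in H^1_0(\Omega)\subset L^{2^*}(\Omega)$ and $\tilde b_k\to\tilde b_*$ in $L^r$ for $r$ conjugate-type exponent large enough (Hölder with exponents matching $L^{2^*}$ and $L^{2^*/(2^*-2)}$, using $2^*/(2^*-2)<\infty$), so the product converges in $L^2(\Omega;\mathbb{R}^n)$ and pairs against the bounded sequence $\nabla(m_k-P_k m)$ to give a vanishing contribution. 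Hence $\|\nabla e_k\|_\Omega\to 0$, i.e.\ $m_k\to m$ strongly in $H^1_0(\Omega)$.

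Finally, a subsequence-uniqueness argument removes the passage to subsequences: every subsequence of $\{m_k\}$ has a further subsequence converging strongly in $H^1_0(\Omega)$ to $m$ (the limit is always $m$ by Theorem~\ref{convergence}), so the whole sequence converges. The main obstacle is the advective term: without the pre-compactness hypothesis one only has $\tilde b_k\rightharpoonup^*\tilde b_*$, which is not enough to pass to the limit in $\int_\Omega m(\tilde b_k-\tilde b_*)\cdot\nabla(m_k-P_k m)$ since $\nabla(m_k-P_k m)$ converges only weakly; the strong $L^1$- (hence $L^r$-) convergence of $\tilde b_k$ is exactly what turns one of the two factors in that pairing into a strongly convergent sequence, which is the crux of the improvement.
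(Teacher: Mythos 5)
Your argument is correct in substance but follows a genuinely different route from the paper. The paper proves strong $H^1$-convergence by a ``weak convergence plus convergence of norms'' argument: it upgrades the $L^1$-pre-compactness of $\{\tilde{b}_{k}\}$ (via the uniform $L^\infty$ bound and H\"older) to strong convergence in $L^s(\Omega;\mathbb{R}^n)$ for some $s>n$, deduces $m_{k_j}\tilde{b}_{k_j}\to m\overline{b}$ in $L^2$, tests the discrete KFP equation with $m_{k_j}$ itself to identify $\lim_j\|\nabla m_{k_j}\|_\Omega^2$, passes to the limit in the discrete KFP to see that $m$ and $\overline{b}$ satisfy the continuous KFP, tests that with $m$ to show the limit equals $\|\nabla m\|_\Omega^2$, and concludes by weak convergence plus norm convergence, finishing with the same subsequence-uniqueness step you use. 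You instead run a Galerkin-orthogonality energy estimate, testing the difference of the discrete and continuous KFP equations with $m_k-P_km$ and splitting $m_k\tilde{b}_k-m\tilde{b}_*=(m_k-m)\tilde{b}_k+m(\tilde{b}_k-\tilde{b}_*)$; the crucial analytic ingredient (strong convergence of $\tilde{b}_k$ in finite $L^r$-norms obtained from $L^1$-pre-compactness and the $L^\infty$ bound, so that the advective products converge strongly in $L^2$) is the same. Your approach buys a quasi-optimal error-type inequality (and avoids computing limits of norms), at the price of invoking the $H^1_0$-projection and its approximation property; the paper's is slightly leaner since it reuses the discrete equation with its own solution as test function.

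One step in your write-up needs tightening: the energy identity requires the continuous KFP equation \eqref{weakform2} to hold with the \emph{specific} limit field $\tilde{b}_*$ of the chosen subsequence, and this does not follow from Lemma~\ref{inclusion} plus uniqueness of the solution pair $(u,m)$ alone, since the admissible vector field in Definition~\ref{weakdef} is in general non-unique and not every element of $D_pH[u]$ satisfies the KFP equation with $m$. The fix is exactly what the paper does: pass to the limit in the discrete KFP equation \eqref{weakformdisc2} along the subsequence with fixed test functions in $V_j$ (using $m_k\to m$ in $L^2$, $\tilde{b}_k\rightharpoonup^*\tilde{b}_*$, density of $\bigcup_j V_j$, and \eqref{vanishing-art-diff}) to obtain the continuous KFP with $\tilde{b}_*$; since the strong $L^1$ limit and the weak-$*$ limit coincide, this supplies precisely the equation your subtraction uses. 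With that repair (and the harmless bookkeeping of the H\"older exponent for $n=2$, where one works with finite $q<2^*=\infty$), your proof is complete.
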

\begin{remark} 
	\emph{The compactness hypothesis on the sequence of transport vector fields introduced above is satisfied when the partial derivative $\frac{\partial H}{\partial p}$ exists and is continuous in $\Omega\times\mathbb{R}^n$. Indeed, in this case 
	$\tilde{b}_k= \frac{\partial H}{\partial p}(x,\nabla u_k)$ in $L^{\infty}(\Omega;\mathbb{R}^n)$ for all $ k\in\mathbb{N}.$ With the strong convergence of the value function approximations in $H_0^1(\Omega)$ guaranteed by Theorem \ref{convergence}, it is easy to see that the entire sequence $\{\tilde{b}_k\}_{k\in\mathbb{N}}$ converges strongly to $\frac{\partial H}{\partial p}(x,\nabla u)$ in $L^s(\Omega;\mathbb{R}^n)$ for any $s\in [1,\infty)$. Hence, the sequence is pre-compact in $L^1(\Omega;\mathbb{R}^n)$.}
\end{remark}

\section{Analysis of Monotone Finite Element Scheme}\label{Sec6}
\subsection{Stabilization of Linear Differential Operators}

We will say that a linear operator $L:V_k\to V_k^*$ satisfies the \emph{discrete maximum principle} provided that the following condition holds: if $w\in V_{k}$ and $\langle Lw,\xi_i\rangle_{V_k^*\times V_k}\geq 0$ for all $ i\in\{1,\cdots,N_k\}$, then $w\geq 0$ in $\Omega$.
For the analysis of \eqref{weakdisc}, 
we introduce a collection of linear operators that are perturbations of discrete relatives to the operators considered in the continuous setting of Lemma \ref{unif}. Recall the definition of the sequence $\{\gamma_k\}_{k\in\mathbb{N}}$ given in \eqref{eta-iota}. For each $k\in\mathbb{N}$, let $W_k$ denote the collection of linear operators $L:V_{k}\to V_{k}^*$ of the form $$\langle L w, v\rangle_{V_k^*\times V_k}\coloneqq \int_{\Omega}(\nu+\gamma_k)\nabla w\cdot\nabla v+\tilde{b}\cdot\nabla w v+\kappa w v\,\mathrm{d}x\quad \forall w,v\in V_k,$$  with $\tilde{b}:\Omega\to\mathbb{R}^n$ denoting a Lebesgue measurable vector field satisfying the uniform bound  $\|\tilde{b}\|_{L^{\infty}(\Omega;\mathbb{R}^n)}\leq \|b\|_{C(\overline{\Omega}\times\mathcal{A};\mathbb{R}^n)}$. 

By adapting the proof of \cite[Theorem 4.2]{burman2002nonlinear} and \cite[Section 8]{JensenSmears2013}, we obtain the following result that will assist with ensuring nonnegativity of finite element approximations $\{m_k\}_{k\in\mathbb{N}}$ of the density function and proving convergence of the scheme~\eqref{weakdisc}. 
\begin{lemma}[Stabilization via Artificial Diffusion]\label{artdiff}
	Given $k\in\mathbb{N}$, each operator $L\in W_k$, and its adjoint $L^*$, satisfy the discrete maximum principle.
\end{lemma}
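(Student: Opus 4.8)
The plan is to show that for every operator $A \in W_k$ the stiffness matrix $(\langle A\xi_j, \xi_i\rangle_{V_k^*\times V_k})_{i,j=1}^{N_k}$ is an $M$-matrix, i.e.\ it has non-negative diagonal entries, non-positive off-diagonal entries, and is invertible with a non-negative inverse; this is precisely equivalent to the discrete maximum principle as stated. Since the bilinear form defining $A$ is not symmetric, the same argument applied to the transpose matrix (which is exactly the matrix of $A^*$) handles the adjoint, so it suffices to treat $A$ itself. First I would expand $\langle A\xi_j,\xi_i\rangle = \int_\Omega (\nu+\gamma_k)\nabla\xi_j\cdot\nabla\xi_i + (\tilde b\cdot\nabla\xi_j)\xi_i + \kappa\,\xi_j\xi_i\,\mathrm{d}x$ and estimate the three contributions element by element, using that $\gamma_k$ and $\nabla\xi_i|_K$ are constant on each $K\in\mathcal{T}_k$.

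The heart of the matter is the off-diagonal sign, so I would carry out that estimate first. Fix $i\neq j$ and an element $K$ containing both nodes $x_i, x_j$. By the strict acuteness condition \eqref{acute}, $\nabla\xi_i\cdot\nabla\xi_j|_K \leq -\sin(\theta)\,|\nabla\xi_i|_K|\,|\nabla\xi_j|_K|$, so the diffusive term contributes at most $-(\nu+\gamma_k|_K)\sin(\theta)|\nabla\xi_i|_K||\nabla\xi_j|_K|\,|K|$. For the advective term, $|\int_K (\tilde b\cdot\nabla\xi_j)\xi_i\,\mathrm{d}x| \leq \|b\|_{C(\overline\Omega\times\mathcal A)}\,|\nabla\xi_j|_K|\int_K \xi_i\,\mathrm{d}x \leq \|b\|_{C(\overline\Omega\times\mathcal A)}\,|\nabla\xi_j|_K|\,|K|/(n+1)$, and similarly the reaction term is bounded by $\kappa\,|K|/(n+1)$ times the appropriate factor; bounding $\int_K\xi_i\xi_j$ by $|K|/(n+1)$ and using $\mathrm{diam}(K)^{-1} \le |\nabla\xi_i|_K|/\sigma_K^k \leq |\nabla\xi_i|_K|/\sigma^k$ (from the definition \eqref{sigmaK} of $\sigma^k$) to convert the $|K|$ and the $\xi_i$-integral into factors of $|\nabla\xi_i|_K|$. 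After collecting terms and factoring out $|\nabla\xi_i|_K||\nabla\xi_j|_K||K|$, the off-diagonal entry restricted to $K$ is bounded above by
\begin{equation*}
\left(-(\nu+\gamma_k|_K)\sin(\theta) + \frac{\|b\|_{C(\overline\Omega\times\mathcal A)}\,\mathrm{diam}(K) + \kappa\,\mathrm{diam}(K)^2}{\sigma^k}\right)|\nabla\xi_i|_K||\nabla\xi_j|_K||K|,
\end{equation*}
and the definition \eqref{eta-iota} of $\gamma_k|_K$ with $\mu>1$ forces the bracket to be $\leq (1-\mu)\frac{\|b\|_{C}\mathrm{diam}(K)+\kappa\,\mathrm{diam}(K)^2}{\sigma^k} < 0$ whenever $\nabla\xi_i|_K,\nabla\xi_j|_K$ are both non-zero (and the contribution vanishes otherwise). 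Summing over the elements containing the edge $x_ix_j$ gives the non-positivity of the off-diagonal entry, and strict negativity on at least one such element when the entry is nonzero.

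It remains to assemble the standard $M$-matrix conclusion. The diagonal entries are $\geq \kappa\int_\Omega \xi_i^2 > 0$ plus a non-negative diffusion term, hence positive. Irreducible diagonal dominance: the row sums satisfy $\sum_j \langle A\xi_j,\xi_i\rangle = \int_\Omega (\tilde b\cdot\nabla(\sum_j\xi_j))\xi_i + \kappa(\sum_j\xi_j)\xi_i\,\mathrm{d}x$ using $\sum_j\nabla\xi_j = 0$ since $\sum_j\xi_j \equiv 1$ on $\Omega$; the first term vanishes and the second equals $\kappa\int_\Omega \xi_i \geq 0$, so the matrix is (weakly) diagonally dominant, with strict dominance in rows corresponding to nodes adjacent to the boundary where the hat functions do not sum to one inside $\Omega$. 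Connectedness of $\Omega$ and conformity of the mesh give irreducibility of the off-diagonal pattern. Invoking the standard theory of irreducibly diagonally dominant matrices with the correct sign pattern (e.g.\ as in \cite{ciarlet1973maximum,burman2002nonlinear}) yields that the matrix is monotone, i.e.\ its inverse has non-negative entries, which is exactly the discrete maximum principle for $A$; applying the identical argument to the transpose gives it for $A^*$. The main obstacle is the bookkeeping in the off-diagonal estimate — in particular, carefully converting the mass-matrix and advection integrals into multiples of $|\nabla\xi_i|_K||\nabla\xi_j|_K|$ via $\sigma^k$ so that the cancellation engineered into \eqref{eta-iota} is visible; everything after that is the textbook $M$-matrix argument.
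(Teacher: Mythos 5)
Your overall strategy --- showing that the stiffness matrix of each $A\in W_k$ is an M-matrix, via non-positive off-diagonal entries obtained from the strict acuteness condition \eqref{acute} together with the artificial diffusion \eqref{eta-iota}, followed by irreducible diagonal dominance --- is precisely the argument that the paper delegates to its references (Burman--Ern, Theorem~4.2, and Jensen--Smears, Section~8) without reproducing it, so your proposal is a faithful reconstruction of the intended proof, and the key computation (the per-element bracket that the choice of $\gamma_k$ with $\mu>1$ forces to be negative) is set up correctly.

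Three points need repair, though none is fatal. First, your justification that the diagonal entries are positive reads them as bounded below by $\kappa\int_\Omega\xi_i^2$, which vanishes when $\kappa=0$ (permitted by the standing assumptions), and it ignores the advective contribution $\int_\Omega(\tilde b\cdot\nabla\xi_i)\xi_i\,\mathrm{d}x$, which has no sign; positivity of the diagonal should instead come from the same element-wise estimate (the diffusion term dominates the advection and reaction terms, by the identical use of \eqref{sigmaK} and \eqref{eta-iota}), or from weak diagonal dominance combined with the strict negativity of the off-diagonal entries attached to neighbouring nodes. Second, and more substantively, \emph{applying the identical argument to the transpose} does not go through: the row sums of $A^{T}$ are the column sums of $A$, namely $\sum_i\langle A\xi_j,\xi_i\rangle$, and on interior elements these contain $\int_K\tilde b\cdot\nabla\xi_j\,\mathrm{d}x$, which can be negative, so $A^{T}$ need not be diagonally dominant by rows. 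The conclusion for $A^*$ is nevertheless immediate once $A^{-1}\geq 0$ is established, since $\bigl(A^{T}\bigr)^{-1}=\bigl(A^{-1}\bigr)^{T}\geq 0$; you should replace the transpose argument by this one-line observation. Third, converting the reaction term into a multiple of $|\nabla\xi_i|_K|\,|\nabla\xi_j|_K|\,|K|$ requires \emph{two} applications of $1\leq \mathrm{diam}(K)\,|\nabla\xi_\ell|_K|/\sigma^k$, which yields $\kappa\,\mathrm{diam}(K)^2/(\sigma^k)^2$ rather than the $\kappa\,\mathrm{diam}(K)^2/\sigma^k$ appearing in your bracket and in \eqref{eta-iota}; the two are reconciled because $\sigma_K^k\geq 1$ for every simplex (the distance from a vertex to the opposite face is at most $\mathrm{diam}(K)$, so $\min_i|\nabla\psi_{k,i}^K|\geq\mathrm{diam}(K)^{-1}$), but this normalisation must be stated, since the exact cancellation engineered into \eqref{eta-iota} depends on it.
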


\subsection{Well-posedness}
First, we establish that the numerical scheme \eqref{weakdisc} is well-posed, i.e.\ it admits a unique numerical solution for each $k\in\mathbb{N}$. Suppose $C_0\geq 0$ is given. Fundamental to the conclusion of Lemma \ref{unif} is the fact that operators from $\mathcal{G}(C_0)$, and their adjoints, are invertible as maps from $H_0^1(\Omega)$ into $H^{-1}(\Omega)$.
We will employ a discrete version of Lemma \ref{unif} in the analysis of \eqref{weakdisc}.
\begin{lemma}\label{unifdiscrete}
	There exists a constant $C_1^*> 0$, independent of $k\in\mathbb{N}$, such that
	\begin{equation}\label{discrete_estimates} 
		\sup_{k\in\mathbb{N}}\sup_{L\in W_k}\max\left\{\left\|L^{-1}\right\|_{\mathcal{L}\left(V_{k}^*,V_{k}\right)},\left\|{L^*}^{-1}\right\|_{\mathcal{L}\left(V_{k}^*,V_{k}\right)}\right\}\leq C_1^*.
	\end{equation}
\end{lemma}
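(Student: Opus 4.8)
The plan is to argue by contradiction, by a compactness argument of the same kind that underlies Lemma~\ref{unif}. I would first reduce the statement to a uniform bound on $\|L^{-1}\|_{\mathcal{L}(V_k^*,V_k)}$ alone. For fixed $k\in\mathbb{N}$ and $L\in W_k$, Lemma~\ref{artdiff} furnishes the discrete maximum principle for both $L$ and $L^*$; applying it to $w$ and to $-w$ shows that $Lw=0$ forces $w=0$, so $L$ is injective and hence, since $\dim V_k<\infty$, boundedly invertible, and the same holds for $L^*$. Moreover $L^*$ is the Banach-space adjoint of $L\colon V_k\to V_k^*$ under the canonical isometric identification $V_k^{**}\cong V_k$, so by a standard duality identity $\|(L^*)^{-1}\|_{\mathcal{L}(V_k^*,V_k)}=\|L^{-1}\|_{\mathcal{L}(V_k^*,V_k)}$. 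It therefore suffices to bound $\|L^{-1}\|_{\mathcal{L}(V_k^*,V_k)}$ uniformly over all $k\in\mathbb{N}$ and $L\in W_k$.

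Suppose this uniform bound fails. Then, after rescaling, there are indices $k_j$, operators $L_j\in W_{k_j}$ with advective coefficients $\tilde b_j$ satisfying $\|\tilde b_j\|_{L^\infty(\Omega;\mathbb{R}^n)}\le\|b\|_{C(\overline{\Omega}\times\mathcal{A})}=:C_0$, and functions $u_j\in V_{k_j}$ with $\|u_j\|_{H^1(\Omega)}=1$ such that $f_j:=L_ju_j\in V_{k_j}^*$ satisfies $\|f_j\|_{V_{k_j}^*}\to0$. Passing to subsequences, Banach--Alaoglu gives $\tilde b_j\rightharpoonup^*\tilde b$ in $L^\infty(\Omega;\mathbb{R}^n)$ with $\|\tilde b\|_{L^\infty}\le C_0$, and either (passing to a further subsequence) $k_j$ equals a fixed $\bar k$ for all $j$, or $k_j\to\infty$. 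In the first case $\{u_j\}$ lies in the unit sphere of the finite-dimensional space $V_{\bar k}$, so a further subsequence converges, $u_j\to u$ in $V_{\bar k}$, with $\|u\|_{H^1}=1$; passing to the limit in $\langle L_ju_j,v\rangle=\langle f_j,v\rangle$ for each fixed $v\in V_{\bar k}$---the advective term converging because $\nabla u_j\,v\to\nabla u\,v$ strongly in $L^2(\Omega)$ while $\tilde b_j\rightharpoonup^*\tilde b$---identifies $u$ as an element of the kernel of the operator in $W_{\bar k}$ with coefficient $\tilde b$, which is injective; hence $u=0$, contradicting $\|u\|_{H^1}=1$.

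It remains to treat the case $k_j\to\infty$, in which~\eqref{vanishing-art-diff} lets us assume $\gamma_{k_j}\equiv0$. Since $\{u_j\}$ is bounded in $H_0^1(\Omega)$, after a subsequence $u_j\rightharpoonup u$ in $H_0^1(\Omega)$ and, by Rellich--Kondrachov, $u_j\to u$ in $L^2(\Omega)$. The key point is to upgrade this to $\nabla u_j\to\nabla u$ in $L^2(\Omega;\mathbb{R}^n)$. Let $\Pi_j\colon H_0^1(\Omega)\to V_{k_j}$ be the orthogonal projection for the Dirichlet inner product $(v,w)\mapsto\int_\Omega\nabla v\cdot\nabla w$; by nestedness of the meshes and density of $\bigcup_k V_k$ in $H_0^1(\Omega)$ one has $\Pi_ju\to u$ in $H_0^1(\Omega)$, so $u_j-\Pi_ju\to0$ in $L^2(\Omega)$. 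Testing $L_ju_j=f_j$ against $u_j-\Pi_ju\in V_{k_j}$, the right-hand side tends to $0$ (because $\|f_j\|_{V_{k_j}^*}\to0$ and $\|u_j-\Pi_ju\|_{H^1}$ is bounded); the terms $\int_\Omega\nu\nabla u_j\cdot\nabla\Pi_ju$ and $\int_\Omega\kappa u_j\Pi_ju$ converge by weak-times-strong convergence; and $\int_\Omega(\tilde b_j\cdot\nabla u_j)(u_j-\Pi_ju)$ together with $\int_\Omega\kappa u_j(u_j-\Pi_ju)$ tend to $0$, since $\tilde b_j\cdot\nabla u_j$ and $\kappa u_j$ are bounded in $L^2(\Omega)$ while $u_j-\Pi_ju\to0$ in $L^2(\Omega)$. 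What remains forces $\|\nabla u_j\|_\Omega\to\|\nabla u\|_\Omega$, which with $\nabla u_j\rightharpoonup\nabla u$ yields $\nabla u_j\to\nabla u$ strongly and hence $\|u\|_{H^1}=1$. Then, for fixed $v\in\bigcup_k V_k$ and all $j$ with $v\in V_{k_j}$, the advective term in $\langle L_ju_j,v\rangle=\langle f_j,v\rangle$ passes to the limit (since $\nabla u_j\,v\to\nabla u\,v$ strongly in $L^2(\Omega)$ and $\tilde b_j\rightharpoonup^*\tilde b$), and by density of $\bigcup_k V_k$ in $H_0^1(\Omega)$ we obtain $\int_\Omega\nu\nabla u\cdot\nabla v+\tilde b\cdot\nabla u\,v+\kappa u v=0$ for all $v\in H_0^1(\Omega)$; that is, $u$ belongs to the kernel of an operator in $\mathcal{G}(C_0+\kappa)$, which is injective by Lemma~\ref{unif}. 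Hence $u=0$, again contradicting $\|u\|_{H^1}=1$, which completes the proof.

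The step I expect to be the main obstacle is the passage to the limit in the advective term $\int_\Omega(\tilde b_j\cdot\nabla u_j)\,v$, in which both factors converge only weakly (respectively weak-$*$); it is overcome by first proving the strong convergence $\nabla u_j\to\nabla u$ in $L^2$ through the test function $u_j-\Pi_ju$, using coercivity of the principal part together with the fact that the lower-order terms are paired against an $L^2$-null sequence. Two secondary points deserve care: $u$ need not lie in $V_{k_j}$, which forces the use of the Galerkin projection $\Pi_ju$ in place of $u$; and a direct compactness argument for the adjoint inverses would encounter an analogous obstruction in the advective term, which is precisely why the adjoint bound is instead obtained by the duality reduction in the first step.
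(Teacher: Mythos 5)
Your proof is correct, but it takes a genuinely different route from the paper at the decisive step. The paper reduces matters to the adjoint operators: it normalizes $\|\overline{u}_j\|_{L^2}=1$, uses a G\aa rding inequality to get $H^1$-boundedness, and then the limit passage in the adjoint equation is immediate because there the advective term has the form $\overline{u}_j\,\tilde b_j\cdot\nabla v$, in which the zeroth-order factor $\overline{u}_j$ converges \emph{strongly} in $L^2$ by Rellich--Kondrachov while $\tilde b_j\rightharpoonup^*\tilde b$; the bound for $L^{-1}$ is then recovered by duality (Hahn--Banach). You go the other way: you reduce to the primal operator via the identity $\|(L^*)^{-1}\|=\|L^{-1}\|$, normalize in $H^1$, and must then face the term $\int_\Omega(\tilde b_j\cdot\nabla u_j)v$ with two merely weakly(-$*$) convergent factors. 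Your resolution --- testing with $u_j-\Pi_j u$, where $\Pi_j$ is the Ritz (Dirichlet) projection, to force $\|\nabla u_j\|_\Omega\to\|\nabla u\|_\Omega$ and hence strong $H^1$ convergence --- is a sound and standard device, and all the individual limit passages you invoke (the $L^2$-nullity of $u_j-\Pi_j u$, the pairing of weak-$*$ $L^\infty$ convergence against strongly convergent $L^1$ products, the vanishing of $\gamma_{k_j}$ for large $k_j$) check out; the final contradiction via injectivity of the limiting operator from $\mathcal{G}(\cdot)$ through Lemma~\ref{unif} matches the paper. Two remarks on the comparison: the paper's adjoint-first strategy is leaner precisely because it never needs strong gradient convergence, whereas your primal-first strategy costs the extra projection argument but dispenses with the G\aa rding inequality and yields strong $H^1$ convergence of the contradiction sequence as a by-product; and your explicit treatment of the bounded-$k_j$ case by finite-dimensional compactness fills in a point that the paper passes over silently when it takes the sequence $\{k_j\}$ strictly increasing.
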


\begin{proof}
	It suffices to show that, for any $k\in\mathbb{N}$ and any operator $L\in W_k$ we have
	\begin{equation}\label{DISC}
		\left\|{L^*}^{-1}\right\|_{\mathcal{L}\left(V_{k}^*,V_{k}\right)}\leq C_1^*,
	\end{equation}
	for some constant $C_1^*>0$ independent of $k\in\mathbb{N}$. Once proved, an application of the Hahn--Banach Theorem allows us to deduce that for any $k\in\mathbb{N}$ and any operator $L\in W_k$ we also have $
	\left\|{L}^{-1}\right\|_{\mathcal{L}\left(V_{k}^*,V_{k}\right)}\leq C_1^*.$

	Let $k\in\mathbb{N}$ and $L\in W_k$ be given. We know by Lemma \ref{artdiff} that the operator $L$ and their adjoint $L^*$ satisfy the discrete maximum principle. Since $V_k$ is a finite dimensional vector space and $L:V_k\to V_k^*$ is a linear map, the discrete maximum principle ensures the invertibility of both $L$ and $L^*$ as maps from $V_k$ to $V_k^*$. Moreover, for each $f\in V_k^*$ the unique solution $\overline{u}_k\in V_{k}$ to the equation $L^*\overline{u}_k=f$ in $V_{k}^*$ satisfies the following Garding inequality for some constant $C^*>0$ that is independent of $k$:
	\begin{equation}\label{Garding'}
		\|\overline{u}_k\|_{H^1(\Omega)}\leq C^*\left(\|f\|_{V_{k}^*}+\|\overline{u}_k\|_{\Omega}\right)\quad\forall\,f\in V_{k}^*.
	\end{equation}
	
	Suppose for contradiction that \eqref{DISC} does not hold. Then for every integer $j\in\mathbb{N}$ there exists an integer $k_j\in\mathbb{N}$ and an operator $L_{j}\in W_{k_j}$ such that $$\left\|{L_{j}^*}^{-1}\right\|_{\mathcal{L}\left(V_{k_j}^*,V_{k_j}\right)}>j,$$ with the sequence $\{k_j\}_{j\in\mathbb{N}}$ strictly increasing. 
	This implies, together with \eqref{Garding'}, that there exist sequences $\{\overline{u}_{j}\}_{j\in\mathbb{N}}\subset H_0^1(\Omega)$,  and $\{f_{j}\}_{j\in\mathbb{N}}\subset V_1^*$ such that 
	\begin{subequations}
		\begin{gather}
			\overline{u}_{j}\coloneqq {L_{j}^*}^{-1}f_{j}\in V_{k_j},\quad  f_{j}\in V_{k_j}^*, \quad \|\overline{u}_{j}\|_{\Omega}=1\quad \forall\, j\in\mathbb{N},\label{ufdef} \\ 
			\|f_{j}\|_{V_{k_j}^*}<\frac{C^*}{j-C^*}\quad\forall \,j>C^*.\label{ineq'}
		\end{gather}
	\end{subequations} 
	In particular, for all $j\in\mathbb{N}$, there holds
	\begin{align}
		\langle L^*_{j}\overline{u}_{j},v\rangle_{V_{k_j}^*\times V_{k_j}}= \int_{\Omega}(\nu+\gamma_{k_j})\nabla \overline{u}_{j}\cdot\nabla v+\overline{u}_{j}\tilde{b}_{j}\cdot\nabla v+\kappa \overline{u}_jv\,\mathrm{d}x=\langle f_{j}, v\rangle_{V_{k_j}^*\times V_{k_j}}\label{eqnn}
	\end{align}
	for all $v\in V_{k_j}$, with some $\tilde{b}_{j}$ satisfying the uniform bound $\|\tilde{b}_j\|_{L^{\infty}(\Omega;\mathbb{R}^n)}\leq \|b\|_{C(\overline{\Omega}\times\mathcal{A};\mathbb{R}^n)}$ by definition of the inclusion $L_j\in W_{k_j}$.
	Garding's inequality \eqref{Garding'} and \eqref{ineq'} imply there exists a constant $C>0$, independent of $j$, such that $\|\overline{u}_{j}\|_{H^1(\Omega)}\leq C$ for all $j\in\mathbb{N}$. Since the sequence $\{\tilde{b}_{j}\}_{j\in\mathbb{N}}\subset L^{\infty}(\Omega;\mathbb{R}^n)$ is uniformly bounded, we may pass to a subsequence without change of notation to get, as $j\to\infty$,
	\begin{subequations}
		\begin{gather}
			\overline{u}_{j} \rightharpoonup \overline{u} \text{ in } H^1_0(\Omega), \quad \overline{u}_{j} \to \overline{u} \text{ in } L^2(\Omega),\label{weakconv} \\ 
			\tilde{b}_{j} \rightharpoonup^* \tilde{b}_* \text{ in } L^{\infty}(\Omega;\mathbb{R}^n).\label{c_weak_star}
		\end{gather}
	\end{subequations} 
	
	Let $v_l\in V_l$ be given for some fixed $l\in\mathbb{N}$. By nestedness subspaces $\{V_{k}\}_{k\in\mathbb{N}}$ in $H_0^1(\Omega)$, we get by \eqref{eqnn} that
	\begin{equation}\label{contra-ID}
		\int_{\Omega}\nu\nabla \overline{u}_{j}\cdot\nabla v_{l}+\overline{u}_{j}\tilde{b}_{j}\cdot\nabla v_{l}+\kappa \overline{u}_jv_l\,\mathrm{d}x+\int_{\Omega}\gamma_{k_j}\nabla \overline{u}_{j}\cdot\nabla v_l\,\mathrm{d}x=\langle f_{j}, v_{l}\rangle_{V_{k_j}^*\times V_{k_j}}
	\end{equation}
	for all $j\in\mathbb{N}$ such that $ k_j\geq l$.
	We have for all $l\in\mathbb{N}$ that 
	$\langle f_{j}, v_{l}\rangle_{V_{k_j}^*\times V_{k_j}}\to 0$ as $j\to\infty$ by \eqref{ineq'}. Moreover, uniform boundedness of the sequence $\{\overline{u}_j\}_{j\in\mathbb{N}}$ in $H_0^1(\Omega)$ and \eqref{vanishing-art-diff} imply that
	\begin{equation*}
		\lim_{j\to\infty}\left|\int_{\Omega}\gamma_{k_j}\nabla \overline{u}_{j}\cdot\nabla v_l\,\mathrm{d}x\right|=0\quad\forall\, l\in\mathbb{N}.
	\end{equation*}
	Recalling \eqref{weakconv}, \eqref{c_weak_star}, we send $j\to \infty$ in \eqref{contra-ID} to then obtain 
	$$\int_{\Omega}\nu\nabla \overline{u}\cdot\nabla v_{l}+\overline{u}\tilde{b}_*\cdot\nabla v_{l}+\kappa \overline{u}v_l\,\mathrm{d}x=0\quad \forall \,l\in\mathbb{N}.$$ Since $l$ was arbitrary, density of the union $\bigcup_{l\in\mathbb{N}}V_{l}$ in $H_0^1(\Omega)$ allows us to conclude
	$$\int_{\Omega}\nu\nabla \overline{u}\cdot\nabla \phi+\overline{u}\tilde{b}_*\cdot\nabla\phi+\kappa \overline{u}\phi\,\mathrm{d}x=0\quad \forall \phi\in H_0^1(\Omega).$$ Thus, $\overline{u}$ solves an elliptic equation with an operator that is the adjoint of an operator from the class $\mathcal{G}(C_0)$ for some constant $C_0$. Lemma \ref{unif} then implies that $\overline{u}=0$ in $H_0^1(\Omega)$ necessarily, contradicting the fact that $\|\overline{u}\|_{\Omega}=1$ which follows from \eqref{ufdef} and \eqref{weakconv}. Hence, \eqref{discrete_estimates} is proved for some constant $C_1^*$ independent of $k\in\mathbb{N}$, as required.
\end{proof}

By using Lemma \ref{unifdiscrete} and Schaefer's fixed point theorem, in a fashion similar to the proof of Lemma \ref{thm1}, we obtain a well-posedness result for the discrete HJB equation in  \eqref{weakdisc}.
\begin{lemma}\label{thm1'}
	Let $k\in\mathbb{N}$ be given. Then, for each $\overline{m}\in L^2(\Omega)$ there exists unique ${u}_k\in V_{k}$ such that
	\begin{equation}\label{strong'}
		\int_{\Omega}(\nu+\gamma_k)\nabla {u}_k\cdot\nabla \psi+H(x,\nabla {u}_k)\psi+\kappa {u}_k\psi\,\mathrm{d}x=\langle F[\overline{m}],\psi\rangle_{H^{-1}\times H_0^1}\text{ }\text{ } \forall\psi\in V_k.
	\end{equation} 
	There exists a constant $C_1^*$ independent of $k\in\mathbb{N}$ such that
	\begin{equation}\label{eq:disc_hjb_a_priori_bound}
		\lVert u_k \rVert_{H^1(\Omega)}\leq C_1^*\left(\lVert \overline{m} \rVert_\Omega + \lVert f \rVert_{C(\overline{\Omega}\times \mathcal{A})}+1\right).
	\end{equation}
	Moreover, the solution ${u}_k$ depends continuously on $\overline{m}$, i.e., if $\left\{\overline{m}_j\right\}_{j\in\mathbb{N}}\subset L^2(\Omega)$ is such that $\overline{m}_j\to \overline{m}$ in $L^2(\Omega)$ as $j\to\infty$, then the corresponding sequence of solutions $\left\{\overline{u}_j\right\}_{j\in\mathbb{N}}\subset V_{k}$ to the equation \eqref{strong'} converges in $V_{k}$ to the solution ${u}_k$ of \eqref{strong'}.
\end{lemma}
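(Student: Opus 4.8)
The plan is to adapt, \emph{mutatis mutandis}, the argument sketched for the continuous HJB equation in Lemma~\ref{thm1}: combine Schaefer's fixed point theorem with the existence of measurable selections (Lemma~\ref{Prop1}) and the $k$-uniform invertibility bound for the discrete operator class $W_k$ (Lemma~\ref{unifdiscrete}). For \textbf{existence}, fix $k\in\mathbb{N}$ and $\overline{m}\in L^2(\Omega)$ and define $\mathcal{S}\colon V_k\to V_k$ by letting $\mathcal{S}(w)$ be the unique element of $V_k$ with
\[
  \int_\Omega (\nu+\gamma_k)\nabla \mathcal{S}(w)\cdot\nabla\psi+\kappa\,\mathcal{S}(w)\,\psi\,\mathrm{d}x
  = \langle F[\overline{m}],\psi\rangle_{H^{-1}\times H_0^1}-\int_\Omega H(x,\nabla w)\,\psi\,\mathrm{d}x
  \qquad\forall\psi\in V_k,
\]
which exists and is unique by Lax--Milgram since $\nu+\gamma_k\ge\nu>0$ and $\kappa\ge 0$. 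Since $w\mapsto H(\cdot,\nabla w)$ is Lipschitz from $H^1(\Omega)$ to $L^2(\Omega)$ by \eqref{bounds:lipschitz} and $L^2(\Omega)$ embeds continuously into $V_k^*$, the map $\mathcal{S}$ is Lipschitz continuous, hence compact because $V_k$ is finite-dimensional, and any fixed point solves \eqref{strong'}. To close the argument one bounds $\{w\in V_k: w=t\,\mathcal{S}(w)\text{ for some }t\in[0,1]\}$ uniformly: for such $w$, using the proof of Lemma~\ref{Prop1} choose a measurable selection $\alpha^*$ with $H(x,\nabla w)=b(x,\alpha^*)\cdot\nabla w-f(x,\alpha^*)$ a.e., put $\tilde{b}\coloneqq b(\cdot,\alpha^*)$, and note that $\|t\tilde{b}\|_{L^{\infty}(\Omega;\mathbb{R}^n)}\le\|b\|_{C(\overline{\Omega}\times\mathcal{A})}$ by \eqref{Hsubdiff-bound}; then $w$ solves a linear equation whose operator lies in $W_k$ (with drift $t\tilde{b}$) and whose right-hand side is $t(F[\overline{m}]+f(\cdot,\alpha^*))$, so Lemma~\ref{unifdiscrete} with \eqref{F1} gives $\|w\|_{H^1(\Omega)}\le C_1^*(\|\overline{m}\|_\Omega+\|f\|_{C(\overline{\Omega}\times\mathcal{A})}+1)$ with $C_1^*$ independent of $k$, $t$, $w$. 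Schaefer's theorem then produces $u_k=\mathcal{S}(u_k)$, and the case $t=1$ of this estimate is exactly \eqref{eq:disc_hjb_a_priori_bound}.

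For \textbf{uniqueness}, if $u_k^{(1)},u_k^{(2)}$ both solve \eqref{strong'} and $e\coloneqq u_k^{(1)}-u_k^{(2)}$, applying the subgradient inequality \eqref{subdifferential} at $\nabla u_k^{(1)}$ and at $\nabla u_k^{(2)}$ sandwiches $H(\cdot,\nabla u_k^{(1)})-H(\cdot,\nabla u_k^{(2)})$ between two expressions $\tilde{b}_i\cdot\nabla e$ with $|\tilde{b}_i|\le\|b\|_{C(\overline{\Omega}\times\mathcal{A})}$; hence this difference equals $\tilde{c}\cdot\nabla e$ for a measurable field $\tilde{c}$ with $\|\tilde{c}\|_{L^{\infty}(\Omega;\mathbb{R}^n)}\le\|b\|_{C(\overline{\Omega}\times\mathcal{A})}$ (take $\tilde{c}=0$ where $\nabla e=0$, and $\tilde{c}$ parallel to $\nabla e$ otherwise; measurability follows from that of $H(\cdot,\nabla u_k^{(i)})$ and $\nabla e$). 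Subtracting the two instances of \eqref{strong'} then shows that $e$ lies in the kernel of an operator from $W_k$, which is trivial by Lemma~\ref{unifdiscrete}, so $e=0$.

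For \textbf{continuous dependence}, if $\overline{m}_j\to\overline{m}$ in $L^2(\Omega)$ with corresponding solutions $\overline{u}_j\in V_k$, then \eqref{eq:disc_hjb_a_priori_bound} bounds $\{\overline{u}_j\}$ in the finite-dimensional space $V_k$, so every subsequence admits a further subsequence converging in $V_k$; passing to the limit in \eqref{strong'} using the Lipschitz continuity of $w\mapsto H(\cdot,\nabla w)$ from $H^1(\Omega)$ to $L^2(\Omega)$ and of $F$ via \eqref{F2}, the limit solves \eqref{strong'} with data $\overline{m}$ and hence equals $u_k$ by the uniqueness just established. Since the limit is always $u_k$, the whole sequence converges, i.e.\ $\overline{u}_j\to u_k$ in $V_k$.

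The one genuinely delicate point — the same one arising already in Lemma~\ref{thm1} — is that the subgradient selection $\tilde{b}\in D_pH[\cdot]$ used to linearize the nonlinearity is neither unique nor a priori continuous in its argument, which precludes a naive contraction or continuity argument built on a selection-dependent map. The remedy, used at every step above, is never to commit to a particular selection but instead to observe that all linearized operators appearing belong to the class $W_k$, on which Lemma~\ref{unifdiscrete} supplies a $k$-uniform bound on the inverse; the compactness needed both for Schaefer's theorem and for the continuous-dependence argument is then automatic from finite-dimensionality of $V_k$.
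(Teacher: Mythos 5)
Your proposal is correct and follows essentially the same route as the paper, which proves this lemma by invoking Schaefer's fixed point theorem together with the measurable selections of Lemma~\ref{Prop1} and the uniform inverse bound of Lemma~\ref{unifdiscrete}; you have simply written out the details (the frozen-coefficient fixed-point map, the a priori bound on the set $\{w=t\,\mathcal{S}(w)\}$ via rewriting $H(x,\nabla w)=\tilde{b}\cdot\nabla w-f(x,\alpha^*)$, and the sandwich/selection argument reducing uniqueness and continuous dependence to invertibility of operators in $W_k$) that the paper leaves implicit. No gaps.
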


\begin{proof}[Proof of Theorem \ref{Full'} and Theorem \ref{thm2''}] 
	Let $k\in\mathbb{N}$ be given. We deduce the existence of discrete solutions to \eqref{weakdisc} using
	Lemma \ref{unifdiscrete} and Lemma \ref{thm1'} in an adaptation of the proof of Theorem \ref{thm1-existence}. 
	The uniform estimate \eqref{mk-unif-bound} follows immediately from the discrete KFP equation \eqref{weakformdisc2} and the uniform bound  \eqref{discrete_estimates}. 
	The bound \eqref{uk-unif-bound} then follows directly from \eqref{mk-unif-bound} and \eqref{eq:disc_hjb_a_priori_bound}.

	Uniqueness of solutions to \eqref{weakdisc} follows when $F$ is strictly monotone and $G$ is nonnegative in the sense of distributions in $H^{-1}(\Omega)$, with the details being similar to the proof of Theorem \ref{thm2-uniqueness} since the space $V_k$ is a subspace of $H_0^1(\Omega)$. Indeed, this follows through since the linear differential operator featuring in the discrete KFP equation of \eqref{weakdisc} is the adjoint of an operator in the class $W_k$. Therefore, we have access to the discrete maximum principle via Lemma \ref{artdiff} that ensures nonnegativity everywhere in $\Omega$ for the density approximation $m_k$.
\end{proof}

\subsection{Convergence}
In this section we employ a compactness argument to prove the convergence result Theorem \ref{convergence} when unique solutions of \eqref{weakform} are ensured under the hypotheses of Theorem \ref{thm2-uniqueness}. Note that, in addition, the following proof yields an alternative method of showing the existence of weak solutions to \eqref{sys} in the sense of Definition \ref{weakdef}.
\begin{proof}[Proof of Theorem \ref{convergence}]
	Let $\{(u_k,m_k)\}_{k\in\mathbb{N}}$ denote the sequence of solutions given by Theorem \ref{thm2''} with associated vector fields $\tilde{b}_k\in D_pH[u_k]$ ($k\in\mathbb{N}$). Since Theorem \ref{Full'} indicates that the sequences $\{m_k\}_{k\in\mathbb{N}}$, $\{u_k\}_{k\in\mathbb{N}}$ are uniformly bounded in $H_0^1(\Omega)$, while $\{\tilde{b}_k\}_{k\in\mathbb{N}}$ is uniformly bounded in $L^{\infty}(\Omega;\mathbb{R}^n)$, we may pass to subsequences, without change of notation, that satisfy as $k\to \infty$
	\begin{subequations}
		\begin{align}
			&m_k\rightharpoonup m \qquad\text{in } H_0^1(\Omega),  &&  m_k\to m \qquad\text{in } L^q(\Omega)\label{eq:mk},
			\\
			&u_k\rightharpoonup u \hspace{1.5ex}\qquad\text{in } H_0^1(\Omega),  && u_k\to u \hspace{1.5ex}\qquad\text{in }L^q(\Omega)\label{eq:uk},
			\\
			& \tilde{b}_k\rightharpoonup^*\tilde{b}_* \qquad\text{in }L^{\infty}(\Omega;\mathbb{R}^n),  && \label{eq:bk}
		\end{align} 
	\end{subequations}
	for some $m,u\in H_0^1(\Omega)$, some $\tilde{b}_*\in L^{\infty}(\Omega;\mathbb{R}^n)$, and for any $q\in [1,2^*)$ where the critical exponent $2^*=\infty$ if $n=2$ and $2^*=\frac{2n}{n-2}$ if $n\geq 3$. Notice in particular that $m_k\tilde{b}_k$ converges weakly to $m\tilde{b}_*$ in $L^2(\Omega;\mathbb{R}^n)$ as $k\to\infty$.
	
	Let $v\in V_{j}$ be given, for some fixed $j\in \mathbb{N}$. Since the sequence $\{m_k\}_{k\in\mathbb{N}}$ satisfies
	$$\int_{\Omega}(\nu+\gamma_k)\nabla m_{k}\cdot\nabla v +m_{k}\tilde{b}_{k}\cdot\nabla v+\kappa m_k v\,\mathrm{d}x=\langle G,v\rangle_{H^{-1}\times H_0^1} \quad\forall \, k\geq j,$$ and we have the convergence given by \eqref{eq:mk} and \eqref{eq:bk}, along with the uniform boundedness of the sequence $\{\nabla m_k\}_{k\in\mathbb{N}}$ in $L^2(\Omega;\mathbb{R}^n)$, and the vanishing of the artificial diffusion coefficients given by \eqref{vanishing-art-diff}, we obtain in the limit as $k\to\infty$
	$$\int_{\Omega}\nu\nabla m\cdot\nabla v +m\tilde{b}_*\cdot\nabla v+\kappa mv\,\mathrm{d}x=\langle G,v\rangle_{H^{-1}\times H_0^1}\quad\forall v\in V_{j}.$$ Since $j$ was arbitrary, density of the union $\bigcup_{j\in\mathbb{N}}V_{j}$ in $H_0^1(\Omega)$ allows us to conclude 
	\begin{equation}\label{m:eqn}
		\int_{\Omega}\nu\nabla m\cdot\nabla \phi +m\tilde{b}_*\cdot\nabla \phi+\kappa m\phi\,\mathrm{d}x=\langle G,\phi\rangle_{H^{-1}\times H_0^1} \quad\forall \phi\in H_0^1(\Omega).
	\end{equation}
	
	Next, observe that the  boundedness of $\{u_k\}_{k\in\mathbb{N}}$ in $H_0^1(\Omega)$ and~\eqref{bounds:growth} imply that the sequence $\{H(\cdot,\nabla u_k)\}_{k\in\mathbb{N}}$ is bounded in $L^2(\Omega)$. Therefore, there exists $g\in L^2(\Omega)$ such that, by passing to a subsequence without change of notation, we have as $k\to\infty$ 
	\begin{equation}\label{weakHconv}
		H(\cdot,\nabla u_k)\rightharpoonup g\quad\text{in}\quad L^2(\Omega).
	\end{equation}
	For fixed  $j\in\mathbb{N}$, the definition of the sequence $\{u_k\}_{k\in\mathbb{N}}$ implies that for all $k\geq j$,
	\begin{equation}\label{uk-j-eqn}
		\int_{\Omega}(\nu+\gamma_k)\nabla u_{k}\cdot\nabla v+H(x,\nabla u_{k})v+\kappa u_k v\,\mathrm{d}x=\langle F[m_k],v\rangle_{H^{-1}\times H_0^1}\quad \forall v\in V_j.
	\end{equation} 
	Therefore, the convergence given by \eqref{eq:mk}, \eqref{eq:uk}, and \eqref{weakHconv}, together with the vanishing of the artificial diffusion coefficients given by \eqref{vanishing-art-diff}, and the uniform boundedness of the sequence $\{\nabla u_k\}_{k\in\mathbb{N}}$ in $L^2(\Omega;\mathbb{R}^n)$, imply that we obtain
	$$\int_{\Omega}\nu\nabla u\cdot\nabla v+gv+\kappa uv\,\mathrm{d}x=\langle F[m],v\rangle_{H^{-1}\times H_0^1}\quad \forall v\in V_{j},$$ 
	after sending $k\to\infty$ in \eqref{uk-j-eqn}. As $j$ was arbitrary, we conclude from above that 
	\begin{equation}\label{ueqnn}
		\int_{\Omega}\nu\nabla u\cdot\nabla \psi+g\psi+\kappa u\psi\,\mathrm{d}x=\langle F[m],\psi\rangle_{H^{-1}\times H_0^1}\quad \forall \psi\in H_0^1(\Omega),
	\end{equation}
	and in particular
	$\|\nabla u\|_{\Omega}^2=\nu^{-1}\left(\langle F[m],u\rangle_{H^{-1}\times H_0^1}-\int_{\Omega}gu\,\mathrm{d}x-\kappa \|u\|_{\Omega}^2\right).$
	
	On the other hand, the definition of $\{u_k\}_{k\in\mathbb{N}}$ gives 
	\begin{equation*}
		\begin{split}
			\nu\|\nabla u_k\|_{\Omega}^2+\int_{\Omega}H(x,\nabla u_k)u_k\,\mathrm{d}x+\kappa \|u_k\|_{\Omega}^2+\int_{\Omega} \gamma_k|\nabla u_k|^2\,\mathrm{d}x=\langle F[m_k],u_k\rangle_{H^{-1}\times H_0^1}
		\end{split}
	\end{equation*}
	for each $k\in\mathbb{N}$. In view of the convergence given by \eqref{eq:mk}, \eqref{eq:uk}, and \eqref{weakHconv}, along with the uniform boundedness of the sequence $\{\nabla u_k\}_{k\in\mathbb{N}}$ in $L^2(\Omega;\mathbb{R}^n)$, the vanishing of the artificial diffusion coefficients given by \eqref{vanishing-art-diff}, and the Lipschitz continuity of the coupling term $F$, we find that
	\begin{equation}\label{rel1}
		\lim_{k\to\infty}\|\nabla u_k\|_{\Omega}^2=\nu^{-1}\left(\langle F[m],u\rangle_{H^{-1}\times H_0^1}-\int_{\Omega}gu\,\mathrm{d}x-\kappa \|u\|_{\Omega}^2\right)=\|\nabla u\|_{\Omega}^2.
	\end{equation}
	Because \eqref{eq:uk} holds, we deduce via \eqref{rel1} that 	
	\begin{equation}\label{rel1'}
		\lim_{k\to\infty}\|u_k\|_{H^1(\Omega)}=\| u\|_{H^1(\Omega)}.
	\end{equation}
	Since $u_k$ converges weakly to $u$ in $H_0^1(\Omega)$ by \eqref{eq:uk} and we have convergence of norms by \eqref{rel1'}, we deduce strong convergence in $H_0^1(\Omega)$: $u_k\to u$ as $k\to \infty$. Because the mapping $v\mapsto H(\cdot,\nabla v)$ is Lipschitz continuous from $H^1(\Omega)$ into $L^2(\Omega)$, it then follows that \eqref{weakHconv} is in fact strong convergence in $L^2(\Omega)$ with $g=H(\cdot,\nabla u)$. Hence, \eqref{ueqnn} gives the weak HJB equation:
	\begin{equation}\label{u:eqn}
		\int_{\Omega}\nu\nabla u\cdot\nabla \psi+H(x,\nabla u)\psi+\kappa u\psi\,\mathrm{d}x=\langle F[m],\psi\rangle_{H^{-1}\times H_0^1} \quad \forall \psi\in H_0^1(\Omega).
	\end{equation}
	
	To deduce convergence to a weak solution of \eqref{sys}, we need to show that $\tilde{b}_*\in D_pH[u]$ in addition to the established equations \eqref{m:eqn}, \eqref{u:eqn}. But this follows by applying Lemma \ref{inclusion} to $\{(\tilde{b}_k,u_k)\}_{k\in\mathbb{N}}$ after passing to an appropriate subsequence.
	
	In summary, we have shown that a subsequence of the finite element approximations $\{(u_k,m_k)\}_{k\in\mathbb{N}}$ converges to a weak solution $(u,m)$ of \eqref{sys} in the sense that
	\begin{align}\label{convresult}
		\quad 
		u_k\to u\quad\text{in}\quad H_0^1(\Omega),\quad m_k\to m\quad\text{in}\quad L^q(\Omega),\quad 
		m_k\rightharpoonup m\quad\text{in}\quad H_0^1(\Omega),
	\end{align}
	as $k\to\infty$, for any $q\in [1,2^*)$. But uniqueness of the solution of~\eqref{weakform} then implies that the whole sequence $\{(u_k,m_k)\}_{k\in\mathbb{N}}$ converges to the unique solution of~\eqref{weakform}, as required.
\end{proof}

Under the additional hypothesis on the transport vector fields given in Corollary \ref{convergence-cor} we obtain strong convergence of the density approximations in $H_0^1(\Omega)$.
\begin{proof}[Proof of Corollary \ref{convergence-cor}]
	To prove strong convergence of $\{m_k\}_{k\in\mathbb{N}}$ in $H_0^1(\Omega)$, let us consider an arbitrary subsequence $\{m_{k_j}\}_{j\in\mathbb{N}}$ with corresponding subsequence of transport vector fields
	$\{\tilde{b}_{k_j}\}_{j\in\mathbb{N}}$. By the stated hypotheses in the Corollary \ref{convergence-cor}, together with H\"older's inequality and the fact that the sequence $\{\tilde{b}_{k_j}\}_{j\in\mathbb{N}}$ is uniformly bounded in $L^{\infty}(\Omega;\mathbb{R}^n)$, we deduce that there exists $s>n$ such that the sequence $\{\tilde{b}_{k_j}\}_{j\in\mathbb{N}}$ is pre-compact in $L^s(\Omega;\mathbb{R}^n)$. Therefore, there exists a subsequence of $\{\tilde{b}_{k_j}\}_{j\in\mathbb{N}}$, to which we pass without change of notation, that converges in $L^s(\Omega;\mathbb{R}^n)$, $s>n$, to a Lebesgue measurable vector field $\overline{b}:\Omega\to\mathbb{R}^n$ that is in $L^{\infty}(\Omega;\mathbb{R}^n)$.
	From Theorem \ref{convergence}, we know that $m_{k_{j}}\to m$ in $L^r(\Omega)$ as $j\to\infty$ for any $r\in [1,2^*)$ (where we recall $2^*=\frac{2n}{n-2}$ when $n \geq 3$ and $2^*=\infty$ when $n=2$). Hence, by H\"older's inequality and a suitable choice of $r\in [1,2^*)$, we deduce that $m_{k_{j}}\tilde{b}_{k_{j}}\rightarrow m\overline{b}$ strongly in $L^2(\Omega;\mathbb{R}^n)$ as $j\to\infty$.
	Consequently, the weak convergence of $\{m_{k_{j}}\}_{j\in\mathbb{N}}$ to $m$ in $H_0^1(\Omega)$ and hence weak convergence of the gradients $\{\nabla m_{k_{j}}\}_{j\in\mathbb{N}}$ to $\nabla m$ in $L^2(\Omega)$ implies that
	\begin{equation}\label{key-m-conv-term}
		\lim_{j\to\infty}\int_{\Omega}m_{k_{j}}\tilde{b}_{k_{j}}\cdot\nabla m_{k_{j}}\,\mathrm{d}x = \int_{\Omega}m\overline{b}\cdot\nabla m\,\mathrm{d}x.
	\end{equation}
	From the discrete KFP equation \eqref{weakformdisc2}, we obtain
	\begin{equation}\label{disc-grad-m-eqn}
		\begin{split}
			\int_{\Omega}& (\nu+\gamma_{k_{j}})|\nabla m_{k_{j}}|^2\,\mathrm{d}x+\int_{\Omega}m_{k_{j}}\tilde{b}_{k_{j}}\cdot\nabla m_{k_{j}}\,\mathrm{d}x+\kappa \|m_{k_{j}}\|_{\Omega}^2=\langle G,m_{k_{j}}\rangle_{H^{-1}\times H_0^1}.
		\end{split}
	\end{equation}
	We deduce from \eqref{eq:mk},  \eqref{mk-unif-bound}, \eqref{vanishing-art-diff}, and \eqref{disc-grad-m-eqn} that
	\begin{equation}\label{disc-grad-m-lim}
		\lim_{j\to\infty}\|\nabla m_{k_{j}}\|_{\Omega}^2=\nu^{-1}\left(\langle G,m\rangle_{H^{-1}\times H_0^1}-\kappa \|m\|_{\Omega}^2-\int_{\Omega}m\overline{b}\cdot\nabla m\,\mathrm{d}x\right).
	\end{equation} We also deduce from the discrete KFP equation \eqref{weakformdisc2} that $\overline{b}$ and $m$ satisfy
	$$\int_{\Omega}\nu\nabla m\cdot\nabla \phi+m\overline{b}\cdot\nabla \phi+\kappa m\phi\text{ }\mathrm{d}x =\langle G,\phi\rangle_{H^{-1}\times H_0^1}\quad\forall\phi\in H_0^1(\Omega).$$ Hence,
	\begin{equation}\label{cont-grad-m-eqn}
		\|\nabla m\|_{\Omega}^2=\nu^{-1}\left(\langle G,m\rangle_{H^{-1}\times H_0^1}-\kappa \|m\|_{\Omega}^2-\int_{\Omega}m\overline{b}\cdot\nabla m\,\mathrm{d}x\right).
	\end{equation}
	We thus obtain from \eqref{disc-grad-m-lim} and \eqref{cont-grad-m-eqn}, together with the fact that $m_{k_{j}}\to m$ in $L^2(\Omega)$ as $j\to\infty$, that
	\begin{equation}\label{disc-m-H1-norm-lim}
		\lim_{j\to\infty}\|m_{k_{j}}\|_{H^1(\Omega)}=\| m\|_{H^1(\Omega)}.
	\end{equation}
	Since $m_{k_{j}}$ converges weakly to $m$ in $H_0^1(\Omega)$ by Theorem \ref{convergence} and we have convergence of norms by \eqref{disc-m-H1-norm-lim}, we deduce that $m_{k_{j}}\to m$ as $j\to \infty$ strongly in $H_0^1(\Omega)$. The argument above shows that any subsequence of $\{m_k\}_{k\in\mathbb{N}}$ has a further subsequence that converges to $m$ in $H_0^1(\Omega)$, and thus the whole sequence is convergent. This completes the proof.
\end{proof}

\begin{remark}[Convergence in H\"older Norms]
	\emph{When the space dimension $n=2$, the domain $\Omega$ is convex and $G\in L^2(\Omega)$, one can derive a uniform H\"older-norm bound for the approximating sequence  $\{m_k\}_{k\in\mathbb{N}}$ (see \cite[Theorem 3.20]{ko2018finite}). It follows that $\{m_k\}_{k\in\mathbb{N}}$ converges strongly to $m$ in some H\"older space. This likewise holds for the corresponding sequence of value function approximations $\{u_k\}_{k\in\mathbb{N}}$ if $F:H_0^1(\Omega)\to L^2(\Omega)$.} 
\end{remark} 

\section{Numerical Experiments}\label{Sec 7}
As illustrated by the example in Section~\ref{sec:example}, when the Hamiltonian $H$ is nonsmooth, the solution $(u,m)$ of the problem is not necessarily smooth even in the interior of the domain.
The numerical experiments shown below are designed to study the performance of the method on problems with nonsmooth solutions. 
We also consider relaxing the condition on the meshes to being weakly acute rather than strictly acute, and we investigate the singularly perturbed limit $\nu\rightarrow 0$.
Concerning terminology, for a given problem and a given sequence of meshes $\{\mathcal{T}_k\}_{k\in\mathbb{N}}$, we say that the numerical method has \emph{optimal rates of convergence} in some norm if the rate of convergence of the approximations is the same as the rate of convergence of a sequence of best-approximations from the corresponding approximation spaces $\{V_k\}_{k\in\mathbb{N}}$, once the mesh size is sufficiently small. The optimal rate is naturally dependent on the regularity of the solution of the given problem.

\subsection{Set-up of the First Two Experiments}
For the experiments given in sections~\ref{sec:numexp_1} and~\ref{sec:numexp_2}, we take~$\Omega\subset \mathbb{R}^2$ to be the unit square, and we consider the continuous problem \eqref{weakform} where we let the diffusion coefficient $\nu=1$ and the reaction coefficient $\kappa=0$.
The choice of Hamiltonian $H:\overline{\Omega}\times\mathbb{R}^2\to\mathbb{R}$ for both experiments is set to be
\begin{equation}\label{H-test}
	H(x,p)\coloneqq\max_{\alpha\in \overline{B_1(0)}}\left(\alpha\cdot p\right)=|p|\quad\forall (x,p)\in\overline{\Omega}\times\mathbb{R}^2.
\end{equation}
It is clear from \eqref{H-test} that $b(x,{\alpha})\coloneqq \alpha$ for $(x,\alpha) \in \overline{\Omega}\times\overline{B_1(0)}\subset \mathbb{R}^2$ and hence $\|b\|_{C(\overline{\Omega}\times\mathcal{A};\mathbb{R}^2)}=1$. Moreover, the subdifferential  $\partial_pH:\Omega\times\mathbb{R}^2\rightrightarrows \mathbb{R}^2$ is given by 
\begin{equation}\label{subdiffformula}
	\partial_pH(x,p)=
	\begin{cases}
		\left\{\frac{1}{|p|}p\right\}&\text{if } p\neq 0,
		\\
		\overline{B_1(0)} &\text{if } p=0. 
	\end{cases}
	\quad\forall x \in \Omega.
\end{equation}
The choices of the coupling term $F$ and source $G\in H^{-1}(\Omega)$ differ between both experiments.
The computations are performed on a sequence of uniform, conforming, shape-regular, weakly acute meshes on $\Omega$. The formula~\eqref{eta-iota} shows that the inclusion of artificial diffusion is not always necessary when $\nu$ is large enough on strictly acute meshes. In order to test this also for weakly acute meshes, we take the artificial diffusion coefficient to be identically zero.
{To compute the discrete solution on a given mesh, we employ a fixed-point approximation of \eqref{weakdisc} that is based on solving the discrete HJB equation and discrete KFP equation alternately until convergence.}
In each iteration we approximate the discrete HJB equation \eqref{weakformdisc1} via a policy iteration method and we resolve the linear system resulting from the discrete KFP equation \eqref{weakformdisc2} via LU factorization.
For an introduction to policy iteration in general we refer the reader to \cite{bellman1954theory,howard1960dynamic}. We used the open-source finite element software Firedrake \cite{rathgeber2016firedrake} to perform the computations.

\begin{figure}[tb]
	\centering
	\begin{tabular}{c c} 
		\begin{subfigure}[b]{0.45\textwidth}
			\begin{adjustbox}{width=\linewidth}
				\begin{tikzpicture}
					\begin{loglogaxis}[
						title={Value function $H^1$-norm},
						xlabel={mesh size $h$},
						ylabel={relative error},
						xmax=1.6,
						ymax=1.6,
						legend pos=north west,
						ymajorgrids=true,
						grid style=dashed,
						]
						
						\addplot[
						color=blue,
						mark=square,]
						coordinates {(0.7071068,0.8566275)
							(0.3535534,0.5972302)
							(0.1767767,0.3989674)
							(0.08838835,0.2695634)
							(0.04419417,0.1852135)
							(0.02209709,0.128568)
							(0.01104854,0.08950892)
							(0.005524272,0.06202472)
							(0.002762136,0.04228988)
							(0.001381068,0.02766006)
						};
						
						\logLogHalfSlopeTriangle{0.003}{0.02766006}				
					\end{loglogaxis}  
				\end{tikzpicture}
			\end{adjustbox}
		\end{subfigure}
		&
		\begin{subfigure}[b]{0.45\textwidth}
			\begin{adjustbox}{width=\linewidth} 
				\begin{tikzpicture}
					\begin{loglogaxis}[
						title={Transport vector $L^2$-norm},
						xlabel={mesh size $h$},
						ylabel={relative error},
						xmax=1.6,
						ymax=1.6,
						legend pos=north west,
						ymajorgrids=true,
						grid style=dashed,
						]
						
						\addplot[
						color=blue,
						mark=square,]
						coordinates {(0.7071068,0.8308818)
							(0.3535534,0.6119201)
							(0.1767767,0.2581745)
							(0.08838835,0.1448242)
							(0.04419417,0.07989786)
							(0.02209709,0.04542067)
							(0.01104854,0.02205853)
							(0.005524272,0.01171617)
							(0.002762136,0.006294717)
							(0.001381068,0.003145837)
						};
						\IbiglogLogSlopeTriangle{0.003}{0.003145837}
						
					\end{loglogaxis}  
				\end{tikzpicture}
			\end{adjustbox}
		\end{subfigure}
		\\
		\begin{subfigure}[b]{0.45\textwidth}
			\begin{adjustbox}{width=\linewidth} 
				\begin{tikzpicture}
					\begin{loglogaxis}[
						title={Density function $L^2$-norm},
						xlabel={mesh size $h$},
						ylabel={relative error},
						xmax=1.6,
						ymax=1.6,
						legend pos=north west,
						ymajorgrids=true,
						grid style=dashed,
						]
						
						\addplot[
						color=blue,
						mark=square,]
						coordinates {(0.7071068,0.6179921)
							(0.3535534,0.2044256)
							(0.1767767,0.05467305)
							(0.08838835,0.01407674)
							(0.04419417,0.003613242)
							(0.02209709,0.0009656856)
							(0.01104854,0.0002311831)
							(0.005524272,6.104412e-05)
							(0.002762136,1.69423e-05)
							(0.001381068,5.89055e-06)
						};
						\biglogLogTSlopeTriangle{0.003}{5.89055e-06}
					\end{loglogaxis}  
				\end{tikzpicture}
			\end{adjustbox}
		\end{subfigure} 
		&
		\begin{subfigure}[b]{0.45\textwidth}
			\begin{adjustbox}{width=\linewidth} 
				\begin{tikzpicture}
					\begin{loglogaxis}[
						title={Density function $H^1$-norm},
						xlabel={mesh size $h$},
						ylabel={relative error},
						xmax=1.6,
						ymax=1.6,
						legend pos=north west,
						ymajorgrids=true,
						grid style=dashed,
						]
						
						\addplot[
						color=blue,
						mark=square,]
						coordinates {(0.7071068,0.7256666)
							(0.3535534,0.3957164)
							(0.1767767, 0.1983818)
							(0.08838835,0.09951306)
							(0.04419417,0.04977911)
							(0.02209709,0.02502458)
							(0.01104854,0.01237219)
							(0.005524272,0.006134721)
							(0.002762136,0.003015607)
							(0.001381068,0.001414693)
						};
						\IbiglogLogSlopeTriangle{0.003}{0.001414693}
						
					\end{loglogaxis}  
				\end{tikzpicture}
			\end{adjustbox}
		\end{subfigure} 
	\end{tabular} 
	\caption{First experiment -- convergence plots for approximations of the value function, density function, and transport vector. The rate of convergence for $H^1$-norms of the errors of the approximations of the value function is close to the optimal value of $1/2$, and the rate of convergence in the $H^1$-norm for the density function is of order $1$.}
	\label{exp-1:rel-err-plots}
\end{figure}
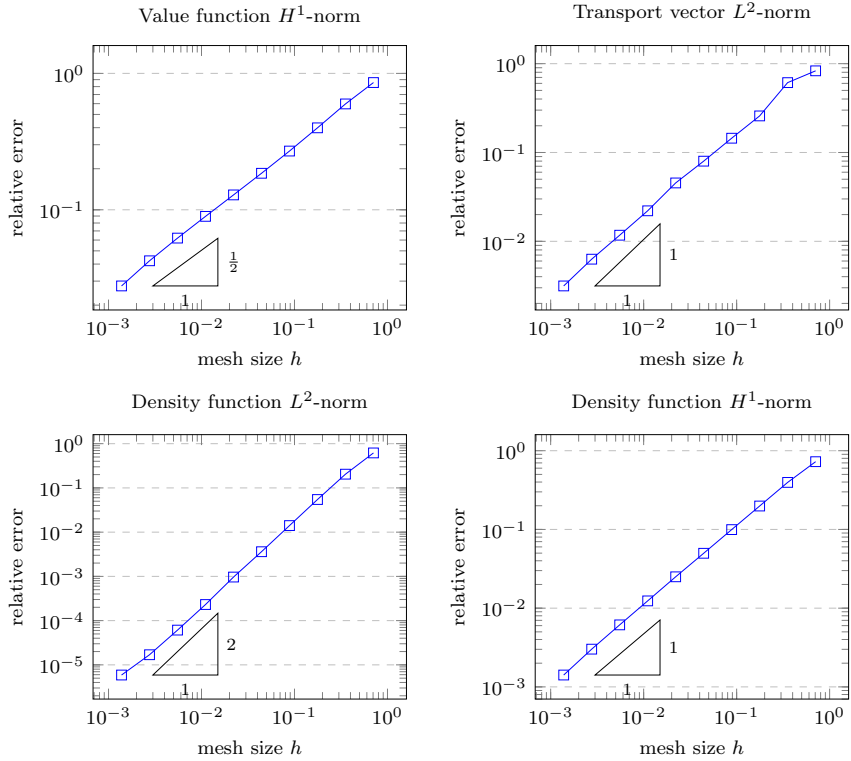

\subsection{First Experiment}\label{sec:numexp_1}
We consider the approximation of a known solution pair that uniquely satisfies \eqref{weakform} with suitable coupling term $F$ and source term $G\in H^{-1}(\Omega)$. 
For this experiment we take the coupling term $F:L^2(\Omega)\to H^{-1}(\Omega)$ defined via
\begin{equation*}\label{F-choice}
	\langle F[v],\psi\rangle_{H^{-1}\times H_0^1}\coloneqq \int_{\Omega}\tanh(v)\psi\,\mathrm{d}x+\langle J,\psi\rangle_{H^{-1}\times H_0^1}\quad\forall \psi\in H_0^1(\Omega),\forall v\in L^2(\Omega),
\end{equation*}
where the functional $J\in H^{-1}(\Omega)$ is given by
\begin{equation*}\label{J-choice}
	\langle J,\psi\rangle_{H^{-1}\times H_0^1}\coloneqq \int_{\Omega}h\psi+\tilde{t}\cdot\nabla \psi\,\mathrm{d}x\quad\forall \psi\in H_0^1(\Omega),
\end{equation*}
with $\tilde{t}:\Omega\to\mathbb{R}^2$ and $h:\Omega\to\mathbb{R}$ defined by
$$\tilde{t}(x,y)\coloneqq\begin{pmatrix}(1+\log(x))y\log(y),\\(1+\log(y))x\log(x)\end{pmatrix},\quad h(x,y)\coloneqq\left|\tilde{t}(x,y)\right|-\tanh\left(xy(1-x)(1-y)\right).$$ Note that $\tilde{t}\in L^2(\Omega;\mathbb{R}^2)$ and $h\in L^2(\Omega)$ so indeed $J\in H^{-1}(\Omega)$.
Next, we take the source term $G\in H^{-1}(\Omega)$ to be 
\begin{equation*}\label{G-choice}
	\langle G,\phi\rangle_{H^{-1}\times H_0^1}\coloneqq \int_{\Omega}r\phi +\tilde{g}\cdot\nabla\phi\,\mathrm{d}x \quad \forall \phi\in H_0^1(\Omega),
\end{equation*}
where $r:\Omega\to\mathbb{R}$ is given by $r(x,y)\coloneqq 2(x(1-x)+y(1-y))$ and $\tilde{g}:\Omega\to\mathbb{R}^2$ is the vector field whose image is $\frac{xy(1-x)(1-y)}{\left|\tilde{t}\right|}\tilde{t}$ whenever $\tilde{t}(x,y)\neq (0,0)^T$ and is  $(0,0)^T$ whenever $\tilde{t}(x,y)=(0,0)^T$.
It can be shown that $F$ is strictly monotone and that $\langle G,\phi\rangle_{H^{-1}\times H_0^1}\geq 0$ if $\phi\in H_0^1(\Omega)$ is nonnegative a.e.\ in $\Omega$. Therefore, the weak formulation~\eqref{weakform} is indeed well-posed according to Theorem \ref{thm2-uniqueness}.
Moreover, the unique solution $(u,m)$ to \eqref{weakform} in this case is given by 
\begin{equation}\label{exact_pair}
	u(x,y)\coloneqq xy\log(x)\log(y),\quad m(x,y)\coloneqq xy(1-x)(1-y),\quad \forall (x,y)\in \Omega.
\end{equation}

In Figure \ref{exp-1:rel-err-plots} we plot the relative errors in various norms versus the mesh-size $h$ for a sequence of finite element approximations obtained from \eqref{weakdisc}. 
Observe that convergence in norm is seen in each plot.
We see that the $H^1$-norm of the error of the approximations of the value function converge at a slower rate than that of the error of the approximations of the density function.
This is due to the fact that $u$ has lower regularity compared to $m$: $m\in C^{\infty}(\Omega)\cap C(\overline{\Omega})$ but $u\notin H^2(\Omega)$.
In fact, $u$ is in the Besov space $H^{\frac{3}{2}-\epsilon}(\Omega)$ for arbitrarily small $0<\epsilon<\frac{1}{2}$.
In Figure \ref{exp-1:rel-err-plots}, we thus see that the convergence rates of the method are optimal.
Moreover, the transport vector field approximations converge strongly in the $L^2$-norm. This is likely due to the transport vector field approximations converging a.e.\ to the vector field $\frac{1}{m}\tilde{g}$, along with the fact that the gradient $\nabla u\neq 0$ a.e.\ in $\Omega$.

\begin{figure}[tb]
	\centering
	\begin{subfigure}[b]{0.49\textwidth}
		\begin{adjustbox}{width=\linewidth} 
			\includegraphics[width=10cm]{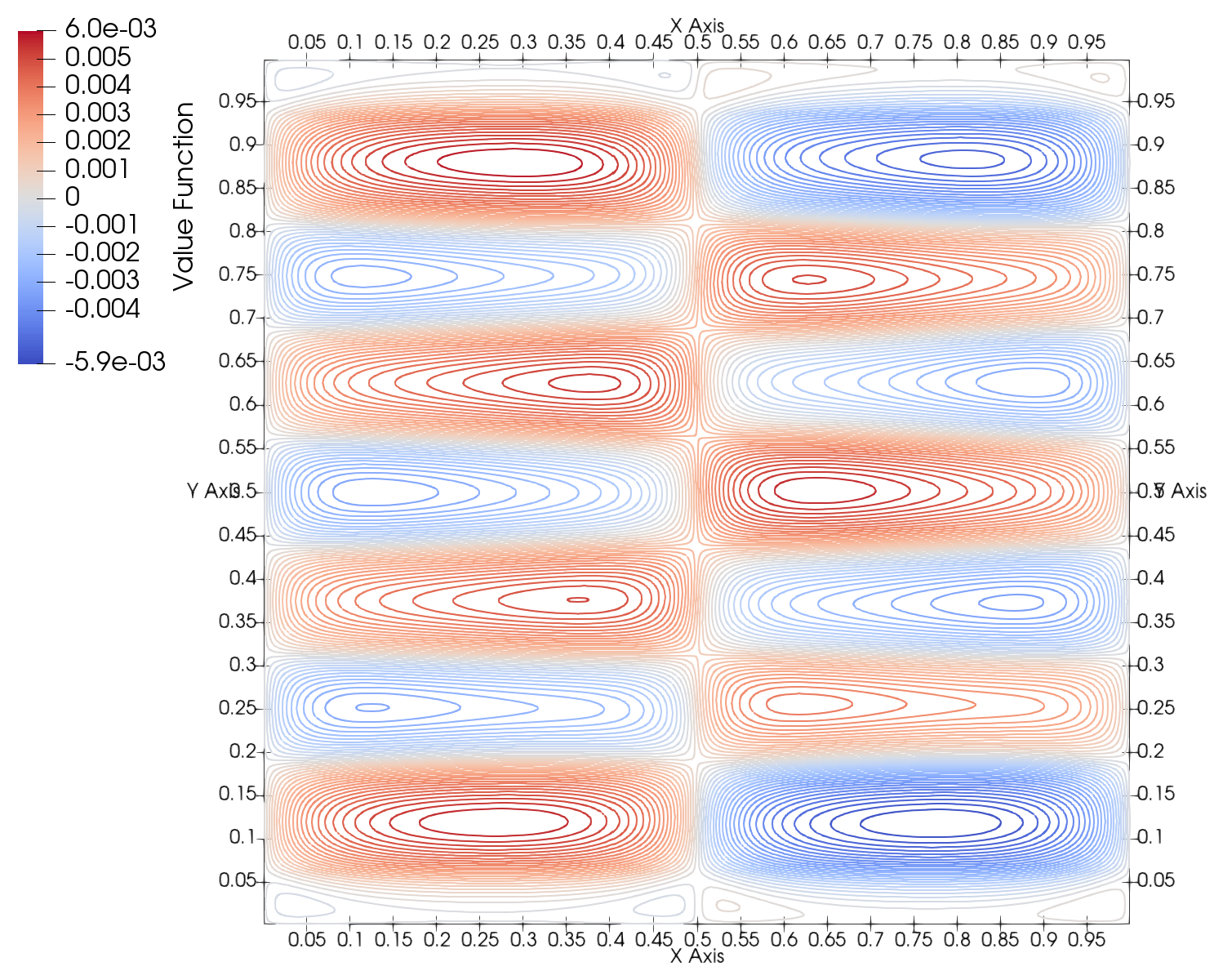}
		\end{adjustbox}
	\end{subfigure} 
	\hfill
	\begin{subfigure}[b]{0.49\textwidth}
		\begin{adjustbox}{width=\linewidth} 
			\includegraphics[width=10cm]{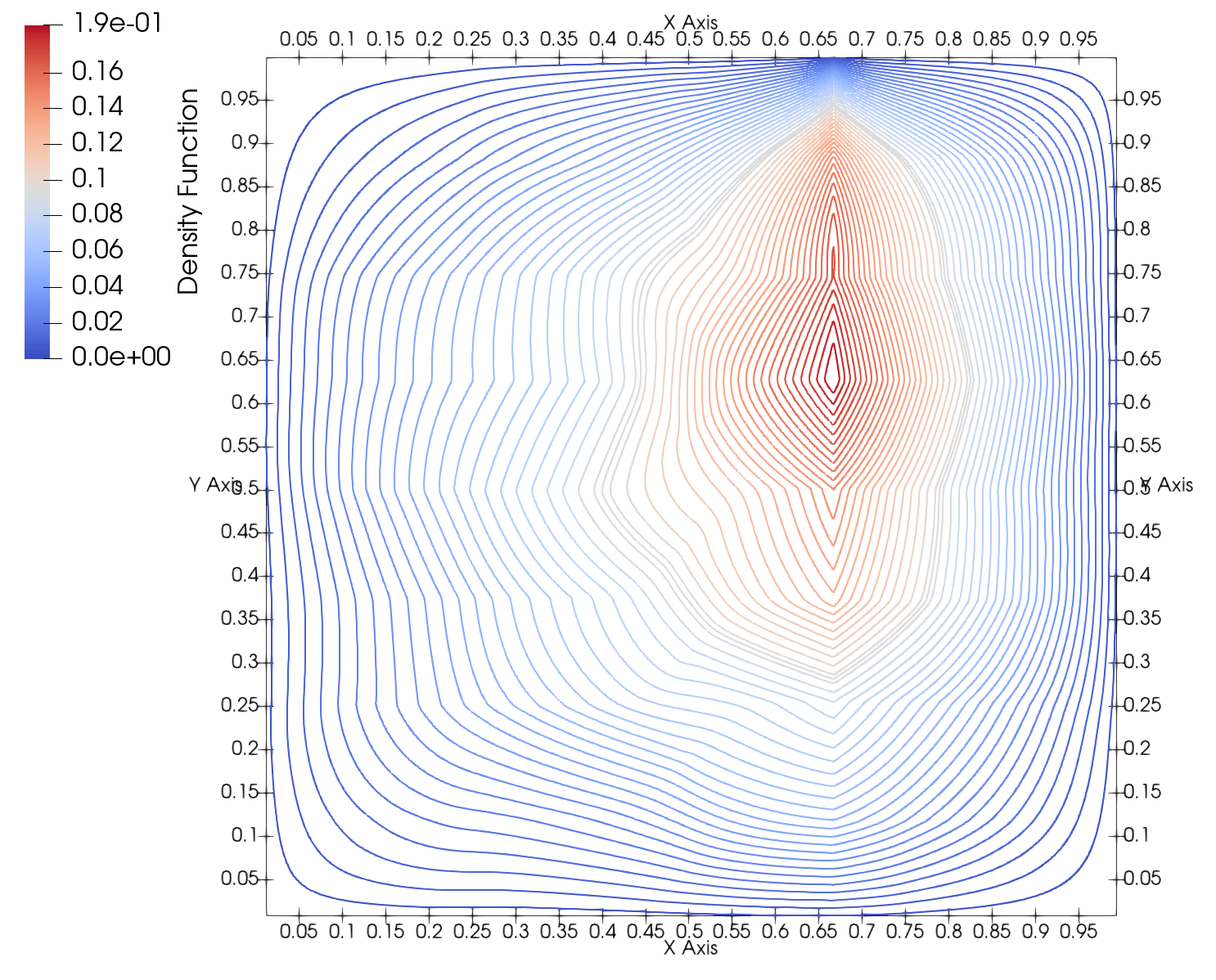}
		\end{adjustbox}
	\end{subfigure} 
	\caption{Second experiment -- approximate contour plot of $u$ (left), $m$ (right) computed on a fine mesh.}
	\label{fig: contour plot reference solution}
\end{figure}	

\begin{figure}[tb]
	\begin{tabular}{c c}
		\begin{subfigure}[b]{0.45\textwidth}
			\begin{adjustbox}{width=\linewidth}
				\begin{tikzpicture}
					\begin{loglogaxis}[
						title={Value function $H^1$-norm},
						xlabel={mesh size $h$},
						ylabel={relative error},
						xmax=1.6,
						ymax=1.6,
						legend pos=north west,
						ymajorgrids=true,
						grid style=dashed,
						]
						
						\addplot[
						color=blue,
						mark=square,]
						coordinates {(0.7071068,0.9994682)
							(0.3535534,1.009994)
							(0.1767767,0.6551388)
							(0.08838835,0.55465)
							(0.04419417,0.2889046)
							(0.02209709,0.1460316)
							(0.01104854,0.0730617)
							(0.005524272,0.03626284)
							(0.002762136,0.01776119)
							(0.001381068,0.008306918)
							
						};
						\IbiglogLogSlopeTriangle{0.003}{0.008306918}
					\end{loglogaxis}  
				\end{tikzpicture}
			\end{adjustbox}
		\end{subfigure}
		&
		\begin{subfigure}[b]{0.45\textwidth}
			\begin{adjustbox}{width=\linewidth} 
				\begin{tikzpicture}
					\begin{loglogaxis}[
						title={Transport vector $L^2$-norm},
						xlabel={mesh size $h$},
						ylabel={relative error},
						xmax=1.6,
						ymax=1.6,
						legend pos=north west,
						ymajorgrids=true,
						grid style=dashed,
						]
						
						\addplot[
						color=blue,
						mark=square,]
						coordinates {(0.7071068,1.313122)
							(0.3535534,1.390952)
							(0.1767767,0.7438815)
							(0.08838835,0.5601214)
							(0.04419417,0.4301933)
							(0.02209709,0.2810091)
							(0.01104854,0.1704523)
							(0.005524272,0.09916042)
							(0.002762136,0.05424726)
							(0.001381068,0.02847871)
							
						};
						\IlogLogSlopeTriangle{0.003}{0.02847871}
					\end{loglogaxis}  
				\end{tikzpicture}
			\end{adjustbox}
		\end{subfigure}
		\\
		\begin{subfigure}[b]{0.45\textwidth}
			\begin{adjustbox}{width=\linewidth} 
				\begin{tikzpicture}
					\begin{loglogaxis}[
						title={Density function $L^2$-norm},
						xlabel={mesh size $h$},
						ylabel={relative error},
						xmax=1.6,
						ymax=1.6,
						legend pos=north west,
						ymajorgrids=true,
						grid style=dashed,
						]
						
						\addplot[
						color=blue,
						mark=square,]
						coordinates {(0.7071068,0.6978075)
							(0.3535534,0.2981201)
							(0.1767767,0.08851999)
							(0.08838835,0.02893447)
							(0.04419417,0.01087881)
							(0.02209709,0.004313171)
							(0.01104854,0.001461434)
							(0.005524272,0.0004724728)
							(0.002762136,0.0001607648)
							(0.001381068,5.233398e-05)
							
						};
						
						\logLogTSlopeTriangle{0.003}{5.233398e-05}
						
					\end{loglogaxis}  
				\end{tikzpicture}
			\end{adjustbox}
		\end{subfigure} 
		&
		\begin{subfigure}[b]{0.45\textwidth}
			\begin{adjustbox}{width=\linewidth} 
				\begin{tikzpicture}
					\begin{loglogaxis}[
						title={Density function $H^1$-norm},
						xlabel={mesh size $h$},
						ylabel={relative error},
						xmax=1.6,
						ymax=1.6,
						legend pos=north west,
						ymajorgrids=true,
						grid style=dashed,
						]
						
						\addplot[
						color=blue,
						mark=square,]
						coordinates {(0.7071068,0.9011915)
							(0.3535534,0.655363)
							(0.1767767,0.4611463)
							(0.08838835,0.3201307)
							(0.04419417,0.2300744)
							(0.02209709,0.160974)
							(0.01104854,0.1124735)
							(0.005524272,0.07765708)
							(0.002762136,0.05296566)
							(0.001381068,0.03471321)
							
						};
						\logLogHalfSlopeTriangle{0.003}{0.03471321}
					\end{loglogaxis}  
				\end{tikzpicture}
			\end{adjustbox}
		\end{subfigure} 
	\end{tabular}
	\caption{Second experiment -- convergence plots for approximations of the value function, density function, and transport vector.
		We observe a first-order rate of convergence for the $H^1$-norms of the error of the approximations of the value function, and a rate of order approximately $1/2$ for the $H^1$-norm errors of the approximations of the density function.}
	\label{exp-2:rel-err-plots}
\end{figure}
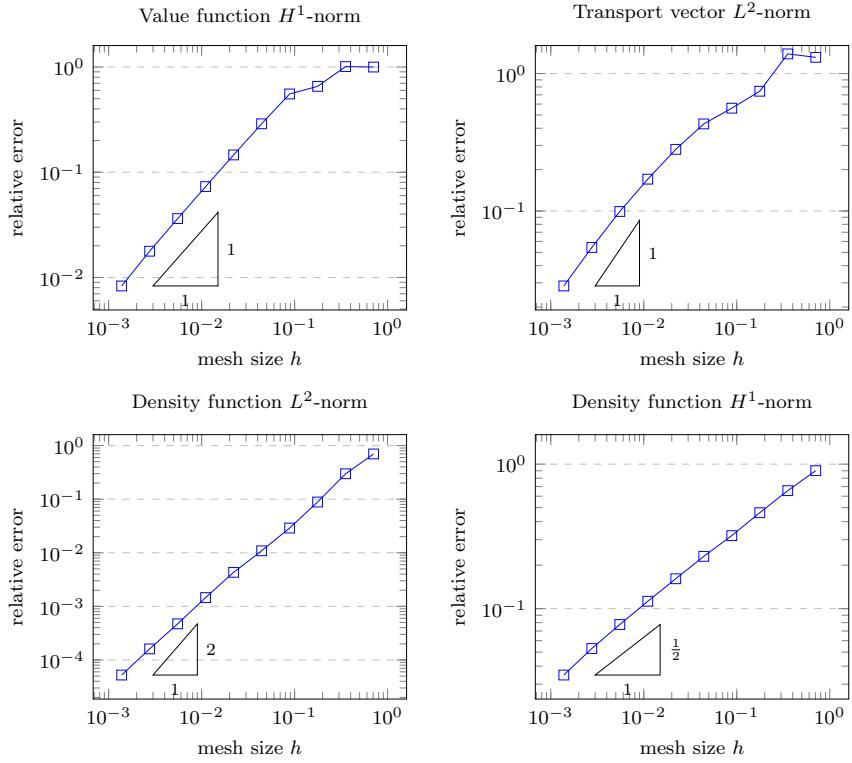

\subsection{Second Experiment}\label{sec:numexp_2}
For this experiment, we choose the coupling term $F$ and source $G\in H^{-1}(\Omega)$ in such a way that the conditions of Theorem \ref{thm2-uniqueness} still hold so that the problem still admits a unique weak solution, but in this example the exact solution is not known explicitly.
We set the coupling term $F:L^2(\Omega)\to H^{-1}(\Omega)$ via $$\langle F[v],\psi\rangle_{H^{-1}\times H_0^1}\coloneqq\int_{\Omega}(\arctan(v)+2\text{sgn}((x-0.5)\cos(8\pi y)))\psi\,\mathrm{d}x$$ for all $\psi\in H_0^1(\Omega)$ and $v\in L^2(\Omega)$, and we let $G\in H^{-1}(\Omega)$ be given by $$\langle G,\phi\rangle_{H^{-1}\times H_0^1}\coloneqq\int_{\Omega}\frac{1}{2}\left(\text{sgn}(\sin(4\pi x)\sin(4\pi y))+1\right)\phi+\tilde{c}\cdot\nabla \phi\,\mathrm{d}x \quad \forall \phi\in H_0^1(\Omega), $$ where $\tilde{c}:\Omega\to\mathbb{R}^2$ is defined by
\begin{equation*}
	\tilde{c}(x,y)\coloneqq 
	\begin{cases}
		y(1,0)^T \quad &\text{if }0<x<2/3,
		\\
		y^2(-1,0)^T \quad &\text{otherwise}.
	\end{cases}
\end{equation*}
It is easy to show that $F$ is strictly monotone and that $\langle G,\phi\rangle_{H^{-1}\times H_0^1}\geq 0$ for all $\phi\in H_0^1(\Omega)$ that is nonnegative a.e.\ in $\Omega$. The exact solution is not known, so we measure the errors using a computed reference solution on a very fine mesh. To illustrate the discrete reference solution that we will consider, in Figure \ref{fig: contour plot reference solution} we display contour plots of the computed reference solution. We note that the displayed approximation for $m$ is nonnegative everywhere in $\Omega$. 
In Figure \ref{exp-2:rel-err-plots} we plot the relative errors versus mesh size $h$. These results were obtained with respect to a discrete reference solution $(u,m)$ that was computed on a much finer mesh than the approximation displayed in Figure \ref{fig: contour plot reference solution}. The convergence rate of the $H^1$-norm of the error in the approximations of the value function approximations is now 1, which is optimal as $u$ is smooth.
The convergence rate of $H^1$-norm errors of the approximation of the density is close to $1/2$, which is the expected optimal rate given the limited regularity of the density function $m$ as a result of the line singularity in the right-hand side source term $G$.

\subsection{Third Experiment} 
For the final experiment, we investigate the robustness of the method in the singularly perturbed limit, where the diffusion parameter $\nu$ becomes small.
Let $\Omega = (-1,1) $, and let $H(x,p)\coloneqq \abs{p}$. Let $F\colon L^2(\Omega)\rightarrow L^2(\Omega)$ be defined by $F[m](x)=m(x)+1$ for all $m\in L^2(\Omega)$, and let $G(x)=1$ for all $x\in \Omega$.
We set $\kappa =0$, and we consider the problem for various values of $\nu>0$. For each $\nu>0$, the exact solution is then given by
\begin{equation}
	\begin{aligned}
		u(x) &= -\abs{x} - \frac{x^2}{2}+ \frac{\nu (\nu+1)}{2} \mathrm{e}^{\frac{\abs{x}-1}{\nu}} + a \mathrm{e}^{-\frac{\abs{x}}{\nu}} + b,\\ 
		m(x) &= \nu+\abs{x}-(\nu+1) \mathrm{e}^{\frac{\abs{x}-1}{\nu}},
	\end{aligned}
\end{equation}  
with constants $a \coloneqq \frac{\nu(\nu+1)}{2}\mathrm{e}^{-\frac{1}{\nu}}-\nu$ and
$b\coloneqq \frac{3-\nu(\nu+1)\big(1+ \mathrm{e}^{-2/\nu}\big)}{2}+ \nu\mathrm{e}^{-\frac{1}{\nu}} $.
Note that the density $m $ is not continuously differentiable, despite all problem data being smooth with the sole exception of the Hamiltonian.
Furthermore, for small $\nu$, the density~$m$ exhibits a sharp boundary layer close to the boundary, and $m(x)\rightarrow m_*(x)\coloneqq \abs{x}$ for all $x\in \Omega$ in the limit $\nu\rightarrow 0$.
Observe that $m_*\notin H^1_0(\Omega)$ since $m_*$ does not satisfy the homogeneous Dirichlet boundary condition, and thus it is clear that we cannot expect the convergence of $m_k$ to $m$ in the $H^1$-norm to be robust with respect to~$\nu$ as $\nu$ becomes small.
Considering instead the $L^2$-norm of the error $\lVert m-m_k\rVert_\Omega$, the expected optimal effective rate of convergence is of order $1/2$ on quasi-uniform meshes in the coarse mesh regime where $\nu \ll h_k $, since it can be proved that $\inf_{v_k\in V_k}\lVert m_* - v_k \rVert_{\Omega}\gtrsim h_k^{1/2}$ for quasi-uniform sequences of meshes.
Given the choice of the coupling term~$F$, we also expect an effective rate of convergence of order $1/2$ for $\lVert u-u_k \rVert_{H^1(\Omega)}$ when $\nu \ll h_k $.

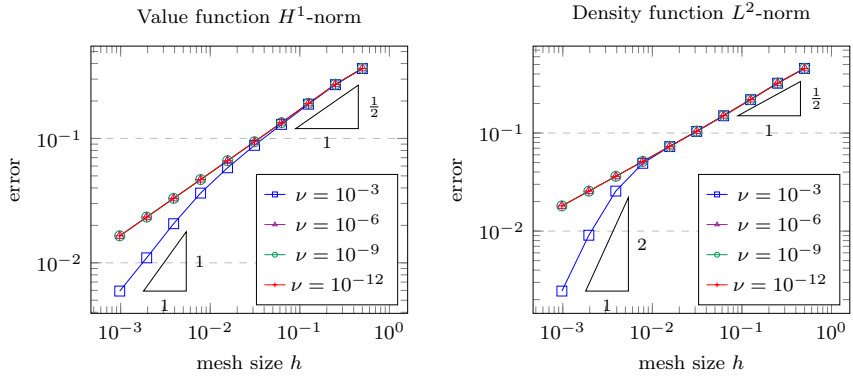
\begin{figure}[tb]
	\begin{tabular}{c c}

		\begin{subfigure}[b]{0.45\textwidth}
			\begin{adjustbox}{width=\linewidth} 
				\begin{tikzpicture}
					\begin{loglogaxis}[title={Value function $H^1$-norm},
						xlabel={mesh size $h$},
						ylabel={error},
						legend pos=south east,
						xmax=1.6,
						ymajorgrids=true,
						grid style=dashed]
						
						\addplot[color=blue,mark=square,]
						coordinates {
							(5.000000000000000e-01,      3.646720395717836e-01)
							(2.500000000000000e-01,      2.705781411675645e-01)
							(1.250000000000000e-01,      1.885252816861381e-01)
							(6.250000000000000e-02,      1.296909544827354e-01)
							(3.125000000000000e-02,      8.804592417210899e-02)
							(1.562500000000000e-02,      5.809020059901655e-02)
							(7.812500000000000e-03,      3.635474626689576e-02)
							(3.906250000000000e-03,      2.062339116500707e-02)
							(1.953125000000000e-03,      1.096481491807301e-02)
							(9.765625000000000e-04,      5.925014692570926e-03)
							
						};
						\IlogLogSlopeTriangle{1.8e-3}{5.925014692570926e-03}
						\logLogHalfSlopeTriangle{9e-2}{1.2e-01}	
						
						\addplot[color=plum,mark=triangle,]
						coordinates {
							(5.000000000000000e-01,      3.659189155845453e-01)
							(2.500000000000000e-01,      2.725909164581162e-01)
							(1.250000000000000e-01,      1.914989947662144e-01)
							(6.250000000000000e-02,      1.339459253190721e-01)
							(3.125000000000000e-02,      9.400400983481011e-02)
							(1.562500000000000e-02,      6.618132546968040e-02)
							(7.812500000000000e-03,      4.668380217136445e-02)
							(3.906250000000000e-03,      3.296253251343390e-02)
							(1.953125000000000e-03,      2.328150798208083e-02)
							(9.765625000000000e-04,      1.643984332916185e-02)
						};	
						\addplot[color=fgreen,mark=o]
						coordinates {
							(5.000000000000000e-01,      3.659201668776643e-01)
							(2.500000000000000e-01,      2.725929397255077e-01)
							(1.250000000000000e-01,      1.915019970321351e-01)
							(6.250000000000000e-02,      1.339502611696746e-01)
							(3.125000000000000e-02,      9.401020322078638e-02)
							(1.562500000000000e-02,      6.619012718826661e-02)
							(7.812500000000000e-03,      4.669627934182971e-02)
							(3.906250000000000e-03,      3.298019700704939e-02)
							(1.953125000000000e-03,      2.330649759780913e-02)
							(9.765625000000000e-04,      1.647517489749288e-02)
						};					
						\addplot[color=red,mark=+,]
						coordinates {
							(5.000000000000000e-01,      3.659201681289618e-01)
							(2.500000000000000e-01,      2.725929417487855e-01)
							(1.250000000000000e-01,      1.915020000344291e-01)
							(6.250000000000000e-02,      1.339502655056034e-01)
							(3.125000000000000e-02,      9.401020941439225e-02)
							(1.562500000000000e-02,      6.619013599060559e-02)
							(7.812500000000000e-03,      4.669629182075258e-02)
							(3.906250000000000e-03,      3.298021467649806e-02)
							(1.953125000000000e-03,      2.330652260144321e-02)
							(9.765625000000000e-04,      1.647521026874800e-02)
						};
						\legend{$\nu=10^{-3}$,$\nu=10^{-6}$,$\nu=10^{-9}$,$\nu=10^{-12}$}
					\end{loglogaxis}
				\end{tikzpicture}
			\end{adjustbox}
		\end{subfigure}
		&
		
		\begin{subfigure}[b]{0.45\textwidth}
			\begin{adjustbox}{width=\linewidth} 
				\begin{tikzpicture}
					\begin{loglogaxis}[
						title={Density function $L^2$-norm},
						xlabel={mesh size $h$},
						xmax=1.6,
						ylabel={error},
						legend pos=south east,
						ymajorgrids=true,
						grid style=dashed]
						
						\addplot[color=blue,mark=square,]
						coordinates {
							(5.000000000000000e-01,      4.557539540878023e-01)
							(2.500000000000000e-01,      3.220596476845037e-01)
							(1.250000000000000e-01,      2.189134627350878e-01)
							(6.250000000000000e-02,      1.499674791135216e-01)
							(3.125000000000000e-02,      1.040084310545418e-01)
							(1.562500000000000e-02,      7.271555681001779e-02)
							(7.812500000000000e-03,      4.913716390664095e-02)
							(3.906250000000000e-03,      2.557886023410398e-02)
							(1.953125000000000e-03,      9.076056737908947e-03)
							(9.765625000000000e-04,      2.446267503933427e-03)	
						};
						\logLogTSlopeTriangle{1.8e-3}{2.446267503933427e-03}	
						\logLogHalfSlopeTriangle{9e-2}{1.499674791135216e-01}				
						
						\addplot[color=plum,mark=triangle,]
						coordinates {
							(5.000000000000000e-01,      4.564347799375795e-01)
							(2.500000000000000e-01,      3.227479182793697e-01)
							(1.250000000000000e-01,      2.194921992342021e-01)
							(6.250000000000000e-02,      1.504113045977318e-01)
							(3.125000000000000e-02,      1.043289608294860e-01)
							(1.562500000000000e-02,      7.299194879533015e-02)
							(7.812500000000000e-03,      5.132590243160483e-02)
							(3.906250000000000e-03,      3.618927899125506e-02)
							(1.953125000000000e-03,      2.555268093990648e-02)
							(9.765625000000000e-04,      1.805532544788332e-02)	
						};
						
						\addplot[color=fgreen,mark=o]
						coordinates {
							(5.000000000000000e-01,      4.564354639029852e-01)
							(2.500000000000000e-01,      3.227486114900419e-01)
							(1.250000000000000e-01,      2.194927859304739e-01)
							(6.250000000000000e-02,      1.504117607745381e-01)
							(3.125000000000000e-02,      1.043292997857388e-01)
							(1.562500000000000e-02,      7.299219464020489e-02)
							(7.812500000000000e-03,      5.132607851040193e-02)
							(3.906250000000000e-03,      3.618940428292818e-02)
							(1.953125000000000e-03,      2.555276978522527e-02)
							(9.765625000000000e-04,      1.805538831853940e-02)
						};	
						
						\addplot[color=red,mark=+]
						coordinates {
							(5.000000000000000e-01, 4.564354645869538e-01)
							(2.500000000000000e-01, 3.227486121832575e-01)
							(1.250000000000000e-01, 2.194927865171781e-01)
							(6.250000000000000e-02, 1.504117612307272e-01)
							(3.125000000000000e-02, 1.043293001247134e-01)
							(1.562500000000000e-02, 7.299219488607660e-02)
							(7.812500000000000e-03, 5.132607868651934e-02)
							(3.906250000000000e-03, 3.618940440827464e-02)
							(1.953125000000000e-03, 2.555276987414875e-02)
							(9.765625000000000e-04, 1.805538838152052e-02)	
						};		
						\legend{$\nu=10^{-3}$,$\nu=10^{-6}$,$\nu=10^{-9}$,$\nu=10^{-12}$}
					\end{loglogaxis}
				\end{tikzpicture}
			\end{adjustbox}
		\end{subfigure}
		
	\end{tabular}
	\caption{Third experiment -- the error  $\lVert m-m_k \rVert_\Omega$ and $\lVert u-u_k \rVert_{H^1(\Omega)}$ for various values of the diffusion coefficient $\nu$. In the coarse-mesh regime $\nu\ll h_k$, the effective rate of convergence is of order $1/2$ for both $\lVert m-m_k \rVert_\Omega$ and $\lVert u-u_k \rVert_{H^1(\Omega)}$ as a result of the strong boundary layer in $m$ that appears in the singularly perturbed limit.}
	\label{fig:third_experiment}
\end{figure}

We apply the method on uniform meshes on $\Omega$ for $\nu\in \{10^{-3},10^{-6},10^{-9},10^{-12}\}$.
Note that in this example, the inclusion of some stabilization, such as artificial diffusion, becomes necessary for stability when $\nu$ is small, so we set the artificial diffusion parameter~$\gamma_k = \max(h_k/2-\nu,0)$.
\cref{fig:third_experiment} shows the errors $\lVert m-m_k \rVert_{\Omega}$ and $\lVert u-u_k\rVert_{H^1(\Omega)}$ for the various values of $\nu$ that are attained on relatively coarse meshes. 
We observe the expected convergence $\lVert m-m_k \rVert_{\Omega}+\lVert u-u_k\rVert_{H^1(\Omega)}\lesssim h_k^{1/2}$ in the regime where $\nu \ll h_k$, which remains robust with respect to $\nu$. 
For the case where $\nu=10^{-3}$, we also observe the final asymptotic convergence rates of order $2$ for $\lVert m-m_k \rVert_{\Omega}$ and order $1$ for $\lVert u-u_k \rVert_{H^1(\Omega)}$ respectively on the finer meshes where $h_k\approx \nu$.

\section{Conclusion}
\label{sec:conclusions}

We have introduced the notion of Mean Field Game Partial Differential Inclusions as a generalization of Mean Field Game systems with possibly nondifferentiable Hamiltonians.
We established the existence and uniqueness of weak solutions of the MFG PDI system \eqref{sys} under appropriate hypotheses.  
We proved the well-posedness and convergence of numerical approximations of the system by a monotone FEM. 
The numerical experiments suggest that the method can achieve optimal rates of convergence for the different solution components $u$ and $m$ even in the case of nonsmooth solution pairs, and also suggest convergence in the small viscosity limit.

\section*{Acknowledgements}
We would like to acknowledge the assistance of Jack Betteridge in enabling the use of Firedrake on the UCL Myriad High Performance Computing Facility (Myriad@UCL). Moreover, the authors acknowledge the use of the UCL Myriad High Performance Computing Facility (Myriad@UCL), and associated support services, in the completion of this work.
\medskip

\appendix
\section{Proofs of Lemmas~\ref{unif} and~\ref{thm1}}\label{appendix-a}

\begin{proof}[Proof of Lemma \ref{unif}]
	We begin by noting that each operator $L\in\mathcal{G}(C_0)$ and its adjoint $L^*$ are invertible owing to the Weak Maximum Principle and the Fredholm Alternative, see \cite[Theorem 8.3]{gilbarg2015elliptic}. Moreover, a standard application of the Hahn-Banach Theorem implies that $L\in\mathcal{G}(C_0)$ satisfies
	\begin{equation}\label{main-ineq}
		\left\|{L^*}^{-1}\right\|_{\mathcal{L}\left(H^{-1}(\Omega),H_0^1(\Omega)\right)}\leq C_1,
	\end{equation}
	for some constant $C_1>0$ if and only if $\left\|{L}^{-1}\right\|_{\mathcal{L}\left(H^{-1}(\Omega),H_0^1(\Omega)\right)}\leq C_1$.
	Therefore, it suffices to prove \eqref{main-ineq}.
	Note that Poincar\'{e}'s inequality implies that there is a constant $C$ depending only on $C_0$ and $\Omega$ such that, if $L\in \mathcal{G}(C_0)$, and $u\in H^1_0(\Omega)$ solves $L^* u = f$ for some $f \in H^{-1}(\Omega)$, then we have the Garding inequality
	\begin{equation}\label{eq:garding}
		\|u\|_{H^1(\Omega)} \leq C \left( \| f \|_{H^{-1}(\Omega)} + \|u\|_{\Omega} \right).
	\end{equation}
	We now prove the result by contradiction, so suppose that~\eqref{main-ineq} is false, i.e.\ there exist sequences $\left\{u_j\right\}_{j\in\mathbb{N}} \subset H^1_0(\Omega)$ and 
	$\left\{f_j\right\}_{j\in\mathbb{N}} \subset H^{-1}(\Omega)$ such that $L_j^* u_j = f_j$ in $H^{-1}(\Omega)$ for some $L_j \in \mathcal{G}(C_0)$, with $\lVert u_j \rVert_{H^1_0(\Omega)}\geq j \lVert f_j \rVert_{H^{-1}(\Omega)}$ for all $j\in\mathbb{N}$. 
	In particular, for each $j\in\mathbb{N}$, we have 
	\begin{equation}\label{eq:L_k}
		\langle L_j^* \,u_j,v\rangle_{H^{-1}\times H_0^1}= \int_{\Omega} \nu \nabla u_j \cdot \nabla v + u_j \tilde{b}_j{\cdot} \nabla v +c_ju_jv\, \mathrm{d}x = \langle f_j,v\rangle_{H^{-1}\times H_0^1}
	\end{equation}
	for all $v\in H_0^1(\Omega)$, for some $\tilde{b}_j \in L^{\infty}(\Omega;\mathbb{R}^n)$ and $c_j \in L^{\infty}(\Omega)$ that satisfy $\|\tilde{b}_j\|_{L^{\infty}(\Omega;\mathbb{R}^n)}+\|c_j\|_{L^{\infty}(\Omega)} \leq C_0$ and $c_j\geq 0$ a.e.\ in $\Omega$.
	Without loss of generality, we can also suppose that $\lVert u_j \rVert_{L^2(\Omega)}=1$ for all $j\in\mathbb{N}$. Then, Garding's inequality~\eqref{eq:garding} implies that, for $j$ sufficiently large, $\|f_j\|_{H^{-1}(\Omega)} \leq\frac{C}{j-C}$ and thus $f_j \rightarrow 0$ in $H^{-1}(\Omega)$ as $j\rightarrow \infty$, as well as $ \sup_{j\in\mathbb{N}}\|u_j\|_{H^1(\Omega)} < \infty$.
	Therefore, there exist subsequences, to which we pass without change of notation, such that $u_j \rightharpoonup u$ in $H^1_0(\Omega)$, $u_j \to u$ in $L^2(\Omega)$, $b_j \rightharpoonup^* \tilde{b}$ in $L^\infty(\Omega;\mathbb{R}^n)$ and $c_j\rightharpoonup^* c$ in $L^{\infty}(\Omega) $ as $j\to \infty$. Note also that the limit $c$ is nonnegative a.e.\ in $\Omega$, which follows from Mazur's Theorem and from the fact that $c_j \rightharpoonup c$ in $L^p(\Omega)$ for all $p<\infty$ as $\Omega$ is bounded.
	By passing to the limit in~\eqref{eq:L_k}, we deduce that 
	\begin{equation}\label{eq:weak_max_contradiction}
		\int\limits_\Omega \nu \nabla u \cdot \nabla v + u\tilde{b}{\cdot} \nabla v +c uv\,\mathrm{d}x = 0 \quad \forall v \in H^1_0(\Omega).
	\end{equation}
	Thus~\eqref{eq:weak_max_contradiction} implies that $u = 0$ a.e.\ in $\Omega$, which contradicts $\|u\|_{\Omega}=\lim_{j\to\infty}\lVert u_j \rVert_{\Omega}=1$, thereby completing the proof.
\end{proof} 

\begin{proof}[Proof of Lemma \ref{thm1}]
	Let $m\in L^2(\Omega)$ be fixed. We start by showing the existence of a solution of~\eqref{weaku}.
	We define the operator $K\colon H^1_0(\Omega)\to H^1_0(\Omega)$ by $v\mapsto K[v]\coloneqq w$ where $w\in H^1_0(\Omega)$ is the unique solution of
	\begin{equation}\label{eq:K_operator}
		\int_{\Omega}\nu\nabla w\cdot\nabla \psi+\kappa w\psi\mathrm{d}x=\langle F[m],\psi\rangle_{H^{-1}\times H_0^1}-\int_{\Omega}H(x,\nabla v)\psi\mathrm{d}x \;\forall \psi\in H_0^1(\Omega).
	\end{equation}
	Note that $u$ solves~\eqref{weaku} if and only if $u=K[u]$ is a fixed point of $K$.
	The Lipschitz continuity of $H$, c.f.\ \eqref{bounds}, and the compactness of the embedding $L^2(\Omega)$ into $H^{-1}(\Omega)$ imply that $K$ is a continuous and compact operator from $H^1_0(\Omega)$ into itself. 
	We now show that the set $\{ w = \lambda K[w], \; 0\leq \lambda \leq 1\}$ is bounded in $H^1_0(\Omega)$, which implies that $K$ satisfies the hypotheses of Schaefer's Fixed Point Theorem~\cite[p.\ 502, Theorem 4]{evans2010partial} and implies the existence of a fixed point $u=K[u]$.
	Note that $w=\lambda K[w]$ for some $0\leq \lambda \leq 1$ if and only if $-\nu\Delta w+\lambda H(\cdot,\nabla w)+\kappa w=\lambda F[m]$ in $H^{-1}(\Omega)$.
	Hence, $w=\lambda K[w]$ implies that there exists $\alpha^* \in \Lambda[w]$ such that $H(x,\nabla w) = b(x,\alpha^*(x)){\cdot}\nabla w(x) - f(x,\alpha^*(x))$ for a.e.\ $x\in \Omega$, and thus $w$ solves the elliptic equation
	\begin{multline*}
		\langle L_{\lambda} w ,\psi\rangle_{H^{-1}\times H_0^1}\coloneqq 
		\int_{\Omega}\nu\nabla w\cdot\nabla \psi+\lambda b(x,{\alpha^*(x)})\cdot\nabla w\psi+\kappa w\psi\mathrm{d}x
		\\ =\int_{\Omega}\lambda f(x,\alpha^*(x))\psi\mathrm{d}x+\langle \lambda F[m],\psi\rangle_{H^{-1}\times H_0^1} \quad \forall \psi \in H^1_0(\Omega).
	\end{multline*}
	Since $\lambda\in [0,1]$, we have $L_{\lambda}\in \mathcal{G}(C_0)$ for a constant $C_0\geq 0$ independent of $\lambda$. Therefore, Lemma~\ref{unif} implies that $\lVert w \rVert_{H^1_0(\Omega)}\leq C_*\left(\|f\|_{C(\mathcal{A}\times\overline{\Omega})}+\|F[m]\|_{H^{-1}(\Omega)}\right) $ for some constant $C_*$ independent of $\lambda$ and $w$.
	This shows that $\{ w = \lambda K[w], \; 0\leq \lambda \leq 1\}$ is bounded and that there exists a solution of \eqref{weaku}.
	A similar argument shows that any solution of \eqref{weaku} satisfies~\eqref{eq:hjb_a_priori_bound} due to the growth condition \eqref{F1} on the coupling term $F$.

	Uniqueness of solutions to \eqref{weaku} is a consequence of the Weak Maximum Principle and the existence of measurable selections of the maximizing set $\Lambda$. Indeed, if $u_1$ and $u_2$ are both solutions to \eqref{weaku}, then there exist measurable selections~$\alpha_i \in \Lambda[u_i]$ for $i\in\{1,2\}$. Then, Lemma~\ref{convv} implies that $b(x,\alpha_1(x)){\cdot}\nabla(u_1 - u_2) \geq H(x,\nabla u_1(x))-H(x,\nabla u_2(x))$ a.e.\ in $\Omega$ for $i,\,j \in\{1,2\}$. Therefore we find that
	\[
	\int_\Omega \nu \nabla(u_1-u_2)\cdot\nabla \psi +b(x,\alpha_1(x)){\cdot}\nabla (u_1-u_2) \psi +\kappa (u_1-u_2)\psi \mathrm{d}x \geq 0
	\]
	for all test functions~$\psi\in H^1_0(\Omega)$ that are nonnegative a.e.\ in $\Omega$.
	Since $u_1-u_2\in H_0^1(\Omega)$, the Weak Maximum Principle of \cite[Theorem 8.1]{gilbarg2015elliptic} then implies that $u_1 \geq u_2$ a.e.\ in $\Omega$.
	By symmetry, we also find that $u_2\geq u_1$ a.e.\ in $\Omega$, thus showing that $u_1=u_2$.

	We now prove the continuous dependence of the solutions on the data. 
	Suppose that we are given a sequence $\left\{m_j\right\}_{j=0}^{\infty}\subset L^2(\Omega)$, with corresponding solutions $\{u_j\}_{j\in\mathbb{N}}$ of~\eqref{weaku}, and suppose that $m_j\to m$ in $L^2(\Omega)$. Let $u\in H^1_0(\Omega)$ denote the corresponding unique solution of~\eqref{weaku} with datum $m$.
	To show convergence of the whole sequence $u_j \to u$, it is enough to show that every subsequence of $\{u_j\}_{j\in\mathbb{N}}$ has a further subsequence that converges strongly to $u$ in $H^1_0(\Omega)$. Let $\{u_{j_k}\}_{k\in\mathbb{N}}$ be an arbitrary subsequence. 
	The a priori bound~\eqref{eq:hjb_a_priori_bound} and the strong convergence of the sequence $\{m_j\}_{j\in\mathbb{N}}$ imply that the sequence $\{u_{j_k}\}_{k\in\mathbb{N}}$ is uniformly bounded in $H_0^1(\Omega)$. 
	Consequently, there exists a subsequence, (to which we pass without change of notation), such that $u_{j_k}\rightharpoonup v$ in $H_0^1(\Omega)$ and $u_{j_k}\to v$ in $L^2(\Omega)$ as $k\to \infty$, for some $v\in H^1_0(\Omega)$. 
	It is easy to see that the sequence $\{h_k\}_{k\in\mathbb{N}}$ defined by $h_{k}:=-H(\nabla u_{j_k})-\kappa u_{j_{k}}$ for $k\geq 0$ is uniformly bounded in $L^2(\Omega)$.
	Passing to a further subsequence without change of notation, we have $h_k \rightharpoonup h$ for some $h\in L^2(\Omega)$.
	Passing to the limit in~\eqref{weaku} with $m_{j_k}\to m$ and $u_{j_k}\rightharpoonup v$, we find that 
	\begin{equation}\label{eq:hjb_v_limit}
		\int_{\Omega}\nu\nabla v\cdot\nabla \psi\mathrm{d}x=\int_{\Omega}h\psi\mathrm{d}x+\langle F\left[m\right],\psi \rangle_{H^{-1}\times H_0^1}\text{ }\text{ }\forall \psi\in H_0^1(\Omega).
	\end{equation}
	Consequently, using weak-times-strong convergence, we find that
	\begin{multline}
		\lim_{k\to \infty}\nu\left\|\nabla \left(v-u_{j_{k}}\right)\right\|_{\Omega}^2
		\\ = \lim_{k\to \infty} \left[\int_{\Omega}\left(h-h_{k}\right)\left(v-u_{j_{k}}\right)\mathrm{d}x+ \langle F[m]-F\left[m_{j_{k}}\right],v-u_{j_{k}}\rangle_{H^{-1}\times H_0^1}\right]
		=0.
	\end{multline}
	Thus $u_{j_{k}}\to v$ in $H_0^1(\Omega)$ as $k\to \infty$. 
	Lipschitz continuity of $H$ and \eqref{eq:hjb_v_limit} then imply that $v$ is a solution of~\eqref{weaku} with datum $m$.
	Thus uniqueness of the solution of~\eqref{weaku} shown above implies that $v=u$ in $H_0^1(\Omega)$.
	As explained above, this shows that the whole sequence $\{u_j\}_{j\in\mathbb{N}}$ converges to $u$ in $H^1_0(\Omega)$.
\end{proof}
\medskip

\printbibliography

\end{document}